\newcommand{\A}{\mathcal{A}}    
\newcommand{\B}{\mathscr{B}}    
\newcommand{\BB}{\mathbb{B}}    
\newcommand{\C}{\mathbf{C}}    
\newcommand{\CK}{{\C}_{\mathcal{K}}} 
\newcommand{\e}{\varepsilon}   
\newcommand{\E}{\mathcal{E}}    
\newcommand{\EE}{\mathbb{E}}     
\newcommand{\F}{\mathcal{F}}    
\newcommand{\G}{\mathcal{G}} 
\newcommand{\GO}{G_{{\hspace{-0.03cm}}_\O}} 	
\newcommand{\gO}{g_{{\hspace{-0.01cm}}_\O}\!} 	
\newcommand{\GbO}{\GO^{{}^{(\beta)}}\!} 			
\newcommand{\GbOn}{G_{{\hspace{-0.03cm}}_{\O_n}}^{{}^{(\beta)}}}  			
\newcommand{\gbO}{\, g^\beta_{{\hspace{-0.01cm}}_\O}}  					
\newcommand{\g}{\mathfrak{g}}
\newcommand{\ind}{\mathbbm{1}} 
\newcommand{\II}{\mathbb{I}}   
\newcommand{\K}{\mathscr{K}} 
\newcommand{\NN}{\mathbb{N}}
\renewcommand{\O}{\mathcal{O}}    %
\renewcommand{\P}{\overline{P}}
\newcommand{\PS}{\mathscr{P}}
\newcommand{\PP}{\mathbb{P}}    
\newcommand{\R}{\mathcal{R}}
\newcommand{\RR}{\mathbb{R}}    
\renewcommand{\S}{\mathcal{S}}
\newcommand{\Ss}{\mathscr{S}}
\newcommand{\support}{\mathop{\rm supp}}
\newcommand{\supp}[1]{\mathrm{supp}\rpar{#1}}
\newcommand{\TO}{{T_\O}} 
\newcommand{\U}{\mathscr{U}}
\newcommand{\VS}{\mathscr{V}}
\newcommand{\XX}{X^{{}^{{}_\clubsuit}}\!\!}
\newcommand{\XXt}{X_t^{{}^{{}_\clubsuit}}}
\newcommand{\Z}{\ZZ^{d,n}}
\newcommand{\Zz}{\ZZ^{2,n}}
\newcommand{\ZZ}{\mathbb{Z}}
\providecommand{\rpar}[1]{\left( #1 \right)}               
\numberwithin{equation}{section}
\theoremstyle{plain}
\newtheorem{theorem}{Theorem}[section]
\newtheorem{lemma}[theorem]{Lemma}
\newtheorem{proposition}[theorem]{Proposition}
\newtheorem{corollary}[theorem]{Corollary}
\newtheorem*{remark}{Remark}
\newtheorem*{example}{Example}
\theoremstyle{definition}
\newtheorem{definition}{Definition}[section]
\begin{document}

\title{Powers of Brownian Green Potentials}

\author{Claude Dellacherie\thanks{Laboratoire Rapha\"el Salem,
UMR 6085, Universit\'e de Rouen, Site du Madrillet, 76801 Saint \'Etienne 
du Rouvray Cedex, France. email: Claude.Dellacherie@univ-rouen.fr} ,
Mauricio Duarte\thanks{Departamento de Matematica, Universidad Andres Bello, 
Republica 220, Piso 2, Santiago, Chile. \break email: mauricio.duarte@unab.cl} ,
Servet Mart\'inez\thanks{CMM-DIM;  Universidad de Chile; UMI-CNRS 2807; 
Casilla 170-3 Correo 3 Santiago; Chile. \break email: smartine@dim.uchile.cl}\and \vspace{-0.5cm}
Jaime San Mart\'in\thanks{CMM-DIM;  Universidad de Chile; UMI-CNRS 2807; 
Casilla 170-3 Correo 3 Santiago; Chile. \break email: jsanmart@dim.uchile.cl} ,
Pierre Vandaele\thanks{CMM-DIM;  Universidad de Chile; UMI-CNRS 2807; 
Casilla 170-3 Correo 3 Santiago; Chile. \break email: pierre-rene-gre.vandaele@ac-lyon.fr}}

\maketitle

\begin{abstract}  In this article we study stability properties of $\gO$, the standard Green kernel
for $\O$ an open regular set in $\RR^d$. In $d\ge 3$ we show that $\gbO$ is again a Green kernel of a 
Markov Feller process, for any power $\beta\in [1,d/(d-2))$. In dimension $d=2$, if $\O$ is an open 
Greenian regular set, we show the same result for $\gbO$, for any $\beta\ge 1$ and for the kernel 
$\exp(\alpha \gO)$, when $\alpha \in (0,2\pi)$.
\end{abstract}

\noindent\emph{\bf Key words:} Green potentials, Markov Processes, Brownian Motion.
\medskip

\noindent\emph{\bf MSC2010:} 60J45, 60J65, 60J25

\section{Introduction and Main Results}

In this paper we study powers, in the sense of Hadamard, of $\GO$ the standard Green potential associated to
Brownian Motion (BM) on a regular open set $\O\subset \RR^d$, killed when exiting $\O$. These operators
have a kernel which are powers of the standard Green kernel in $\O$.

Most of the time, we will assume that $d\ge 3$, and some extensions will be given for $d=2$. So, unless
we say the contrary, $d$ will be greater than or equal to 3. 
In what follows we denote by 
$G=G_d, g=g_d$ the Green potential and kernel for standard BM in $\RR^d$, that is, for $x\neq y \in \RR^d$ 
$$
g(x,y)=g(0,x-y)=C(d)\|x-y\|^{2-d},
$$
where $C(d)=\frac{\Gamma(d/2-1)}{2\pi^{d/2}}$, and for any $F\in \CK$ we have
$$
GF(x)=\int F(y) g(x,y) \, dy.
$$
We recall that $GF(x)=\EE_x(\int_0^\infty F(B_t) \, dt)$, where $(B_t)$ is a d-dimensional BM. 

If $\O$ is a regular (for BM) open set and $\TO=\inf\{t>0: B_t \in \O^c\}$ is the exiting time for $B$, 
then for $F\in \CK(\O)$
$$
\GO F(x)=\EE_x\left(\int_0^{\TO} F(B_t) \, dt\right)=\EE_x\left(\int_0^\infty F(B_{t\wedge \TO)} \, dt\right).
$$
We denote by $\gO(x,y)$ the density of $\GO$ with respect to Lebesgue measure, which 
for $x\neq y \in \O$ is given by
$$
\gO(x,y)=g(x,y)-\EE_x(g(B_{\TO},y)).
$$
For a parameter $\beta \in \RR_+$ we denote by $\gbO$ the $\beta$-power of $\gO$, that is,
$\gbO(x,y)=(\gO(x,y))^\beta$ and the corresponding operator $\GbO$ defined as 
$$
\GbO F(x)=\int F(y) \gbO(x,y) \, dy.
$$
In what follows we denote by $\hat\O$ the one point compactification of $\O$, and 
given a function $f$ defined on $\O$, we extend it to $\hat \O$ 
by putting $f(\partial)=0$, unless we say the contrary. 
Now, we can state our main results.

\begin{theorem} 
\label{the:1}
Let $\O$ be a regular open set in $\RR^d$, with $d\ge 3$, and $\beta\in \big[1,\frac{d}{d-2}\big)$.
Then, the operator $\GbO$ is the Green potential of a unique Feller Semigroup in $\BB_b(\hat\O)$. 
That is, there exists a unique (in law) Feller process
$(X_t)_t$, with c\`adl\`ag paths on $\hat\O=\O\cup\{\partial\}$, such that for 
any $F\in \C_b(\O)\cap L^1(\O,dx)$ and all $x\in \O$
\begin{equation}
\label{eq:00}
\GbO(F)(x)=\EE_x\left(\int_0^\infty F(X_{t})\, dt\right).
\end{equation}
\end{theorem}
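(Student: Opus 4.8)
The plan is to construct the Feller semigroup via the Hunt--Meyer theory of subordinate resolvents, exploiting the fact that $\gbO$ should be, up to constants, a Green kernel obtained by \emph{subordination in time}. First I would recall that $\gO$ is the density of a symmetric Green operator $\GO$ associated to Brownian motion killed on leaving $\O$, hence $\GO$ is itself the $0$-resolvent of a transient Borel right process; its potential kernel is excessive in each variable. The key analytic input is an integral representation of the power: for $\beta\in(1,d/(d-2))$ one writes $t^{-\beta}$ (the profile of $g$ in $\RR^d$) via the Gamma integral $t^{-\beta}=\frac{1}{\Gamma(\beta)}\int_0^\infty s^{\beta-1}e^{-st}\,ds$, which suggests that $\gbO(x,y)$ equals, up to the normalizing constant, $\int_0^\infty p_s^\O(x,y)\,\mu(ds)$ where $p_s^\O$ is the transition density of killed BM and $\mu$ is a suitable measure on $(0,\infty)$ — i.e. $\GbO$ is the Green operator of a \emph{subordinate} killed Brownian motion. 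The constraint $\beta<d/(d-2)$ is exactly what is needed for the resulting measure on subordinators to have finite mass near $0$ against the heat kernel, equivalently for $\GbO\mathbf 1$ to be locally finite and for the subordinator's Lévy measure / drift to be admissible; the lower bound $\beta\ge 1$ guarantees complete monotonicity so that $\gbO$ remains a genuine potential kernel (a Bochner subordinate of an $L^1$-contraction resolvent).

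The steps I would carry out, in order. (1) Verify that $\gbO(x,\cdot)\in L^1(\O,dx)$ for $\beta$ in the stated range, using the local integrability of $\|x-y\|^{(2-d)\beta}$ (which needs $(d-2)\beta<d$) near the diagonal and the boundedness of $\gO$ away from the diagonal together with decay/boundedness estimates coming from $\gO\le g$; this shows $\GbO$ maps $\C_b(\O)\cap L^1$ into finite functions and in fact into $\C_0(\hat\O)$ after the compactification, by dominated convergence and the continuity of $\gO$ off the diagonal plus an equicontinuity argument near $\partial$. (2) Establish that $(\GbO)$ arises as the $0$-potential of a sub-Markovian resolvent $(V_\lambda)_{\lambda>0}$: define $V_\lambda$ by subordinating the killed-BM semigroup with the convolution semigroup whose potential measure is $\mu$, check the resolvent equation and the sub-Markov property $\lambda V_\lambda \mathbf 1\le 1$, and identify $V_0=\GbO$. (3) Prove the Feller property: $V_\lambda(\C_0(\hat\O))\subseteq \C_0(\hat\O)$ and $\lambda V_\lambda f\to f$ uniformly as $\lambda\to\infty$ for $f\in\C_0(\hat\O)$ — here the point $\partial$ is handled because $\gbO(x,y)\to 0$ as $x\to\partial$ (inherited from $\gO$, since $\O$ is regular) and because the subordinator does not create mass at infinity. (4) Invoke the Hille--Yosida / Feller construction to get the semigroup $(P_t)$ and then the standard existence theorem for Feller processes (a càdlàg Markov process $(X_t)$ on $\hat\O$ with $\partial$ absorbing) realizing it, and finally read off \eqref{eq:00} from $V_0=\GbO$ together with the monotone-class extension from $\C_b(\O)\cap L^1$. (5) Uniqueness in law follows because the finite-dimensional distributions are determined by the semigroup, which in turn is determined by its resolvent $\GbO=V_0$ and the resolvent equation, by a standard argument.

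The main obstacle I expect is Step (2)–(3): showing that $\gbO$ is \emph{exactly} (not merely comparable to) the Green kernel of a nice Markov process, i.e. producing the convolution semigroup and verifying the full sub-Markov resolvent identity rather than just the potential-kernel inequality. Two subtleties arise. First, the naive subordination identity $g^\beta=c\int_0^\infty p_s\,\mu(ds)$ is literally true for the \emph{free} kernel $g$ on $\RR^d$ but only \emph{approximately} true for $\gO$, because $(\gO)^\beta\ne$ (subordinate of $p_s^\O$) in general; one must instead argue that $\gbO$ is a potential kernel by an abstract criterion — e.g. show the \emph{bounded} function $\gbO(x,\cdot)\wedge n$ is excessive for a suitable resolvent built intrinsically from $\GbO$ via the equilibrium/Revuz machinery, or verify Hunt's hypothesis (H) directly: $\GbO$ satisfies the complete maximum principle. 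Proving the complete maximum principle for $\gbO$ is where the condition $\beta\ge 1$ (subharmonicity/convexity of $t\mapsto t^\beta$ composed with the excessive function $\gO$) does the real work, and this is the technical heart. Second, the Feller regularity at $\partial$ requires a quantitative boundary estimate $\sup_{y}\gO(x,y)\to0$ as $x\to\partial$ uniform on compacts of $\O$ in the $y$-variable, which I would derive from the probabilistic formula $\gO(x,y)=g(x,y)-\EE_x g(B_{\TO},y)$ and regularity of $\partial\O$; once that is in hand, the passage to $\C_0(\hat\O)$ and the Feller property are routine.
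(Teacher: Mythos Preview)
Your overall architecture --- show that $\GbO$ satisfies the Complete Maximum Principle, build a sub-Markovian resolvent from it, and then produce a Feller process --- coincides with the paper's. The integrability estimates in your Step~(1) and the boundary-regularity argument for $\GbO(\BB_b)\subset\C_0(\O)$ are essentially what the paper does as well. But the heart of the matter, as you yourself flag, is establishing the CMP for $\gbO$ on a general regular $\O$, and here your proposal does not contain a working argument.

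The subordination route you sketch first is a red herring outside of $\O=\RR^d$: you correctly note that $(\gO)^\beta$ is \emph{not} the Green kernel of a subordinate killed Brownian motion (that would give $g^\beta(x,y)-\EE_x[g^\beta(X_{T_\O},y)]$ for the subordinate process $X$, a different object the paper explicitly distinguishes in the remark after the theorem). Your fallback --- ``convexity of $t\mapsto t^\beta$ composed with the excessive function $\gO$'' --- is not a proof of CMP; convex functions of excessive functions are not in general excessive, nor does convexity alone yield the complete maximum principle for the integral operator with kernel $\gbO$. This is the genuine gap.

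The paper's mechanism is entirely different and is where the content lies. It discretizes: one approximates $\O$ by lattice sets, considers the Green matrix $\g_E$ of the simple random walk killed on exit, and invokes a matrix theorem (Hadamard powers $U^{(\beta)}$ of symmetric potential matrices are again potential matrices, for all $\beta\ge 1$) together with a new inequality characterization of potential matrices, namely $\langle(Uv-\mathbf 1)^+,v\rangle\ge 0$ for all $v$. Passing to the limit in this inequality along the discretization yields $\int_\O(\GbO F-1)^+F\,dx\ge 0$ for $F\in\C_{\mathcal K}(\O)$, which is exactly CMP. A second nontrivial step you do not mention is that the resolvent so obtained is a priori only a \emph{Ray} resolvent; the paper must separately prove that the set of branching points is empty (via a pointwise comparison of $P_0(\GbO\ind_{B(x,\e)})(x)$ against $\GbO\ind_{B(x,\e)}(x)$ as $\e\downarrow 0$) before concluding the process is Feller. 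Your Step~(4) glosses over this.
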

\begin{remark}  
In case $\O$ is bounded, of course the condition $F\in \C_b(\O)\cap L^1(\O,dx)$ is just
$F\in \C_b(\O)$. 
We also note that there is a big difference between $\GbO$ and the operator $H$ whose kernel is given
by 
$$
h(x,y)=g^\beta(x,y)-\EE_x(g^\beta(X_{\TO},y)),
$$
where $X$ is the Feller process with Green kernel $g^\beta$. The fact that $H$ is a Green potential is part
of a general result, and it is the Green potential for the killed process: $Y_t=X_t, t< \TO$.
\end{remark}
In dimension $d=2$, we show the following.
\begin{theorem} 
\label{the:2}
Let $\O$ be a regular Greenian open set in $\RR^2$.
\begin{enumerate}
\item   If $\beta\in \big[1,\infty)$, then the operator $\GbO$ is the Green potential of a 
unique Feller Semigroup. 

\item If $\alpha\in (0,2\pi)$, then the operator $G^{\,exp, \alpha}$ defined in $\C_\K(\O)$ as
$$
G^{\,exp, \alpha}(F)(x)=\int_\O F(y) \exp(\alpha \gO(x,y))\ dy
$$
is the Green potential of a unique Feller Semigroup.
\end{enumerate}
\end{theorem}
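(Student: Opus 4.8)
The plan is to deduce both parts of Theorem~\ref{the:2} from the same characterization used to prove Theorem~\ref{the:1}: a proper kernel $V$ on $\O$ that satisfies the complete maximum principle together with the appropriate Feller-type regularity (it carries $\C_\K(\O)$ into continuous functions and has the needed absolute-continuity / strong-Feller property) is the Green potential of a unique Feller semigroup on $\BB_b(\hat\O)$. The point is that the hypothesis $d\ge 3$ entered the proof of Theorem~\ref{the:1} only through two \emph{local} facts about the diagonal singularity $\gO(x,y)\asymp\|x-y\|^{2-d}$: that $\gbO(x,\cdot)\in L^1_{loc}$, which is exactly the constraint $\beta<d/(d-2)$; and the comparison of $g^{\beta}$ with the Riesz kernel of a symmetric stable process of index $d-(d-2)\beta\in(0,2]$, used to transport the complete maximum principle. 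In dimension $2$, with $\O$ Greenian and regular, one has $\gO(x,y)=\frac1{2\pi}\log\frac1{\|x-y\|}+h(x,y)$ with $h$ locally bounded; hence $\gbO(x,\cdot)$ is locally integrable for \emph{every} $\beta\ge 1$ (logarithmic singularities are integrable to all powers), which is why the upper bound on $\beta$ disappears, and $\exp(\alpha\gO(x,y))\asymp\|x-y\|^{-\alpha/(2\pi)}$ is locally integrable for all $\alpha<4\pi$, while the sharper range $\alpha\in(0,2\pi)$ is precisely what keeps the diagonal exponent below $1$, i.e.\ keeps the comparison stable index $2-\alpha/(2\pi)$ inside $(1,2)$, which is the regime in which the comparison argument of Theorem~\ref{the:1} can be run.

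First I would reduce to bounded $\O$. Exhaust $\O$ by bounded regular open sets $\O_n\uparrow\O$ with $\overline{\O_n}\subset\O_{n+1}$; by domain monotonicity of Green functions $g_{\O_n}\uparrow\gO$, so $g_{\O_n}^{\beta}\uparrow\gbO$ and $\exp(\alpha g_{\O_n})\uparrow\exp(\alpha\gO)$ pointwise on $\O\times\O$. Since $\O$ is Greenian these limit kernels are finite off the diagonal, and, applied to $F\in\C_\K(\O)$, give finite continuous functions, so they are proper kernels. One then invokes the stability of the above characterization under increasing limits of proper kernels (a statement already produced while proving Theorem~\ref{the:1}): the complete maximum principle and the associated Feller resolvent, together with uniqueness in law, pass to the limit on $\hat\O$. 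It therefore suffices to treat bounded $\O\subset\RR^2$, for which the Greenian hypothesis is automatic.

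For bounded $\O$ I would verify the characterization directly. Properness and continuity of $x\mapsto\GbO F(x)$ and $x\mapsto G^{\,exp, \alpha}F(x)$ for $F\in\C_b(\O)$ follow from the local integrability above, boundedness of $\O$, and continuity of $\gO$ off the diagonal; regularity of $\partial\O$ gives $\gO(x,y)\to0$ as $x\to\partial\O$, so $\GbO F$ extends continuously to $\hat\O$ by $0$ at $\partial$, whereas $G^{\,exp, \alpha}F$ extends continuously with value $\int_\O F$ at $\partial$, which is harmless for the Feller property (it merely reflects that the process attached to the exponential kernel has a long lifetime near $\partial\O$). The complete maximum principle is then transported from $\gO$, which satisfies it as a genuine Green kernel: for $\gbO$ with $\beta\ge 1$, exactly as in Theorem~\ref{the:1} — the logarithmic singularity causing no new obstruction, and the classical $3G$-inequality for $\gO$ surviving the power because $(a+b)^{\beta}\le 2^{\beta-1}(a^{\beta}+b^{\beta})$; for $\exp(\alpha\gO)$ with $\alpha\in(0,2\pi)$, by writing it, near the diagonal, as a controlled multiplicative perturbation of the Green kernel of the killed symmetric $(2-\alpha/(2\pi))$-stable process in $\O$ and checking that the perturbation preserves the principle — this is the step that uses $2-\alpha/(2\pi)>1$. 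Feeding all of this into the characterization yields the Feller semigroup and its uniqueness.

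I expect the main obstacle to be this last step for the exponential kernel: establishing the complete maximum principle for $\exp(\alpha\gO)$ and, in particular, showing that $\alpha=2\pi$ — not the larger value $4\pi$ coming from mere integrability — is the true threshold. This is the two-dimensional analogue of the sharp exponent $d/(d-2)$ in Theorem~\ref{the:1}, and the delicate point is to keep exact track of which range of the exponent survives the comparison with the stable Green kernel and the ensuing perturbation. The exhaustion step hides a second, milder subtlety, namely checking that the increasing limit of Feller resolvents is again Feller up to the adjoined point $\partial$.
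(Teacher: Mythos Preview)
Your proposal has a genuine gap in the mechanism for the complete maximum principle. You assert that the CMP for $\gbO$ in Theorem~\ref{the:1} was obtained by ``comparison of $g^\beta$ with the Riesz kernel of a symmetric stable process,'' and that a 3G-type inequality together with $(a+b)^\beta\le 2^{\beta-1}(a^\beta+b^\beta)$ transports it. That is not how the paper proceeds, and neither ingredient delivers CMP: a 3G bound controls products $g(x,y)g(y,z)$, not the positivity of $\int(\GbO F-1)^+F$, and CMP is not stable under bounded multiplicative perturbations of a kernel. The paper's route is discrete. One approximates $\gO$ by the Green kernel of the simple random walk restricted to a finite lattice set, so that $\gbO$ (respectively $\exp(\alpha\gO)$) is approximated by a Hadamard power (respectively Hadamard exponential) of a finite symmetric potential matrix. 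Proposition~\ref{pro:4} says these Hadamard functions are again potential matrices, and Proposition~\ref{pro:3} converts this into the scalar inequality $\langle(Uv-\ind)^+,v\rangle\ge0$, which is robust enough to survive the passage to the limit (this is the content of Corollary~\ref{cor:1} and Proposition~\ref{pro:2.2} in $d\ge3$, and of the lemma in Section~\ref{sec:4} in $d=2$). The Riesz/stable interpretation in the introduction is a remark about the case $\O=\RR^d$, not a proof device.

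For the exponential kernel the same issue bites harder: your ``controlled multiplicative perturbation of the killed $(2-\alpha/(2\pi))$-stable Green kernel'' is vague, and there is no general principle saying CMP passes through such perturbations. In the paper the threshold $\alpha<2\pi$ is not a stability-index constraint but pure integrability: with the normalization used here $\gO(x,y)\sim\frac1\pi\log\frac1{\|x-y\|}$ (not $\frac1{2\pi}$), so $\exp(\alpha\gO(x,y))\asymp\|x-y\|^{-\alpha/\pi}$ near the diagonal, and $\alpha<2\pi$ is exactly what makes this locally $L^1$ in $\RR^2$ and hence makes the domination in the discrete-to-continuous limit work. Your exhaustion to bounded $\O$ and the subsequent Feller/Ray bookkeeping (no branching points, continuity of the resolvent) are in the right spirit and do match Sections~\ref{sec:2}--\ref{sec:3}, but without a valid argument for CMP the program does not go through.
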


The general question is when a function of $\gO$ is again the Green potential
of a Markov process. The above theorems show this happens for powers. In dimension
$d\ge 3$, there is a restriction on these powers, due to integrability conditions. In dimension
$d=2$, we even have that some exponentials of $\gO$ are Green potentials. As we will see, these results
are consequence of similar results shown in \cite{dell2009} (see also \cite{dell2011} and \cite{libroDMSM2014}) for 
potential matrices (potentials of transient finite Markov chains). In addition to powers and exponentials, we know that
$F(a)=a+a^2$ and $F(a)=e^a-1$ preserve potential matrices and so this can be
transfered to the setting of this article. The general problem is open even in the case of matrices. The authors of
\cite{libroDMSM2014} conjecture that this is true for any function, 
which is the Laplace transform of a positive measure, that is, 
$F(a)=\int_0^\infty e^{\zeta a} \, d\mu(\zeta)$. In particular, this should be true for any absolutely monotone function.
We mention here that using the results about potential matrices, in \cite{Eisenbaum2014} it was extended this
stability under powers in the context of continuous (bounded) Green kernels.

\medskip

We mention here the results of this article can be extended to potentials of other transient 
Diffusions $X$, or even more general transient Markov processes, taking values on a bounded open set of $\RR^d$. 
One way to achieve such an extension, is to use an approximation of $X$ by finite Markov Chains with enough
control on the respective approximated potentials.

\medskip

The paper is organized as follows. In Section \ref{sec:1} we consider $\O=\RR^d$, for which the
result is well known. In particular, Theorem \ref{the:1} was proved essentially by Frostman in \cite{frostman1935}
and Riesz in \cite{riesz1937,riesz1951}. The operator $G^{{}^{(\beta)}}\!\!=G_{{\RR^d}}^{{}^{(\beta)}}$ 
is proportional to what is called a Riesz potential, which 
corresponds to the Green potential of a standard Brownian Motion subordinated to 
a $\frac{\alpha}{2}$-stable process where $\alpha=d-\beta(d-2)\in (0,2]$ (see for example \cite{hansen2008}), that is,
for all $F\in \CK, x\in \RR^d$
$$
\begin{array}{ll}
G^{{}^{(\beta)}}(F)(x)\hspace{-0.3cm}&=D\, \EE_x\left(\int_0^\infty F(B_{\eta_t}) dt\right)\\
\\
\hspace{-0.3cm}&=D\frac{\Gamma(\frac{d-\alpha}{2})} {\Gamma(\alpha/2) 2^{\alpha/2}\pi^{d/2}} 
\int_{\RR^d} F(y) \|x-y\|^{\alpha-d}\, dy,
\end{array}
$$
where $(B_t)_t$ is a standard $d$-dimensional BM, $(\eta_t)_t$ is a $\frac{\alpha}{2}$-stable subordinator, normalized to 
$\EE(e^{-\lambda \eta_t})=e^{-t \lambda^{\alpha/2}}$, $(B_t)_t $ and  $(\eta_t)_t$ are independent, and 
$$
D=\left(\frac{\Gamma(\frac{d-2}{2})}{2 \pi^{d/2}}\right)^{\frac{d-\alpha}{d-2}}\Big/
\frac{\Gamma(\frac{d-\alpha}{2})}
{\Gamma(\alpha/2) 2^{\alpha/2}\pi^{d/2}}.
$$
Constant $D$ can be removed by a linear time change $t'=D t$. 

\begin{example} $d=3, \alpha=1, \beta=2$. The Green kernel is in this case $g^2_{\RR^3}(x,y)=C^2(3) \|x-y\|^{-2}$, 
for $x,y \in \RR^3$. The process whose potential is $g_3^2$ can be constructed as follows.
Take $W$ a standard one dimensional BM and consider the passage times, for $t\ge 0$ 
$$
\tau_t=\inf\{s\ge 0: W_s>t\}.
$$
The Laplace transform of $\tau_t$ is  $\EE(e^{-\lambda \tau_t})=e^{\sqrt{2}\, t \lambda^{1/2}}$ and
then $\eta_t=\tau_{t/\sqrt{2}}$ is a normalized  $\frac{1}{2}$-stable subordinator.
So, if we take an independent three dimensional BM $B$ and subordinate it
$$
X_t=B_{\tau_{t/(D\sqrt{2})}},
$$
we get a Feller Process, whose Green potential is $G_{\RR^3}^{(2)}$. Its Green kernel is proportional
to $\|x\|^{-2}$, which formally is the standard Green kernel in $\RR^4$ at the point $z=(x,0)$, and so it is proportional
to the density of the amount of time the 4 dimensional BM $(W,B)$ spends around $z$. We are not aware if there is 
a pathwise explanation of this interpretation.
\end{example}

The case of $\,\O=\RR^d$ is simpler
because $G^{{}^{(\beta)}}\!\!$ is a convolution operator and Fourier analysis
can be used to show the result. In Section \ref{sec:1}, we provide the basic estimations we need and we shall
prove directly that $G^{{}^{(\beta)}}\!\!$ satisfies a suitable version of the Complete Maximum Principle 
(CMP, see Definition \ref{def:CMP}) on $\CK$, which is
one of the main ingredients to prove that a positive operator is the potential of a Feller semigroup. The proof
of this CMP is based on a new inequality characterizing 
potential matrices (see Proposition \ref{pro:3} in the Appendix).

\medskip

In Section \ref{sec:2}, we extend the results, of the previous section, to a general regular bounded domain $\O$.
Following the tools developed in Section \ref{sec:1}, we shall prove that $\GbO$ is the potential of a Ray process.
Then, an extra argument is necessary to show that the set of branching points is empty, to conclude that
actually $\GbO$ is the potential of a Feller process. 

\medskip

In Section \ref{sec:3}, we treat the unbounded case, proving the general result in $d=3$.
In Section \ref{sec:4}, we indicate how to prove the case $d=2$. In section \ref{sec:density}, we prove that
these semigroups have a density with respect to Lebesgue measure. In the Appendix we summarize the 
tools we need from the theory of $M$-matrices and their inverses.

\medskip

The main questions in this article, have some relevance in applications. 
When using Markov chains (or more generaly Markov processes)
to model some phenomena, we usually fit the transition probability $P$ (or the infinitesimal generator).
That is, we put a model on $P$, which in general should be a nonnegative matrix, whose row sums are bounded
by one. Then, we impose other restrictions given by the particular problem. What if we cannot measure $P$ directly, 
but we can only measure $U=(\II-P)^{-1}$, the potential of the associated Markov chain? This happens, for example
in electrical networks. Then, one should give a model for $U$. This is complicated, because it is not simple
to describe which nonnegative matrices $U$ are potentials. This is part of what is known in Linear Algebra
as the inverse $M$-matrix problem. So, if we have a large class of functions that
leave invariant the set of potential matrices, we can model the problem by putting a parametric family in such class
of functions.

\medskip

We denote by $\C(\O)$ the set of continuous functions defined on $\O$, $\C_0(\O)$
the subset of continuous functions vanishing at $\infty$, $\CK(\O)$ the subset of continuous functions
with compact support contained in $\O$.
Notice that $\C(\hat \O)$ is naturally identified to $\C_0(\O)\oplus \ind$, where
the decomposition is $F=F-F(\partial)+F(\partial)$ and $F-F(\partial)\in \C_0(\O)$. 
Given a function $f$ defined on $\O$ we extend it to $\hat\O$ by putting $\overline f(\delta)=0$.
We remark that $F\in \C_0(\O)$ iff $\overline F \in \C(\hat\O)$. 
If $\O$ is bounded, then
$F\in \C_0(\O)$ iff $F\in \C(\O)$ and for all $(x_n)_n\subset \O$ such that $x_n\to x\in \partial \O$, then
$F(x_n)\to 0$. If $\O$ is unbounded, we need to prove also that for all $(x_n)_n\subset \O$,
such that $\|x_n\|\to \infty$ then $F(x_n)\to 0$. We denote by $\BB_b(\O)$ the set of bounded 
measurable functions and $\C_b(\O)$ the set of continuous bounded functions. Finally, in 
sums and integrals a restriction of the form $A\cap p(z)$, where $z$ is the variable
of integration and $p$ is a functional proposition, it is understood as usual as $A\cap\{z: p(z) \hbox{ is true}\}$.

\section{Powers of the Green potential in $\RR^d, d\ge 3$}
\label{sec:1}

In this section we consider $\O=\RR^d$.
In what follows we denote by $\g=\g_d$ the Green kernel for the simple random walk in 
$\ZZ^d$. We recall that $G$ is the Green potential associated to a BM in $\RR^d$, whose density
with respect to Lebesgue measure is
$$
g(x,y)=g(0,x-y)=C(d)\|x-y\|^{2-d},\quad \hbox{ for } x\neq y \in \RR^d,
$$
where $C(d)=\frac{\Gamma(d/2-1)}{2\pi^{d/2}}$.
The following proposition summarizes some well known relations between $\g$ and $g$
(see \cite{Lawler2010} Theorem 4.3.1).

\begin{lemma} 
\label{lem:1}
$ $
\begin{itemize}
\item[(i)]  The Green function $\g$ is bounded and moreover $\g(0,x)\le \g(0,0)<\infty$ for all $x\in \ZZ^d$.

\item[(ii)] $\g$ has the following decay, for $x\in \ZZ^d, x\neq 0$
$$
\g(0,x)=dC(d)\|x\|^{2-d}+ \mathcal{O}(\|x\|^{-d})=
d\,g(0,x)+\mathcal{O}(\|x\|^{-d}).
$$
In particular, there exists a constant $c_0=c_0(d)$, such that, for all $x\in \ZZ^d$
$$
\g(0,x)\le c_0\|x\|^{2-d}.
$$
\end{itemize}
\end{lemma}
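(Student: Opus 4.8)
The plan is to derive both statements from classical facts about the simple random walk; everything here is contained in \cite{Lawler2010}, Theorem 4.3.1, and I would only need to recall the argument. Write $p_n$ for the $n$-step transition kernel of the walk, so that $\g(x,y)=\sum_{n\ge 0}p_n(x,y)$, and recall $\g(x,y)=\g(0,y-x)$ by translation invariance.

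For (i), I would first note that $\g(0,0)=\sum_n p_n(0,0)$ is the expected number of visits to the origin, which equals $1/\PP_0(\hbox{no return to }0)$ because the number of returns is geometrically distributed; this is finite precisely because the walk is transient for $d\ge 3$ (equivalently, the local central limit theorem gives $p_{2n}(0,0)\sim c\,n^{-d/2}$, which is summable for $d\ge 3$). Then, decomposing the visits to $x$ at the first hitting time $T_x$ of $x$, the strong Markov property yields $\g(0,x)=\PP_0(T_x<\infty)\,\g(x,x)=\PP_0(T_x<\infty)\,\g(0,0)\le\g(0,0)$, which is exactly (i) and gives the boundedness of $\g$.

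For (ii), I would start from the Fourier representation
$$
\g(0,x)=\frac{1}{(2\pi)^d}\int_{[-\pi,\pi]^d}\frac{e^{-i\,x\cdot\theta}}{1-\varphi(\theta)}\,d\theta,\qquad \varphi(\theta)=\frac1d\sum_{j=1}^d\cos\theta_j ,
$$
obtained by summing the geometric series $\sum_n\varphi(\theta)^n$ (justified by Abel summation and dominated convergence, using that $1-\varphi>0$ on $[-\pi,\pi]^d\setminus\{0\}$ and $1-\varphi(\theta)=\|\theta\|^2/(2d)+\mathcal{O}(\|\theta\|^4)$ near the origin, so $1/(1-\varphi)$ is integrable for $d\ge 3$). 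Since the integrand is smooth and bounded away from $\theta=0$, the decay in $x$ is governed by the singularity at the origin: replacing $1/(1-\varphi(\theta))$ by $2d/\|\theta\|^2$ and rescaling $\theta=\phi/\|x\|$, one recognizes the Fourier transform of the Riesz kernel and gets the main term $d\,C(d)\|x\|^{2-d}=d\,g(0,x)$. Alternatively --- and this is probably the cleanest bookkeeping --- I would sum the quantitative local central limit theorem $p_n(0,x)=2(\tfrac{d}{2\pi n})^{d/2}e^{-d\|x\|^2/(2n)}(1+\mathcal{O}(1/n))$, valid when $n$ has the same parity as $x_1+\cdots+x_d$, over $n$; comparing the sum with an integral (the admissible values of $n$ forming an arithmetic progression of common difference $2$, whence a factor $\tfrac12$) leads to $\int_0^\infty(\tfrac{d}{2\pi t})^{d/2}e^{-d\|x\|^2/(2t)}\,dt$, which the substitution $u=d\|x\|^2/(2t)$ together with $\int_0^\infty u^{d/2-2}e^{-u}\,du=\Gamma(d/2-1)$ evaluates to $d\,C(d)\|x\|^{2-d}$. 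The last claim, $\g(0,x)\le c_0\|x\|^{2-d}$, then follows by combining (ii) with (i): the ratio $\g(0,x)\|x\|^{d-2}$ tends to $d\,C(d)$ as $\|x\|\to\infty$ and is bounded on the finitely many remaining $x\neq 0$, hence is bounded on $\ZZ^d\setminus\{0\}$, while the inequality is vacuous at $x=0$.

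The main obstacle is the sharp error term $\mathcal{O}(\|x\|^{-d})$, rather than the mere asymptotic $\g(0,x)\sim d\,g(0,x)$. In the random-walk picture this requires the local central limit theorem with explicit and uniform control on the remainder, followed by an Euler--Maclaurin estimate of the error in passing from $\sum_n$ to $\int_0^\infty$; in the Fourier picture it requires a careful estimate of the remainder after subtracting the leading singularity, where one must extract cancellation from the oscillation of $e^{-i\,x\cdot\theta}$ away from $\theta=0$ (repeated integration by parts in $\theta$, with attention to the boundary terms on $\partial([-\pi,\pi]^d)$). Everything else is soft.
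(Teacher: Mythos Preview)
Your proposal is correct and in fact goes well beyond what the paper does: the paper gives no proof at all for this lemma, stating just before it that these are ``well known relations between $\g$ and $g$ (see \cite{Lawler2010} Theorem 4.3.1)''. You cite the same reference and additionally supply a coherent sketch of both the Fourier and the local-CLT routes to the asymptotic, with a correct identification of where the work lies (the sharp $\mathcal{O}(\|x\|^{-d})$ remainder).
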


Consider now $x\in \RR^d$ and the normalized simple random walk starting from $x$
$$
\S_{t,n}^x=x+\sqrt{\frac{d}{n}} \,\sum_{k=1}^{[nt]} \xi_k,
$$
where the random variables $(\xi_k:\, k\ge 1)$ are i.i.d. with common distribution
$$
\PP(\xi={\bf e})=\frac1{2d},
$$
for ${\bf e}\in \{\pm e_1,\cdots, \pm e_d\}$, with $\{e_1,\cdots,e_d\}$ the canonical basis for
$\RR^d$. We also denote by $\Ss_\ell=\sum_{k=1}^{\ell} \xi_k$, with $\Ss_0 \in \ZZ^d$. 

In what follows we will repeatedly use the following notation $\Z=\sqrt{d/n}\, \ZZ^d$. We also 
denote by  $B_\infty(x,r)=\left\{y\in \RR^d:\, \|x-y\|_\infty< r \right\}$, the
open ball in the $\infty$-norm of $\RR^d$, centered at $x$ with radius $r$. The corresponding
ball for the euclidian distance is denoted by $B(x,r)$. Similarly $d_\infty(y,A)$ is the
distance between $y$ and a set $A$, with respect to the infinite norm.

For a set
$A\subset \RR^d$ we denote by 
$$
A^\boxplus=\{y\in \RR^d:\, d_\infty(y,A)\le \sqrt{d}\}.
$$ 

Assume that $F\in\CK$ 
is a nonnegative function with support $\mathscr{K}=\support(F)$.
Then, for $x=0$, we have 
$$
\begin{array}{l}
\EE\left(\int_0^\infty F(\S_{t,n}) dt\right)=\sum\limits_{\ell\in \NN} \EE\left(\int\limits_{\ell/n}^{(\ell+1)/n} 
F\left(\sqrt{d/n}\,\Ss_\ell\right) dt\right)=
\sum\limits_{\ell\in \NN}\frac1n \EE\left(F\left(\sqrt{d/n}\,\Ss_\ell\right)\right)\\
\\
=\sum\limits_{z\in \ZZ^d} F(z\sqrt{d/n}) \frac1n \sum\limits_{\ell\in \NN} \PP(\Ss_\ell=z)=
\sum\limits_{z\in \ZZ^d} F(z\sqrt{d/n}) \frac1n \g(0,z)\\
\\
=\sum\limits_{w\in \Z} F(w) \frac1n \g(0,w\sqrt{n/d})=
\sum\limits_{w\in \Z} F(w) \left(\frac{d}{n}\right)^{d/2} \, \g(0,w\sqrt{n/d})\frac{n^{d/2-1}}{d^{d/2}}\\
\\
=\frac{F(0)\g(0,0)}{n}+\int_{\RR^d} H_n(y) dy,
\end{array}
$$
where $H_n$ is the simple function given by
$$
 H_n(y)=\sum\limits_{w \in \Z, w\neq 0} F(w) \g(0,w\sqrt{n/d})\frac{n^{d/2-1}}{d^{d/2}}\; 
 \ind_{B_\infty\big(w,\frac12\sqrt{d/n}\big)}(y).
$$
Lemma \ref{lem:1} gives the following bound, with $c_1=c_1(d)=c_0(d)/d$
$$
H_n(y)\le c_1 \|F\|_\infty \ind_{\mathscr{K}^\boxplus}(y) \sum\limits_{w \in \Z, w\neq 0} 
\|w\|^{2-d} \; \ind_{B_\infty\big(w,\frac12\sqrt{d/n}\big)}(y),
$$
Now, if $y\in B_\infty\big(w,\frac12\sqrt{d/n}\big)$ we have $\|y\|=\|y-w+w\|\le \|w\|+\frac{\sqrt{d}}{2}\sqrt{d/n}$. 
The fact that $w \in \Z, w\neq 0$
implies that $\|w\|\ge \sqrt{d/n}$ and so $\|y\|\le \left(1+\frac{\sqrt{d}}{2}\right)\|w\|$, which gives
for $c_2=c_2(d)=c_1(d)\left(1+\frac{\sqrt{d}}{2}\right)^{d-2}$
\begin{equation}
\label{eq:1}
H_n(y)\le c_2 \|F\|_\infty  \ind_{\mathscr{K}^\boxplus}(y) \|y\|^{2-d}\in L^1(dy).
\end{equation}
The asymptotic for $\g$ gives also the pointwise convergence, for all $y\neq 0$
$$
H_n(y)\to F(y) g(0,y).
$$
We conclude, that 
{\small
\begin{eqnarray}
&&\EE\left(\int_0^\infty F(\S_{t,n}) dt\right)=
\sum\limits_{w\in \Z} F(w) \left(\frac{d}{n}\right)^{d/2} \, 
\g(0,w\sqrt{n/d})\frac{n^{d/2-1}}{d^{d/2}}\nonumber\\
\label{eq:1.5}&&\\
&&\stackunder{\longrightarrow} {n\to \infty} \, \int F(y) g(0,y) dy=G(F)(0)=\EE_0\left(\int_0^\infty F(B_t) dt\right),
\nonumber
\end{eqnarray}
}a well known fact. These arguments have been included because they serve as a model for the general case
in Theorem \ref{the:1}.

\medskip

We denote by $\G^{n,(\beta)}$ the operator 
$$
\G^{n,(\beta)}(F)(x)=\sum\limits_{w\in \Z} F(w+x) \left(\frac{d}{n}\right)^{d/2} \, 
\left[\g(0,w\sqrt{n/d})\frac{n^{d/2-1}}{d^{d/2}}\right]^\beta,
$$
which is well defined for all $F\in \C_{\mathcal{K}}$ and all $x\in \RR^d$ because 
$\G^{n,(\beta)}(F)(x)$ contains, for every $x$, a finite number of terms.
Notice that if
$x\in \Z$ then
$$
\G^{n,(\beta)}(F)(x)=\sum\limits_{w\in \Z} F(w) \left(\frac{d}{n}\right)^{d/2} \, 
\left[\g\left(x\sqrt{n/d},w\sqrt{n/d}\right)\frac{n^{d/2-1}}{d^{d/2}}\right]^\beta,
$$
Recall that, we have defined $G^{(\beta)}$ 
as the operator acting in $\C_{\mathcal{K}}$ given by
$$
G^{(\beta)}(F) (x)=\int_{\RR^d} F(y+x) [g(0,y)]^\beta dy=\int_{\RR^d} F(y) [g(x,y)]^\beta dy.
$$
In what follows, we denote by $\hbox{osc}=\hbox{osc}_F$ the oscilation of $F$, which is given by 
$$
\hbox{osc}(\delta)=\sup\limits_{x,y:\, \|x-y\|_\infty\le \delta}\{|F(x)-F(y)|\}\le 2\|F\|_\infty
$$ 
for any $\delta>0$. The fact that $F$ is uniformly continuous on $\RR^d$ implies that
$\hbox{osc}(\delta)\to 0$ as $\delta\downarrow 0$. Using the ideas developed before, 
we prove the following important result.

\begin{proposition} 
\label{pro:1}
Assume that $1\le \beta<\frac{d}{d-2}$ and $F\in \C_{\mathcal{K}}$ with $\mathscr{K}=\supp F$. 
Given $x,y\in \RR^d$ such that $\|x-y\|_\infty<\frac12\sqrt{d}$.
Then,
\begin{equation}
\label{eq:5}
|\G^{n,(\beta)}(F)(x)-\G^{n,(\beta)}(F)(y)|\le \hbox{osc}(\|x-y\|_\infty) \Gamma(F),
\end{equation}
where
$\Gamma(F)=\Big(\g^\beta(0,0) d^{\frac{d}{2}(1-\beta)}+ c_3
\sup\limits_{x\in \RR^d}\int\limits_{\mathscr{K}^{2\boxplus}} \|x-y\|^{\beta(2-d)} \, dy\Big)<\infty$, 
$c_3=c_3(d,\beta)=\Big(\frac{c_0(d)}{d} (2+\sqrt{d})^{d-2}\Big)^\beta$ and 
$\mathscr{K}^{2\boxplus}=(\mathscr{K}^{\boxplus})^\boxplus$.

Similarly, it holds for all $y$
\begin{equation}
\label{eq:2}
|\G^{n,(\beta)}(F)(y)|\le 3\Gamma(F) \|F\|_\infty.
\end{equation}

Finally, we have the convergence: If $(z_n)_n$ is any sequence converging to $y$, then
$$
\lim\limits_{n\to \infty} \G^{n,(\beta)}(F)(z_n)=G^{(\beta)}(F) (y)=\int F(w) [g(y,w)]^\beta dw.
$$
\end{proposition}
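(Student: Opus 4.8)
The plan is to deduce all three assertions from the single representation
$$
\G^{n,(\beta)}(F)(x)=\sum_{w\in\Z}F(w+x)\,a_n(w),\qquad
a_n(w)=\left(\tfrac{d}{n}\right)^{\!d/2}\left[\g\big(0,w\sqrt{n/d}\big)\,\tfrac{n^{d/2-1}}{d^{d/2}}\right]^{\beta}\!,
$$
in which the nonnegative weights $a_n(w)$ do not depend on $x$, together with the observation that, since the cubes $B_\infty\big(w,\tfrac12\sqrt{d/n}\big)$, $w\in\Z$, tile $\RR^d$ up to a null set with common volume $(d/n)^{d/2}$, for any coefficients $c_w$ one has $\sum_{w\neq0}c_w a_n(w)=\int_{\RR^d}\big(\sum_{w\neq0}c_w\,[\g(0,w\sqrt{n/d})\tfrac{n^{d/2-1}}{d^{d/2}}]^{\beta}\ind_{B_\infty(w,\frac12\sqrt{d/n})}(z)\big)\,dz$; this is the $\beta$-power version of the Riemann-sum computation leading to \eqref{eq:1.5}. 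I would first record two elementary bounds from Lemma~\ref{lem:1}(ii): for $w\neq0$, $\g(0,w\sqrt{n/d})\tfrac{n^{d/2-1}}{d^{d/2}}\le\tfrac{c_0(d)}{d}\|w\|^{2-d}$, so $a_n(w)\le\big(\tfrac{c_0(d)}{d}\big)^{\beta}\|w\|^{\beta(2-d)}(d/n)^{d/2}$; and $a_n(0)=\g^{\beta}(0,0)\,d^{\frac d2(1-\beta)}\,n^{\frac12(\beta(d-2)-d)}$, whose power of $n$ is strictly negative precisely because $\beta<\tfrac{d}{d-2}$, hence $a_n(0)\le\g^{\beta}(0,0)d^{\frac d2(1-\beta)}$ for $n\ge1$ and $a_n(0)\to0$. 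This is the only place the restriction $\beta<d/(d-2)$ is used, and it is also what makes $\|\cdot\|^{\beta(2-d)}$ locally integrable, so that $\Gamma(F)<\infty$ (its first term being finite by Lemma~\ref{lem:1}(i), its second because $\mathscr K^{2\boxplus}$ is compact and $\beta(d-2)<d$).

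For \eqref{eq:5} I would write $\G^{n,(\beta)}(F)(x)-\G^{n,(\beta)}(F)(y)=\sum_w\big(F(w+x)-F(w+y)\big)a_n(w)$, bound each summand by $\mathrm{osc}(\|x-y\|_\infty)\,a_n(w)$, and use that the summand vanishes unless $w+x\in\mathscr K$ or $w+y\in\mathscr K$. The $w=0$ term is controlled by $\mathrm{osc}(\|x-y\|_\infty)\,\g^{\beta}(0,0)d^{\frac d2(1-\beta)}$. For the rest I pass to the integral representation: on a cube $B_\infty(w,\tfrac12\sqrt{d/n})$ with $w\neq0$ one has $\|w\|\ge\sqrt{d/n}$, hence $\|z-w\|\le\tfrac{\sqrt d}{2}\|w\|$ and $\|z\|\le(1+\tfrac{\sqrt d}{2})\|w\|\le(2+\sqrt d)\|w\|$ for $z$ in that cube, so that $[\g(0,w\sqrt{n/d})\tfrac{n^{d/2-1}}{d^{d/2}}]^{\beta}\le c_3\|z\|^{\beta(2-d)}$ there; and, because $\tfrac12\sqrt{d/n}\le\sqrt d$ and $\|x-y\|_\infty<\tfrac12\sqrt d\le\sqrt d$, the constraint $w+x\in\mathscr K$ or $w+y\in\mathscr K$ forces $z+x\in\mathscr K^{\boxplus}$ or $z+x\in(\mathscr K^{\boxplus})^{\boxplus}=\mathscr K^{2\boxplus}$, i.e. $z\in\mathscr K^{2\boxplus}-x$. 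Integrating over that region gives $c_3\int_{\mathscr K^{2\boxplus}}\|u-x\|^{\beta(2-d)}du\le c_3\sup_{x}\int_{\mathscr K^{2\boxplus}}\|u-x\|^{\beta(2-d)}du$, and adding the $w=0$ contribution yields \eqref{eq:5}. The same bookkeeping applied to $\G^{n,(\beta)}(F)(y)=\sum_wF(w+y)a_n(w)$, now with $|F(w+y)|\le\|F\|_\infty$ and the single constraint $w+y\in\mathscr K$ (so $z+y\in\mathscr K^{\boxplus}\subseteq\mathscr K^{2\boxplus}$), gives $|\G^{n,(\beta)}(F)(y)|\le\|F\|_\infty\Gamma(F)\le3\Gamma(F)\|F\|_\infty$, which is \eqref{eq:2}.

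For the convergence, given $z_n\to y$ I would first peel off $\G^{n,(\beta)}(F)(z_n)-\G^{n,(\beta)}(F)(y)$, which tends to $0$ by \eqref{eq:5} once $\|z_n-y\|_\infty<\tfrac12\sqrt d$, since $\mathrm{osc}(\delta)\downarrow0$; it then suffices to prove $\G^{n,(\beta)}(F)(y)\to G^{(\beta)}(F)(y)$ for fixed $y$. Drop the $w=0$ term (which $\to0$) and write the rest as $\int H_n^y(z)\,dz$ with $H_n^y(z)=\sum_{w\neq0}F(w+y)[\g(0,w\sqrt{n/d})\tfrac{n^{d/2-1}}{d^{d/2}}]^{\beta}\ind_{B_\infty(w,\frac12\sqrt{d/n})}(z)$. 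For fixed $z\neq0$, let $w_n(z)$ be the eventually unique lattice point with $z\in B_\infty(w_n(z),\tfrac12\sqrt{d/n})$; then $w_n(z)\to z$, so $F(w_n(z)+y)\to F(z+y)$ by continuity, while the asymptotics of Lemma~\ref{lem:1}(ii) give $\g(0,w_n(z)\sqrt{n/d})\tfrac{n^{d/2-1}}{d^{d/2}}=C(d)\|w_n(z)\|^{2-d}+\mathcal{O}(\|w_n(z)\|^{-d}n^{-1})\to g(0,z)$, hence $H_n^y(z)\to F(z+y)[g(0,z)]^{\beta}$ for a.e. $z$. The majorant $|H_n^y(z)|\le c_3\|F\|_\infty\|z\|^{\beta(2-d)}\ind_{\mathscr K^{\boxplus}-y}(z)$, which lies in $L^1(dz)$ and is independent of $n$, lets dominated convergence conclude $\int H_n^y\to\int F(z+y)[g(0,z)]^{\beta}dz=G^{(\beta)}(F)(y)$. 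I expect the main obstacle to be exactly this last step — producing an $n$-independent $L^1$ majorant supported on the correct inflation of $\mathscr K$ and pinning down the pointwise limit — which in turn hinges on the local integrability of $\|\cdot\|^{\beta(2-d)}$, i.e. on $\beta(d-2)<d$.
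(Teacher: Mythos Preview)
Your proof is correct and follows essentially the same approach as the paper: separate the diagonal lattice term, bound the remaining weights via $\g(0,w\sqrt{n/d})\le c_0\|w\sqrt{n/d}\|^{2-d}$, convert the sum to an integral using the tiling by cubes $B_\infty(w,\tfrac12\sqrt{d/n})$ and the comparison $\|z\|\le(1+\tfrac{\sqrt d}{2})\|w\|$, and then invoke dominated convergence with the $L^1$ majorant $\|z\|^{\beta(2-d)}$ on a fixed inflation of $\mathscr K$. The only (purely cosmetic) difference is that the paper introduces the nearest lattice point $y(n)\in\Z$ and recenters the sum there before passing to the integral, while you work directly with $x$ and $y$; your bookkeeping is in fact slightly cleaner and yields the sharper bound $|\G^{n,(\beta)}(F)(y)|\le\|F\|_\infty\,\Gamma(F)$ before relaxing to the stated $3\|F\|_\infty\,\Gamma(F)$.
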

\begin{proof} Let us prove \eqref{eq:5}. Assume $w\in \Z$. If $w+x\notin \K$ and  $w+y\notin \K$
then $F(w+x)-F(w+y)=0$. So, if this difference is not zero then
$w \in \Z\cap((\K-x) \bigcup (\K-y))$. Consider now, $y(n)\in \Z$ one of the closets elements of $\Z$ to $y$, in 
the infinity norm. In particular, $\|y-y(n)\|_\infty \le \frac12\sqrt{d/n}\le \frac12\sqrt{d}$. It is straightforward
to show that 
$$
(\K-x) \bigcup (\K-y)\subset (\K^\boxplus-y(n)).
$$ 
Denote by $A_n= \Z\cap\K^\boxplus$. Since $|F(w+x)-F(w+y)|\le \hbox{osc}(\|x-y\|_\infty)$, we get
{\small
$$
\begin{array}{l}
|\G^{n,(\beta)}(F)(x)-\G^{n,(\beta)}(F)(y)|\le \hbox{osc}(\|x-y\|_\infty)
\sum\limits_{w\in A_n-y(n)}  \left(\frac{d}{n}\right)^{d/2} \, 
\left[\g\left(0,w\sqrt{n/d}\right)\frac{n^{d/2-1}}{d^{d/2}}\right]^\beta\\
\\
\le \hbox{osc}(\|x-y\|_\infty)\left(\frac{\g^\beta(0,0)}{n^\gamma}d^{\frac{d}{2}(1-\beta)}+
\sum\limits_{w\in A_n \atop w\neq y(n)}  \left(\frac{d}{n}\right)^{d/2} \, 
\left[\g\left(y(n)\sqrt{n/d},w\sqrt{n/d}\right)\frac{n^{d/2-1}}{d^{d/2}}\right]^\beta\right)\\
\\
\le \hbox{osc}(\|x-y\|_\infty)\left(\frac{\g^\beta(0,0)}{n^\gamma}d^{\frac{d}{2}(1-\beta)}+
c_1^\beta \sum\limits_{w\in A_n \atop w\neq y(n)}  \left(\frac{d}{n}\right)^{d/2} \, 
\|y(n)-w\|^{\beta(2-d)}\right),
\end{array}
$$
}
where $\gamma={\frac{d-\beta(d-2)}{2}}>0$.
For $z\in B_\infty\big(w,\frac12\sqrt{d/n}\big)$, we have
$$
\|y(n)-z\|\le \|y(n)-w\|+\|w-z\|\le \frac{\sqrt{d}}{2}\sqrt{d/n}+\|y(n)-w\|.
$$
Since $y(n)\neq w$ and both belong to $\Z$, we conclude that $\|y(n)-w\|\ge \sqrt{d/n}$ and
$$
\|y(n)-z\|\le\left(1+\sqrt{d}/2\right)\|y(n)-w\|.
$$
Thus, we obtain \eqref{eq:5} from
{\small
$$
\begin{array}{l}
|\G^{n,(\beta)}(F)(x)-\G^{n,(\beta)}(F)(y)|\le\\
\\
\le \hbox{osc}(\|x-y\|_\infty)\left(\frac{\g^\beta(0,0)}{n^\gamma}d^{\frac{d}{2}(1-\beta)}+
c_3 \sum\limits_{w\in A_n} \int_{B_\infty\big(w,\frac12\sqrt{d/n}\big)}
\|y(n)-z\|^{\beta(2-d)}\, dz\right)\\
\\
\le \hbox{osc}(\|x-y\|_\infty)\left(\frac{\g^\beta(0,0)}{n^\gamma}d^{\frac{d}{2}(1-\beta)}+
c_3 \int_{\K^{2\boxplus}} \|y(n)-z\|^{\beta(2-d)}\, dz\right)\\
\\ 
\le \hbox{osc}(\|x-y\|_\infty)\left(\g^\beta(0,0)d^{\frac{d}{2}(1-\beta)}+
c_3 \sup\limits_{u\in\RR^d}\int_{\K^{2\boxplus}}
\|u-z\|^{\beta(2-d)}\, dz\right)=\hbox{osc}(\|x-y\|_\infty) \, \Gamma(F),
\end{array}
$$
}where $\K^{2\boxplus}=(\K^\boxplus)^\boxplus$, and 
$c_3=c_3(d,\beta)=c_1^\beta(d)\left(1+\sqrt{d}/2\right)^{\beta(d-2)}$. 
In particular, we have
$$
|\G^{n,(\beta)}(F)(x)-\G^{n,(\beta)}(F)(y)| \le 2\|F\|_\infty\Gamma(F).
$$
We obtain in a similar way 
$$
\begin{array}{ll}
|\G^{n,(\beta)}(F)(y(n))|\hspace{-0.3cm}&\le \|F\|_\infty\left(\g^\beta(0,0)d^{\frac{d}{2}(1-\beta)}+
c_3 \sup\limits_{u\in\RR^d}\int_{\K^{\boxplus}}\|u-y\|^{\beta(2-d)}\, dy\right)\\
\hspace{-0.3cm}&\le  \|F\|_\infty \Gamma(F),
\end{array}
$$
and inequality \eqref{eq:2} is shown.

Following the same ideas as in the proof of \eqref{eq:1.5}, we get for any $y$
$$
\lim\limits_{n\to\infty} \G^{n,(\beta)}(F)(y(n))=G^{(\beta)} F(y).
$$
The result is shown by using \eqref{eq:5}.
\end{proof}

\begin{corollary} 
\label{cor:1}
Under the hypothesis of Proposition \ref{pro:1} we have
$$
\lim\limits_{n\to\infty} \sum\limits_{x\in \Z} \left(\G^{(n,\beta)}(F)(x)-1\right)^+ \, F(x) \left(\frac{d}{n}\right)^{d/2}=
\int_{\RR^d} \left(G^{(\beta)}(F)(x)-1\right)^+ \, F(x) \, dx,
$$
\end{corollary}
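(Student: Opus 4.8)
The plan is to rewrite the left-hand side as the Lebesgue integral of an explicit step function and then invoke dominated convergence, using Proposition \ref{pro:1} for both the uniform bound and the pointwise limit. For $x\in\Z$ put $Q_n(x)=B_\infty\big(x,\tfrac12\sqrt{d/n}\big)$; the family $\{Q_n(x):x\in\Z\}$ tiles $\RR^d$ up to a Lebesgue-null set, each cube having measure $\rpar{d/n}^{d/2}$. Hence, with
$$
\Phi_n(y)\df\sum_{x\in\Z}\rpar{\G^{n,(\beta)}(F)(x)-1}^+ F(x)\,\ind_{Q_n(x)}(y),
$$
one has $\int_{\RR^d}\Phi_n(y)\,dy=\sum_{x\in\Z}\rpar{\G^{n,(\beta)}(F)(x)-1}^+ F(x)\rpar{d/n}^{d/2}$, which is exactly the quantity under the limit. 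So it suffices to show $\int\Phi_n\to\int\rpar{G^{(\beta)}(F)(y)-1}^+F(y)\,dy$.

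Next I would record the domination. Since $F\ge0$ and $\g\ge0$, the operator $\G^{n,(\beta)}$ is nonnegative, so $0\le\rpar{\G^{n,(\beta)}(F)(x)-1}^+\le\G^{n,(\beta)}(F)(x)\le 3\Gamma(F)\|F\|_\infty$ by \eqref{eq:2}. Moreover $\Phi_n(y)\neq 0$ forces $y\in Q_n(x)$ for some $x\in\K$, whence $d_\infty(y,\K)\le\tfrac12\sqrt{d/n}\le\sqrt d$, i.e. $y\in\K^\boxplus$. Combining these, $0\le\Phi_n\le 3\Gamma(F)\|F\|_\infty^2\,\ind_{\K^\boxplus}$ for every $n$, and the right-hand side is in $L^1(dy)$ because $\K$ is compact, so $\K^\boxplus$ is bounded.

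For the pointwise convergence, fix $y$ and let $x_n(y)\in\Z$ be the center of the tiling cube containing $y$, so that $\|y-x_n(y)\|_\infty\le\tfrac12\sqrt{d/n}\to0$, i.e. $x_n(y)\to y$. The last assertion of Proposition \ref{pro:1} gives $\G^{n,(\beta)}(F)(x_n(y))\to G^{(\beta)}(F)(y)$; continuity of $F$ gives $F(x_n(y))\to F(y)$; and $a\mapsto(a-1)^+$ is continuous, so $\Phi_n(y)\to\rpar{G^{(\beta)}(F)(y)-1}^+F(y)$ for a.e. $y$ (the exceptional set being the null union over $n$ of the cube boundaries). Since $G^{(\beta)}(F)$ is a pointwise limit of the uniformly bounded functions $\G^{n,(\beta)}(F)$, it is bounded, hence $\rpar{G^{(\beta)}(F)-1}^+F$ is bounded with support in $\K$, hence integrable. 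Dominated convergence then yields the desired identity.

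I do not expect a serious obstacle: the statement is essentially bookkeeping built on Proposition \ref{pro:1}. The one point needing genuine care is the domination — producing a single integrable majorant valid for all $n$ — and that is precisely where the uniform-in-$n$ estimate \eqref{eq:2}, rather than just the pointwise convergence, is indispensable.
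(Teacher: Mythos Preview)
Your argument is correct in substance and rests on the same dominated-convergence idea as the paper, drawn from Proposition~\ref{pro:1}. One slip: you write ``since $F\ge 0$'', but $F\in\CK$ is not assumed nonnegative in Proposition~\ref{pro:1}; the domination you state still holds, since $(a-1)^+\le |a|$ together with \eqref{eq:2} gives $|\Phi_n|\le 3\Gamma(F)\,\|F\|_\infty^2\,\ind_{\K^\boxplus}$ regardless of the sign of $F$.

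The paper organizes the proof in two steps rather than one: it first shows
\[
\int\big(\G^{n,(\beta)}(F)(y)-1\big)^+F(y)\,dy\;\longrightarrow\;\int\big(G^{(\beta)}(F)(y)-1\big)^+F(y)\,dy
\]
by dominated convergence (using \eqref{eq:2}), and then separately bounds the difference between this integral and the Riemann sum over $\Z$ using the equicontinuity estimate \eqref{eq:5}. Your route --- applying dominated convergence directly to the step function $\Phi_n$ and invoking the final assertion of Proposition~\ref{pro:1} along the cube centers $x_n(y)\to y$ --- is slightly more economical, as it bypasses \eqref{eq:5} altogether; the paper's two-step version, on the other hand, makes the role of the equicontinuity transparent.
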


\begin{proof} 
Proposition \ref{pro:1} shows that the sequence of functions
$$
\left(\G^{(n,\beta)}(F)(\bullet)-1\right)^+ \, F(\bullet) 
$$
converge pointwise to $\left(G^\beta(F)(\bullet)-1\right)^+ F(\bullet)$, and (\ref{eq:2}) provides the domination
we need to show that 
$$
\lim\limits_{n\to \infty} \int \left(\G^{(n,\beta)}(F)(x)-1\right)^+ \, F(x)\, dx
=\int_{\RR^d} \left(G^{(\beta)}(F)(x)-1\right)^+ \, F(x) \, dx.
$$

On the other hand, if $x\in \Z$ and $y\in B_\infty\big(x,\frac12\sqrt{d/n}\big)$ we have from 
\eqref{eq:5}
$$
|\G^{n,(\beta)}(F)(x)-\G^{n,(\beta)}(F)(y)|\le \hbox{osc}(\|x-y\|_\infty) \Gamma(F),
$$
which together with \eqref{eq:2} gives, for $A_n= \Z\cap \K^\boxplus$
{\small
$$
\begin{array}{l}
\Big|\int \left(\G^{(n,\beta)}(F)(y)-1\right)^+ F(y)\, dy-\hspace{-0.3cm}
\sum\limits_{x\in \ZZ^{d,n}} \left(\G^{(n,\beta)}(F)(x)-1\right)^+ F(x) \left(\frac{d}{n}\right)^{d/2}\Big|\\
\le \sum\limits_{x\in A_n} 
\int\limits \left|\left(\G^{(n,\beta)}(F)(y)-1\right)^+F(y)-\left(\G^{(n,\beta)}(F)(x)-1\right)^+F(x)\right| 
\ind_{B_\infty\big(x,\frac12\sqrt{d/n}\big)}(y)\, dy\\
\\
\le 4 \Gamma(F)\|F\|\,\hbox{osc}\big(\frac12\sqrt{d/n}\big)\, \mu\Big(\mathscr{K}^{2\boxplus}\Big),
\end{array}
$$
}
where $\mu$ is the Lebesgue measure and the result follows.
\end{proof}
\medskip

Our next step it to show $\sum\limits_{x\in \Z} \left(\G^{(n,\beta)}(F)(x)-1\right)^+ 
\, F(x) \left(\frac{d}{n}\right)^{d/2}\ge 0$ for all $n,\beta\ge 1$ and all $F\in \C_{\mathcal{K}}$. 
This will be a consequence of results on potential matrices 
(see \cite{dell2009} or \cite{libroDMSM2014}) and Proposition \ref{pro:3} in Appendix \ref{app:1}.
To use them, we first need to prove the following Lemma.

\begin{lemma}
\label{lem:2}
Assume that $A\subset \Z$ is a finite nonempty set. Then the matrix $U$ defined as, for $x,w\in A$
$$
U_{x,w}=\g(x\sqrt{n/d},w\sqrt{n/d}),
$$
is a nonsingular symmetric potential.
\end{lemma}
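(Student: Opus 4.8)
The plan is to recognize $U$ as a scalar multiple of the restriction to $A$ of the Green matrix of a genuine transient Markov chain, and then invoke the classical characterization of potential matrices. Concretely, consider the simple random walk $(\Ss_\ell)$ on $\ZZ^d$ with $d\ge 3$, which is transient by Lemma \ref{lem:1}; its Green function $\g(u,v)=\sum_{\ell\ge 0}\PP_u(\Ss_\ell=v)$ is finite, symmetric, and the full matrix $\big(\g(u,v)\big)_{u,v\in\ZZ^d}$ is the potential matrix of this chain. The map $z\mapsto z\sqrt{n/d}$ is a bijection from $\ZZ^d$ onto $\Z$, so up to relabeling indices, $U_{x,w}=\g(x\sqrt{n/d},w\sqrt{n/d})$ is simply the principal submatrix of the Green matrix indexed by the finite set $A'\df\sqrt{n/d}\,A\subset\ZZ^d$.

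The key step is that a principal submatrix of a potential matrix is again a potential matrix. This is standard in the theory of transient Markov chains: if $(Y_\ell)$ is the simple random walk on $\ZZ^d$ and $A'$ is a finite subset, then the trace of $Y$ on $A'$ — i.e., the chain watched only at the successive times it visits $A'$ — is a transient Markov chain on $A'$ (transient because the ambient chain is, and because $A'$ is finite so only finitely many visits occur a.s.), and a short computation with the strong Markov property shows its Green matrix on $A'$ equals the principal submatrix $\big(\g(u,v)\big)_{u,v\in A'}$. Hence $U$ (after the trivial rescaling of the index set) is the potential matrix of a transient finite Markov chain, which is exactly what "potential" means here. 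Symmetry of $U$ is inherited from the symmetry of $\g$, which in turn follows because the simple random walk has symmetric one-step transitions ($\PP(\xi=\mathbf{e})=\PP(\xi=-\mathbf{e})=\tfrac1{2d}$), so $\PP_u(\Ss_\ell=v)=\PP_v(\Ss_\ell=u)$ for every $\ell$.

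Nonsingularity then follows from general facts about $M$-matrices collected in the Appendix: the inverse of a finite potential matrix of a transient chain is an $M$-matrix of the form $c(\II-Q)$ with $Q$ substochastic and $\|Q^k\|\to 0$, so in particular $U^{-1}$ exists; equivalently one can argue directly that $\g$ restricted to $A'$ is strictly positive definite. For the latter: for any real vector $(a_u)_{u\in A'}$, write $\g(u,v)=\int \widehat{\g}$ via the spectral/Fourier representation of the random walk Green function, giving $\sum_{u,v} a_u a_v \,\g(u,v)=\int \big|\sum_u a_u e^{i u\cdot\theta}\big|^2 \,\widehat{G}(\theta)\,d\theta\ge 0$, with equality forcing all $a_u=0$ since $\widehat G>0$ a.e.; either route gives nonsingularity. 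I expect the only real subtlety to be the bookkeeping in identifying the submatrix of $\g$ with the Green matrix of the trace chain on $A'$ — verifying the formula $\g|_{A'\times A'}=(\II - Q_{A'})^{-1}\cdot(\text{diagonal normalization})$ via first-entrance decomposition — but this is routine once one conditions on the first visit to $A'$ and uses the strong Markov property. The rescaling $z\mapsto z\sqrt{n/d}$ plays no role beyond relabeling, since $\g$ is defined on the lattice $\ZZ^d$ and $U$ merely evaluates it at lattice points.
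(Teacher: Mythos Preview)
Your argument is correct and follows essentially the same idea as the paper: both identify $U$ with the Green matrix of the trace of the simple random walk on the finite set $A'=\sqrt{n/d}\,A\subset\ZZ^d$, which is a transient finite-state chain and hence yields a nonsingular symmetric potential. The only cosmetic difference is that the paper first passes through an auxiliary finite box $J\supset A'$ (building the trace chain on $J$, whose potential $V$ satisfies $V_{jk}=\g(j,k)$, and then taking the principal submatrix indexed by $A'$), whereas you trace directly on $A'$; your route is slightly more direct, and your Fourier remark for strict positive definiteness is a pleasant alternative for nonsingularity that the paper does not mention. (Minor slip: the bijection $z\mapsto z\sqrt{n/d}$ goes from $\Z$ to $\ZZ^d$, not the other way around.)
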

As usual we have taken a particular order on $A$ to define $U$, for example the lexicographical order.
\begin{proof} Consider a large integer $N$ such that $E=\sqrt{n/d} \, A\subset [-(N-1),(N-1)]^d \cap \ZZ^d$.
We define the following symmetric transition matrix $P$ indexed by $J=[-N,N]^d \cap \ZZ^d$ 
$$
P_{jk}=\PP_j(R_J<\infty, \Ss_{R_J}=k),
$$
where $R_J=\inf\{p\ge 1: \Ss_p\in J\}$ is the first strict hitting time to $J$, for the process
$\Ss$, and $\PP_j$ indicates that $\Ss_0=j$. If $\hbox{int}(J)$ denotes
the points in $J$ for which all their neighbours in $\ZZ^d$ belong to $J$ (the interior of $J$), then 
$P_{jk}=\frac1{2d}$ for $j\in \hbox{int }(J), k\in J, \|j-k\|_\infty=1$, while $P_{jk}>0$ for
all $j,k \in \partial J=J\setminus \hbox{int}(J)$. We also notice that
$$
\sum_{k\in J} P_{jk}=\begin{cases} 1&\hbox{if } j\in \hbox{int }(J)\\
\PP_j(R_J<\infty)<1 &\hbox{if } j\in \partial J.
\end{cases}
$$
Notice that $P$ is irreducible and strictly substochastic at least at one vertex.
Thus, the matrix $M=\II-P$ is nonsingular (is an M-matrix) and its inverse $V=(\II-P)^{-1}$ is just the
Green potential, restricted to $J$, for the standard random walk
$$
V_{j k}=\EE_j\left(\sum_\ell \ind_{k}(\Ss_\ell)\right)=\g(j,k).
$$
Our matrix $U$ is a principal submatrix of $V$, that is $U=V\big|_{E\times E}$.
Thus $U$ is a nonsingular potential, which corresponds to the potential of the standard random
walk on $\ZZ^d$, restricted to $E$.
\end{proof}

\begin{proposition} 
\label{pro:2}
For all $n\ge 1, \beta\ge 1$ and all $F\in \C_{\mathcal{K}}$ it holds
$$
\sum\limits_{x\in \Z} \left(\G^{(n,\beta)}(F)(x)-1\right)^+ \, F(x) \left(\frac{d}{n}\right)^{d/2}\ge 0.
$$
\end{proposition}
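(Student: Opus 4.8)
The plan is to reduce the stated inequality to a known structural property of nonsingular symmetric potential matrices, via a finite-dimensional approximation that has already been set up in Lemma~\ref{lem:2}. Fix $n\ge 1$, $\beta\ge 1$ and $F\in\C_{\mathcal K}$, and let $\mathscr K=\supp F$. The left-hand sum is actually finite: the summand vanishes unless $x\in\Z$ with $F(x)\neq 0$, i.e.\ unless $x$ lies in the finite set $A\df\Z\cap\mathscr K$. Moreover, for $x\in A$, the quantity $\G^{(n,\beta)}(F)(x)$ only involves values $F(w+x)$ with $w\in\Z$ such that $w+x\in\mathscr K$, hence $w+x\in A$ as well. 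So enlarging slightly if necessary, pick a finite set $A'\subset\Z$ containing all points $w+x$ that occur, and work entirely within $A'$.

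The key step is to recognize the sum as an instance of a general inequality for potential matrices. By Lemma~\ref{lem:2}, the matrix $U=(U_{x,w})_{x,w\in A'}$ with $U_{x,w}=\g(x\sqrt{n/d},w\sqrt{n/d})$ is a nonsingular symmetric potential. Set $\lambda_x=F(x)\,(d/n)^{d/2}\ge 0$ for $x\in A'$ and let $D=\mathrm{diag}(\lambda_x)$. Observe that, up to the Jacobian normalization carried by the weights $(d/n)^{d/2}$, one has for $x\in A'$
\[
\G^{(n,\beta)}(F)(x)=\sum_{w\in A'}\Big[U_{x,w}\,\tfrac{n^{d/2-1}}{d^{d/2}}\Big]^\beta\,\lambda_w
=\big(U^{(\beta)}D\,\ind\big)_x,
\]
where $U^{(\beta)}$ denotes the Hadamard $\beta$-power of the (rescaled) matrix. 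The sum in the Proposition is then exactly
\[
\sum_{x\in A'}\big((U^{(\beta)}D\ind)_x-1\big)^+\,\lambda_x
=\big\langle D\ind,\ (U^{(\beta)}D\ind-\ind)^+\big\rangle .
\]
Now invoke the result on powers of potential matrices from \cite{dell2009} (equivalently \cite{libroDMSM2014}), together with Proposition~\ref{pro:3} of the Appendix, which characterizes potential matrices precisely through the nonnegativity of such a quadratic/truncated expression: since $U$ (suitably rescaled, which preserves the potential property) is a nonsingular symmetric potential, so is its Hadamard $\beta$-power $U^{(\beta)}$ for every $\beta\ge 1$, and the characterizing inequality $\langle v,\ (U^{(\beta)}v-\ind)^+\rangle\ge 0$ holds for every $v\ge 0$; apply it to $v=D\ind\ge 0$.

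The main obstacle is bookkeeping rather than depth: one must make sure the rescaling of $U$ by the scalar $n^{d/2-1}/d^{d/2}$ and the Hadamard $\beta$-power genuinely stay within the class to which Proposition~\ref{pro:3} applies, and that the weights $\lambda_x$ assemble correctly into the vector appearing in that inequality (a positive scalar multiple of a symmetric potential is again a symmetric potential, and the Appendix result is stated for exactly this situation, so this is routine). A secondary point to check is that enlarging $A$ to $A'$ does not change the sum — it does not, because the extra indices $x\in A'\setminus A$ carry $F(x)=0$, hence contribute zero regardless of the value of $\G^{(n,\beta)}(F)(x)$, and restricting the potential $V$ from a large box to any principal submatrix still yields a nonsingular potential, as already used in Lemma~\ref{lem:2}. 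With these verifications in place, the nonnegativity is immediate from the cited matrix inequality.
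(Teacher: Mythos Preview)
Your overall structure matches the paper's proof: reduce to a finite index set, invoke Lemma~\ref{lem:2} to get a nonsingular symmetric potential matrix, apply Proposition~\ref{pro:4} to pass to the Hadamard $\beta$-power, and then conclude via Proposition~\ref{pro:3}. The bookkeeping of constants is slightly different from the paper's (they absorb $(d/n)^{d/2}$ into the matrix via the scalar $c(n,d,\beta)=(d/n)^{d/(2\beta)}\,n^{d/2-1}/d^{d/2}$ and take $v_x=F(x)$; you absorb it into the vector $\lambda$), but either normalization is fine.

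However, there is a genuine gap. You write ``$\lambda_x=F(x)\,(d/n)^{d/2}\ge 0$'' and then say the inequality $\langle v,(U^{(\beta)}v-\ind)^+\rangle\ge 0$ ``holds for every $v\ge 0$; apply it to $v=D\ind\ge 0$.'' But $F\in\C_{\mathcal K}$ is \emph{not} assumed nonnegative, so $\lambda$ (equivalently $v$) is a signed vector. Worse, if one only knew the inequality for $v\ge 0$ it would be a triviality (both factors nonnegative), and the Proposition itself would be trivial for such $F$. The whole content of the statement, and its use downstream in establishing the CMP, lies precisely in the case where $F$ changes sign. What saves the argument is that Proposition~\ref{pro:3} actually asserts $\langle(Uv-\ind)^+,v\rangle\ge 0$ for \emph{all} $v\in\RR^E$ whenever $U$ is a symmetric potential; you must invoke this full strength, applied to the signed vector $v_x=F(x)$ (or your $\lambda$), exactly as the paper does. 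Once you drop the incorrect ``$\ge 0$'' and cite Proposition~\ref{pro:3} for arbitrary $v$, your argument is correct and essentially identical to the paper's.
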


\begin{proof} Consider $F\in \C_{\mathcal{K}}$ and denote by 
$\mathscr{K}$ its support. We consider a large $a>0$ such that 
$\mathscr{K}\subset [-a,a]^d$. Consider $U$
the symmetric matrix indexed by $A=\Z \bigcap [-a,a]^d$, given by
$$
U_{x,w}=c(n,d,\beta)\, \g\left(x\sqrt{n/d},w\sqrt{n/d}\right),
$$
with
$$
c(n,d,\beta)=\left(\frac{d}{n}\right)^{\frac{d}{2\beta}} \frac{n^{d/2-1}}{d^{d/2}}.
$$

This is a nonsingular symmetric potential matrix according to the previous Lemma. 
Then, Proposition \ref{pro:4} shows that its Hadamard power $U^{(\beta)}$ is again
a nonsingular symmetric potential matrix. Finally, Proposition \ref{pro:3} allow us to conclude 
that for all $v\in \RR^A$
$$
0\le \left(\frac{d}{n}\right)^{\frac{d}{2}}\langle (U^{(\beta)} v-1)^+, v\rangle=\left(\frac{d}{n}\right)^{\frac{d}{2}}
\sum\limits_{x\in A}\left(\sum\limits_{w\in A} (U_{x,w})^{\beta}\, v_w-1\right)^+ v_x.
$$
The result follows by taking $v\in \RR^A$ with $v_x=F(x)$.
\end{proof}

Now, we introduce the version of the maximum principle suitable for our purposes.
\begin{definition}
\label{def:CMP}
A positive linear bounded operator $V$ defined on $\CK$ is said to satisfy the Complete Maximum
principle on  $\CK$ if for any  $F\in \CK$ it holds: if $V(F)(x)\le 1$ whenever $F(x)\ge 0$, 
then $V(F)(x)\le 1$ for all $x\in \O$.
\end{definition}

We are in a position to prove the main result of this section.

\begin{theorem} Assume that $1\le \beta < \frac{d}{d-2}$, then the operator $G^{(\beta)}$ satisfies the CMP
on $\CK$.
\end{theorem}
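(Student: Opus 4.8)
The plan is to deduce the Complete Maximum Principle for $G^{(\beta)}$ on $\CK$ by passing to the limit in the discrete inequality of Proposition \ref{pro:2}, using the convergence established in Corollary \ref{cor:1}. The key observation is that a positive bounded operator $V$ satisfies the CMP on $\CK$ if and only if, for every $F\in\CK$, one has $\langle (V(F)-1)^+, F\rangle_{+} = 0$ in the appropriate sense — more precisely, the CMP is equivalent to the statement that $(V(F)(x)-1)^+ \, F(x) \le 0$ (hence $=0$, since on $\{F\ge 0\}$ the product is $\ge 0$) whenever $V(F)\le 1$ on $\{F\ge 0\}$. So the real engine I need is a quantitative inequality of the form $\int (G^{(\beta)}(F)(x)-1)^+\,F(x)\,dx \ge 0$ for all $F\in\CK$, which will be the continuous analogue of Proposition \ref{pro:2}.

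**First I would** fix $F\in\CK$ and apply Proposition \ref{pro:2} to get, for every $n\ge 1$,
$$
\sum_{x\in\Z}\left(\G^{(n,\beta)}(F)(x)-1\right)^+ F(x)\left(\frac{d}{n}\right)^{d/2}\ge 0.
$$
Then I let $n\to\infty$ and invoke Corollary \ref{cor:1}, which identifies the limit of the left-hand side as $\int_{\RR^d}(G^{(\beta)}(F)(x)-1)^+\,F(x)\,dx$. This immediately yields
$$
\int_{\RR^d}\left(G^{(\beta)}(F)(x)-1\right)^+ F(x)\,dx \ge 0 \qquad\text{for all } F\in\CK.
$$

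**Next I would** derive the CMP from this integral inequality by a perturbation/localization argument, since the inequality as stated only controls an average. Suppose $F\in\CK$ satisfies $G^{(\beta)}(F)(x)\le 1$ for every $x$ with $F(x)\ge 0$; I must show $G^{(\beta)}(F)\le 1$ everywhere. Fix a point $x_0$ with $G^{(\beta)}(F)(x_0) > 1$; necessarily $F(x_0) < 0$. For $\e > 0$ let $F_\e = F + \e\varphi$ where $\varphi\in\CK$ is a nonnegative bump function, supported near $x_0$, equal to a large constant $M$ on a small neighborhood of $x_0$. On the positivity set of $F_\e$: where $F\ge 0$ we still have $F_\e\ge 0$ and (for $M,\e$ controlled) $G^{(\beta)}(F_\e)$ is close to $G^{(\beta)}(F)\le 1$ — this needs care and is where the argument is delicate — while on the new positive region created near $x_0$ one must check $G^{(\beta)}(F_\e)$ can be kept $\le 1$ there too by shrinking the support of $\varphi$ and using continuity of $G^{(\beta)}(F)$ (note $G^{(\beta)}(F)(x_0)>1$ forces $F(x_0)<0$, but continuity lets us arrange the bump's support to avoid the set $\{G^{(\beta)}(F)\ge 1\}$ — so actually the clean route is: apply the integral inequality to $F_\e$, and since $(G^{(\beta)}(F_\e)-1)^+ F_\e$ is $\le 0$ off the bump and the bump contributes a strictly positive term forced by continuity near $x_0$, we contradict the inequality as $\e$ ranges suitably). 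The cleanest version chooses $\varphi$ supported in a small ball $B$ around $x_0$ on which $G^{(\beta)}(F) > 1 + \delta$ and $F < -\delta$, makes $F_\e$ positive and large on $B$ so that $(G^{(\beta)}(F_\e)-1)^+ F_\e > 0$ there with a uniform lower bound, while off $B$ the product stays $\le$ a quantity tending to $0$; integrating gives a negative total for small $\e$, the contradiction.

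**The main obstacle** I anticipate is precisely this last localization step: the integral inequality $\int(G^{(\beta)}(F)-1)^+ F\,dx\ge 0$ is genuinely weaker than the pointwise CMP, and extracting the pointwise conclusion requires a perturbation argument that does not disturb the hypothesis $G^{(\beta)}(F)\le 1$ on $\{F\ge 0\}$ while simultaneously forcing a strictly positive contribution at a hypothetical violation point. One must exploit continuity of $x\mapsto G^{(\beta)}(F)(x)$ (which holds since $g^\beta$ is locally integrable for $\beta < d/(d-2)$, so $G^{(\beta)}$ maps $\CK$ into $\C(\RR^d)$ — in fact into $\C_0$) together with the fact that $G^{(\beta)}(\varphi)\to 0$ uniformly as the support of the nonnegative $\varphi$ shrinks to a point with bounded mass. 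Handling the interplay of these two effects — keeping the old constraint intact while activating a new positive term — is the technical heart; everything else (Proposition \ref{pro:2}, Corollary \ref{cor:1}) is already in hand.
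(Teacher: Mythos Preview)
Your first step is exactly right and matches the paper: Proposition~\ref{pro:2} together with Corollary~\ref{cor:1} gives
\[
\int_{\RR^d}\bigl(G^{(\beta)}(F)(x)-1\bigr)^+ F(x)\,dx \ge 0 \qquad\text{for every } F\in\CK.
\]

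Where you go astray is in the second step. You assert that this integral inequality ``only controls an average'' and is ``genuinely weaker than the pointwise CMP'', and then embark on a perturbation/localization scheme. This is unnecessary, and the perturbation argument as you sketch it is muddled (you never actually produce a configuration violating the inequality; the signs of the bump contribution and the off-bump contribution do not give a contradiction).

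The point you are missing is that under the CMP hypothesis the integrand already has a sign. Suppose $G^{(\beta)}(F)(x)\le 1$ on $\{F\ge 0\}$. Then $(G^{(\beta)}(F)(x)-1)^+=0$ on $\{F\ge 0\}$, so the entire integral reduces to
\[
\int_{\{F<0\}}\bigl(G^{(\beta)}(F)(x)-1\bigr)^+ F(x)\,dx,
\]
and on $\{F<0\}$ the integrand is $\le 0$. Hence the integral is simultaneously $\ge 0$ and $\le 0$, so it vanishes, forcing $(G^{(\beta)}(F)(x)-1)^+=0$ for a.e.\ $x$ with $F(x)<0$. Thus $G^{(\beta)}(F)\le 1$ almost everywhere, and since $G^{(\beta)}(F)\in\C_0(\RR^d)$ (here is where $\beta<d/(d-2)$ enters, via local integrability of $g^\beta$), it holds everywhere. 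That is the paper's proof in full; no perturbation is needed.
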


\begin{proof} Consider $F\in \C_{\mathcal{K}}$. It is clear that $G^{(\beta)}(F)$ is a continuous
function vanishing at $\infty$.
Assume that $G^{(\beta)}(F)(x)\le 1$ for those $x\in \RR^d$ such that $F(x)\ge 0$.
From Corollary \ref{cor:1} and Proposition \ref{pro:2}, we have
$$
0\le \int\limits_{\RR^d} \left(G^{(\beta)} (F) (x)-1\right)^+ F(x) \, dx=\int\limits_{x: \, F(x)<0} 
\left(G^{(\beta)} (F) (x)-1\right)^+ F(x) \, dx.
$$
We conclude that $G^{(\beta)}(F)(x)\le 1$ hold a.s. The continuity of $G^{(\beta)}(F)$ shows that  
$G^{(\beta)}(F)\le 1$ and the result follows.

\end{proof}

\section{Powers of the Green potential in bounded regular open sets}
\label{sec:2}

In this section we consider $\O\subset \RR^d$ a bounded regular open set. Let us introduce some
of the basic notation we need. The Green kernel for Brownian Motion in $\O$ is
$$
\gO(x,y)=g(x,y)-\EE_x(g(B_\TO,y)),
$$
for $x,y \in \O$, where $\TO=\TO(B)$ is the exiting time of $\O$ for the Brownian Motion $B$. 
The associated Green operator is denoted by $\GO$.

In the same spirit, if $E\subset \ZZ^d$ we denote by $\g_E$ the Green kernel associated to the random walk, 
killed upon leaving $E$, which is given by
$$
\g_E(x,y)=\EE_x\left(\sum\limits_{k=0}^{R_E-1} \ind_y(\Ss_k)\right).
$$
defined for $x,y \in E$, where $R_E=\inf\{k\ge 0: \, \Ss_k\in E^c\}$. We extend this function by $0$, that
is $\g_E(x,y)=0$ if $x$ or $y$ belong to $E^c$. We have a similar formula
$$
\g_E(x,y)=\g(x,y)-\EE_x(\g(\Ss_{R_E},y)),
$$
which is valid for all $x,y\in \ZZ^d$. 

We consider the $\beta$-powers of these functions:
$\gbO(x,y)$ and $\g_E^\beta$ and we shall prove similar results as in Section \ref{sec:1}.

\subsection{Cubic open sets}
In this section we consider the following family of simple open sets: {\it cubic open sets}, 
which are constructed as follows. Fix a positive integer
$m$ and a finite set $E\subset \ZZ^d$ and consider the following set
$$
Q=\bigcup\limits_{k\in E} \overline{B_\infty(k\sqrt{d/m}),1/2 \sqrt{d/m})}.
$$
We take $\O=int(Q)$ and call the cubic open set (CO)
with height $m$ and basis $E$. We point out that a CO set $\O$  can be described using 
different couples $(m,E)$, as we will see. 
These open sets are bounded and regular for the Brownian Motion 
(they satisfy for example the exterior cone condition). In general $\O$ is not connected, but a finite
union of connected components, which are also CO of the same height.

Now, fix $\O$ a CO with height $m\ge 1$ and basis $E$. 
For every $\ell\in\NN$, 
we denote by $n=n_\ell=m\, 3^{2\ell}$ and consider the set $A_n=\sqrt{d/n}\, \ZZ^d\, \cap\, \O\subset \Z$.
We denote by $E_n=\sqrt{n/d}\, A_n\subset \ZZ^d$. We point out that $\O$ is also  a CO 
with height $n$ and basis $E_n$. Each point $z\in A_n$ generates
$3^d$ points in $A_{n+1}: z+\frac13 \sqrt{d/n}\, \mathbf{e}$, with $\mathbf{e}\in \E=\{-1,0,1\}^d$. Also, 
$E_{n+1}=3E_n+\E$.

Recall that for $x\in \RR^d$, we choose $x(n)\in \Z=\sqrt{d/n}\, \ZZ^d$ any of the closets elements in $\Z$ to $x$. 
To avoid any ambiguity, if there are more than one, we take $x(n)$ the smallest such elements in the lexicographical 
order. Notice that if $x\in \O$ then $x(n)\in A_n$.

Similar to Section \ref{sec:1} we define for all $F\in \CK(\O)$ and all $x\in \RR^d$
{\footnotesize
$$
\G^{n,(\beta)}_\O (F)(x)=\sum\limits_{w\in A_n} F(w+x-x(n)) \left(\frac{d}{n}\right)^{d/2} 
\left[\g_{E_n}\left(x(n)\sqrt{n/d},w\sqrt{n/d}\right)\frac{n^{d/2-1}}{d^{d/2}}\right]^\beta.
$$
}When $\beta=1$, we denote $\G^{n,(1)}_\O=\G^{n}_\O$. Notice that $\G^{n}_\O F(x)$, for $x\in A_n$, is just
the Green potential of $F$, for the normalized random walk $(\S_{t,n})_t$ killed when exiting $\O$ 
(which is exactly exiting $A_n$)
starting from $x$. 

We observe that for $x,w\in A_n$
$$
\begin{array}{l}
\g_{E_n}\left(x\sqrt{n/d},w\sqrt{n/d}\right)\frac{n^{d/2-1}}{d^{d/2}}=\\
\\
=\g\left(x\sqrt{n/d},w\sqrt{n/d}\right)\frac{n^{d/2-1}}{d^{d/2}}
-\EE_{x\sqrt{n/d}}\left(\g\left(\Ss_{R_{E_n}},w\sqrt{n/d}\right)\frac{n^{d/2-1}}{d^{d/2}}\right)\\
\\
=\g\left(x\sqrt{n/d},w\sqrt{n/d}\right)\frac{n^{d/2-1}}{d^{d/2}}
-\EE_x\left(\g\left(\S_{T_\O,n}\sqrt{n/d},w\sqrt{n/d}\right)\frac{n^{d/2-1}}{d^{d/2}}\right),
\end{array}
$$
where the normalized random walk starts from $x$, and $T_\O=T_\O(\S_{\bullet,n})$ is the exiting
time from $\O$. 

For any $x,w$ we have $\g(x\sqrt{n/d},w\sqrt{n/d})\frac{n^{d/2-1}}{d^{d/2}}\to g(x,w)$, as $n=m3^{2\ell}$
converges to infinity. So, in order to prove that $\G^{n,(\beta)}_\O (F)(x)\to \GbO(F)(x)$, we need the following
Lemma, which is a consequence of the weak convergence of $(\S_{t,n})_t$ to $(B_t)_t$.

\begin{lemma} 
\label{lem:4}
For any $x,y\in \O$ we have
$$
\lim\limits_{n\to\infty} \EE_{x(n)}\left(\g\left(\S_{T_\O,n}\sqrt{n/d},y(n)\sqrt{n/d}\right)\frac{n^{d/2-1}}{d^{d/2}}\right)=
\EE_x(g(B_{\TO},y)).
$$
\end{lemma}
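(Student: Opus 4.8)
The plan is to deduce the convergence from the weak convergence of the rescaled random walks $(\S_{t,n})_t$ starting from $x(n)$ to Brownian motion $(B_t)_t$ starting from $x$, together with the regularity of $\O$. Write $\mu_n$ for the law of the exit position $\S_{T_\O,n}\sqrt{n/d}$ scaled back, i.e. of the point $\S_{T_\O,n}\in \partial$-neighborhood of $\O$, under $\PP_{x(n)}$, and $\mu$ for the law of $B_{\TO}$ under $\PP_x$ (harmonic measure on $\partial\O$ from $x$). Since $\O$ is bounded and regular, $\TO<\infty$ a.s. and $B_{\TO}\in\partial\O$; because $\O$ satisfies the exterior cone condition, every boundary point is regular, so the map $z\mapsto \EE_z(\phi(B_{\TO}))$ is continuous on $\overline\O$ for every $\phi\in\C(\partial\O)$, and the classical invariance principle for exit positions (Donsker plus regularity of the boundary, e.g. the Skorokhod-embedding or the Stroock--Varadhan martingale-problem argument) gives $\mu_n\Rightarrow\mu$ as $n=m3^{2\ell}\to\infty$. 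I would cite this as the weak-convergence input the lemma is based on.

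Next I would handle the integrand. The quantity inside the expectation is $g_n(\S_{T_\O,n}\sqrt{n/d},\,y(n)\sqrt{n/d})$ where $g_n(u,v)=\g(u,v)\frac{n^{d/2-1}}{d^{d/2}}$, and by Lemma \ref{lem:1}(ii) we have $g_n(u,v)\to g(u,v)$ locally uniformly in $u\neq v$, with the uniform bound $g_n(u,v)\le c_1\|u_{\mathrm{resc}}-v_{\mathrm{resc}}\|^{2-d}$ coming from $\g(0,x)\le c_0\|x\|^{2-d}$. Since $y\in\O$ and the exit position lies within $O(\sqrt{d/n})$ of $\partial\O$, the two arguments stay separated: there is $\delta=\mathrm{dist}(y,\partial\O)>0$ and for $n$ large $\|\S_{T_\O,n}\sqrt{n/d}\cdot\sqrt{d/n}-y(n)\sqrt{d/n}\|\ge \delta/2$, so the integrand is bounded by $c_1(\delta/2)^{2-d}<\infty$ uniformly in $n$. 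Thus the function $z\mapsto g(z,y)$ is bounded and continuous in a neighborhood of $\partial\O$ (it is smooth there), and we are integrating a (eventually uniformly close to) fixed bounded continuous function against measures converging weakly.

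Then I would combine the two pieces by a standard $3\varepsilon$ argument: write
$$
\Big|\EE_{x(n)}\big(g_n(\S_{T_\O,n}\sqrt{n/d},y(n)\sqrt{n/d})\big)-\EE_x(g(B_{\TO},y))\Big|
\le I_n + II_n,
$$
where $I_n=\EE_{x(n)}\big|g_n(\cdot,y(n)\sqrt{n/d})-g(\cdot,y)\big|$ restricted to the relevant neighborhood controls the difference between $g_n$ and the limit kernel (goes to $0$ by the local uniform convergence of Lemma \ref{lem:1}(ii) plus the fact that $y(n)\sqrt{d/n}\to y$), and $II_n=\big|\int g(z,y)\,d\mu_n(z)-\int g(z,y)\,d\mu(z)\big|\to0$ by weak convergence applied to the fixed bounded continuous test function $z\mapsto g(z,y)$ (extended continuously, and the $\mu_n$ are supported in an arbitrarily small neighborhood of $\partial\O$ for large $n$, where $g(\cdot,y)$ is continuous). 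The main obstacle is making the exit-position invariance principle $\mu_n\Rightarrow\mu$ rigorous, i.e. ensuring that the discrete walk does not ``escape'' near irregular-looking boundary pieces; this is exactly where the exterior cone condition (hence regularity) of the CO set $\O$ is used, and it is the only nontrivial ingredient — everything else is the dominated-convergence/$3\varepsilon$ bookkeeping sketched above.
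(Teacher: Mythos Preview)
Your proposal is correct and follows essentially the same approach as the paper: reduce $\g_n$ to $g$ via Lemma~\ref{lem:1}, use that $g(\cdot,y)$ is bounded and continuous near $\partial\O$ because $d(y,\partial\O)>0$, and invoke weak convergence of the exit distributions coming from Donsker plus regularity of $\O$. The only difference is that where you cite the exit-position invariance principle as a black box, the paper spells it out via a time-truncation argument (multiplying by a cutoff $\psi(\TO)$ supported on $[0,t_0+\e]$, using that $\TO$ is a.s.\ continuous for the limiting BM since $\O$ is regular, and then letting $t_0\to\infty$ along continuity points).
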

\begin{proof} 
From Lemma \ref{lem:1}, it is enough to show that
$$
\lim\limits_{n\to\infty} \EE_{x(n)}\left(g(\S_{T_\O,n},y)\right)=
\EE_x(g(B_{\TO},y)).
$$
The function $h:\partial\O\to \RR_+$ defined by $h(z)=g(z,y)$ is continuous and bounded, since
$d(y,\partial \O)>0$. In order to apply the weak convergence of $(\S_{t,n})_t$ to $(B_t)_t$, we fix 
$t_0>0$ and consider, for every $\e>0$, a continuous and bounded function $\psi:\RR_+\to [0,1]$
such that $\psi(t)=1$ for all $t\le t_0$ and $\psi(t)=0$ for all $t>t_0+\e$.
Then, 
$$
\lim\limits_{n\to\infty} \EE_{x(n)}\left(g\left(\S_{T_\O,n},y\right)\psi(\TO)\right)=
\EE_x(g(B_{\TO},y)\psi(\TO)).
$$
Here, we have used that $g(B_{\TO},y)\psi(\TO)$ is $\F_{t_0+\e}$-measurable, bounded and $B$-continuous
($\O$ is regular so $\TO(B)=T_{\overline\O}(B)$ for $\PP_x$-a.s.). This implies that
$$
\limsup\limits_{n\to\infty} \EE_{x(n)}\left(g(\S_{T_\O,n},y)\ind_{\TO\le t_0}\right)\le
\EE_x(g(B_{\TO},y)\ind_{\TO\le t_0}).
$$
Similarly, we show that
$$
\EE_x(g(B_{\TO},y)\ind_{\TO<t_0})
\le\liminf\limits_{n\to\infty} \EE_{x(n)}\left(g(\S_{T_\O,n},y)\ind_{\TO\le t_0}\right).
$$
If we take $t_0$ a continuity point for $\TO(B)$ we conclude that
$$
\begin{array}{l}
\lim\limits_{n\to\infty} \EE_{x(n)}\left(g(\S_{T_\O,n},y)\ind_{\TO\le t_0}\right)=
\EE_x(g(B_{\TO},y)\ind_{\TO\le t_0}), \hbox{ and}\\
\lim\limits_{n\to\infty} \EE_{x(n)}\left(g(\S_{T_\O,n},y)\ind_{\TO> t_0}\right)=
\EE_x(g(B_{\TO},y)\ind_{\TO> t_0}).
\end{array}
$$
This finishes the proof.
\end{proof}

It is straightforward  to generalize Lemma \ref{lem:2} and Proposition \ref{pro:2} to the present setting. Also
notice that $\g_E\le \g$ for any set $E$, then we can use the bounds developed in Section \ref{sec:1} (in particular
\eqref{eq:5}, \eqref{eq:2}) to prove the following result.

\begin{proposition} 
\label{pro:2.1} Let $\O$ be a CO, then 
for all $\beta \in [1,\frac{d}{d-2})$, $x\in \O$ and all $F\in \CK(\O)$ the following limits exist
\begin{enumerate}[(i)]
\item $\lim\limits_{n\to \infty} \G^{n,(\beta)}_\O (F)(x)=G^{(\beta)}_\O (F)(x)$;

\item $0\le \lim\limits_{n\to \infty} \int \left(\G^{n,(\beta)}_\O (F)(x)-1\right)^+ F(x) \,dx=
\int \left(G^{(\beta)}_\O (F)(x)-1\right)^+ F(x)\, dx$.
\end{enumerate}
\end{proposition}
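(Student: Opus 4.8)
The plan is to reproduce, at the level of the discretisations $\G^{n,(\beta)}_\O$, the three–step scheme already carried out over $\RR^d$ in Section~\ref{sec:1} (Proposition~\ref{pro:1}, Corollary~\ref{cor:1}, Proposition~\ref{pro:2}), the only genuinely new input being the convergence of the harmonic (killing) part supplied by Lemma~\ref{lem:4}. Fix $x\in\O$ and $F\in\CK(\O)$ with $\K=\supp F$; since $n=n_\ell=m3^{2\ell}\to\infty$ we may assume $x(n)\in A_n$ and $\K\subset\O$. For (i), I would first isolate the diagonal term $w=x(n)$ of $\G^{n,(\beta)}_\O(F)(x)$. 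Because $\g_{E_n}(j,j)\le\g(0,0)<\infty$, it is bounded by $\|F\|_\infty\,\g^\beta(0,0)\,d^{\frac d2(1-\beta)}\,n^{-\gamma}$ with $\gamma=\tfrac12\bigl(d-\beta(d-2)\bigr)$, and it tends to $0$ \emph{precisely because} $\beta<\tfrac d{d-2}$ forces $\gamma>0$ (this, together with the local integrability of $\gbO(x,\cdot)$, is where the range of $\beta$ enters this proof). The remaining sum I would write as $\int_{\RR^d}H^\O_n(y)\,dy$ for a simple function $H^\O_n$ constant on the cubes $B_\infty\bigl(w,\tfrac12\sqrt{d/n}\bigr)$, $w\in A_n\setminus\{x(n)\}$, exactly as $H_n$ was built in the derivation of~\eqref{eq:1}. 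Using $\g_{E_n}\le\g$ together with Lemma~\ref{lem:1}(ii), the computation that produced~\eqref{eq:1} gives $|H^\O_n(y)|\le c\,\|F\|_\infty\,\ind_{\K^{2\boxplus}}(y)\,\|x(n)-y\|^{\beta(2-d)}$, an $L^1(dy)$ majorant uniformly in $n$ once one notes $x(n)\to x$ (a short uniform–integrability argument near $x$ absorbs the motion of the base point). For the pointwise limit, Lemma~\ref{lem:1}(ii) gives $\g(x(n)\sqrt{n/d},w\sqrt{n/d})\,n^{d/2-1}/d^{d/2}\to g(x,y)$ and Lemma~\ref{lem:4} gives $\EE_{x(n)}\bigl(\g(\S_{T_\O,n}\sqrt{n/d},y(n)\sqrt{n/d})\,n^{d/2-1}/d^{d/2}\bigr)\to\EE_x(g(B_\TO,y))$, so the bracket converges to $\gO(x,y)$, its $\beta$–power to $\gbO(x,y)$, and the uniform continuity of $F$ disposes of the vanishing shift $x-x(n)$; hence $H^\O_n(y)\to F(y)\gbO(x,y)$ for every $y\neq x$, and dominated convergence yields $\G^{n,(\beta)}_\O(F)(x)\to\int F(y)\gbO(x,y)\,dy=G^{(\beta)}_\O(F)(x)$.

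For (ii), I would combine (i) with an $L^1$–domination and with the discrete positivity. By the $\O$–analogues of~\eqref{eq:5} and~\eqref{eq:2} — which again follow from the Section~\ref{sec:1} estimates through $\g_{E_n}\le\g$ — one has $|\G^{n,(\beta)}_\O(F)(y)|\le C(F)$ uniformly in $n$ and $y$, so $\bigl(\G^{n,(\beta)}_\O(F)(\cdot)-1\bigr)^+F(\cdot)$ is dominated on the compact set $\K$; together with the pointwise convergence from (i), dominated convergence gives $\int\bigl(\G^{n,(\beta)}_\O(F)(x)-1\bigr)^+F(x)\,dx\to\int\bigl(G^{(\beta)}_\O(F)(x)-1\bigr)^+F(x)\,dx$, which is the asserted equality. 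To see the limit is $\ge0$, I would repeat the proof of Corollary~\ref{cor:1} with $H^\O_n$ in place of $H_n$: the integral differs from the lattice sum $\sum_{x\in A_n}\bigl(\G^{n,(\beta)}_\O(F)(x)-1\bigr)^+F(x)(d/n)^{d/2}$ by a quantity of the form $C(F)\,\hbox{osc}\bigl(\tfrac12\sqrt{d/n}\bigr)\,\mu(\K^{2\boxplus})\to0$, and that lattice sum is $\ge0$ by the $\O$–version of Proposition~\ref{pro:2}. For the latter one checks, generalising Lemma~\ref{lem:2}, that $U_{x,w}=c(n,d,\beta)\,\g_{E_n}(x\sqrt{n/d},w\sqrt{n/d})$ ($x,w\in A_n$) is a nonsingular symmetric potential matrix — it is $(\II-P)^{-1}$ for $P$ the substochastic, reversible transition matrix of the normalised walk killed on leaving $E_n$, block–diagonal over the finitely many connected components of $\O$ — so $U^{(\beta)}$ is again such a matrix by Proposition~\ref{pro:4}, and Proposition~\ref{pro:3} gives $\langle(U^{(\beta)}v-1)^+,v\rangle\ge0$ for $v_x=F(x)$. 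Letting $n\to\infty$ concludes both assertions.

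The substantive analysis was already done in Section~\ref{sec:1} and in Lemma~\ref{lem:4}, so what remains is largely bookkeeping; the points that need real care are (a) the interplay between the moving base point $x(n)$ and the shift $x-x(n)$ built into the definition of $\G^{n,(\beta)}_\O$, which must be controlled uniformly in $n$ by the oscillation of $F$ (and by a uniform–integrability estimate near $x$) so that a single dominating function serves for all $n$; and (b) the verification that the killed–walk Green matrix $\g_{E_n}$ is a genuine \emph{nonsingular} symmetric potential even when $\O$, hence $E_n$, is disconnected, so that Propositions~\ref{pro:3} and~\ref{pro:4} apply on each block.
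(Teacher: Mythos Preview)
Your proposal is correct and follows precisely the route the paper takes: the paper's own proof is a one-sentence remark that Lemma~\ref{lem:2} and Proposition~\ref{pro:2} generalise verbatim to the killed walk on $E_n$, and that the bound $\g_{E_n}\le\g$ lets one reuse the estimates~\eqref{eq:5},~\eqref{eq:2} of Section~\ref{sec:1}, with Lemma~\ref{lem:4} providing the convergence of the harmonic part. You have simply spelled out in detail what the paper leaves implicit, including the two points (moving base point $x(n)$ and possible disconnectedness of $E_n$) that genuinely require a moment's thought.
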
 

\subsection{General bounded regular open sets}

The purpose of this section is to generalize Proposition \ref{pro:2.1} to the case of a general bounded regular
open set $\O\subset \RR^d$. To this end, we take the following approximations of $\O$. 
For every positive $m$ consider
$$
\begin{array}{l}
A^i_{m}=\{x\in \sqrt{d/m}\,  \ZZ^d: d_\infty(x,\O^c)>\sqrt{d/m}\};\\
A^e_{m}=\{x\in \sqrt{d/m}\,  \ZZ^d: d_\infty(x,\O)<\sqrt{d/m}\}.
\end{array}
$$
We assume that $m_0$ is large enough so $A^i_{m_0}$ is not empty, and consider only integers
of the form $m=m_\ell=m_0 3^{2\ell}$.
Notice that $A^i_m\subset A^e_m$ and
$A^i_m\subset A^i_n$ if $m\le n$. 
We define the CO sets
$$
\begin{array}{l}
\O_m^i=int\left(\cup_{x\in A^i_{m}} \overline{B_\infty(x,1/2\sqrt{d/m})}\right)\subset \O;\\
\O_m^e=int\left(\cup_{x\in A^e_{m}} \overline{B_\infty(x,1/2\sqrt{d/m})}\right)\supset \overline\O,
\end{array}
$$
where here $i,e$ means interior and exterior respectively (similarly for $A^i_m,A^e_m$). We also 
denote by $I_m=\sqrt{m/d}\, A^i_m, E_m=\sqrt{m/d}\, A^e_m$. We point out that 
$\O_m^i\subset \O_{m+1}^i\subset \O_{n+1}^e\subset \O_{n}^e$, for all $m,n$.

\smallskip

For any fixed $m$, Lemma \ref{lem:4} can be applied to 
$\O_m^e, \O_m^i$. For $x,y \in \O_m^i$ and  $m\le n$, we obviously have
$$
\G^n_{\O_m^i} (\ind_{\{y(n)\}})(x(n))\le \G^n_{\O_n^e} (\ind_{\{y(n)\}})(x(n))\le 
\G^n_{\O_m^e} (\ind_{\{y(n)\}})(x(n))
$$
which gives
$$
\EE_{x(n)}\left(\g\left(\S_{T_{\O_m^i},n}\sqrt{n/d}\, ,y(n)\sqrt{n/d}\right)\right)\ge 
\EE_{x(n)}\left(\g\left(\S_{T_{\O_n^e},n}\sqrt{n/d}\, ,y(n)\sqrt{n/d}\right)\right).
$$
Thus,
$$
\begin{array}{ll}
\EE_{x}(g(B_{T_{\O_m^i}},y))\hspace{-0.3cm}&=
\lim\limits_{n\to\infty} \frac{n^{d/2-1}}{d^{d/2}} 
\EE_{x(n)}\left(\g\left(\S_{T_{\O_m^i},n}\,\sqrt{n/d}\, ,y(n)\sqrt{n/d}\right)\right)\\
\\
\hspace{-0.3cm}&\ge \limsup\limits_{n\to\infty} 
\frac{n^{d/2-1}}{d^{d/2}}\EE_{x(n)}\left(\g\left(\S_{T_{\O_n^e},n}\,\sqrt{n/d}\, ,y(n)\sqrt{n/d}\right)\right)\\
\\
\hspace{-0.3cm}&\ge \liminf\limits_{n\to\infty} 
\frac{n^{d/2-1}}{d^{d/2}}\EE_{x(n)}\left(\g\left(\S_{T_{\O_n^e},n}\, \sqrt{n/d}\, ,y(n)\sqrt{n/d}\right)\right)\\
\\
\hspace{-0.3cm}&\ge \lim\limits_{n\to\infty} 
\frac{n^{d/2-1}}{d^{d/2}}\EE_{x(n)}\left(\g\left(\S_{T_{\O_m^e},n}\, \sqrt{n/d}\, ,y(n)\sqrt{n/d}\right)\right)\\
\\
\hspace{-0.3cm}&=\EE_{x}(g(B_{T_{\O_m^e}},y)).
\end{array}
$$
Since $\O_m^i\uparrow \O$ and $\O_m^e\downarrow \overline \O$, as $m\to \infty$, 
then the regularity of $\O$ is used to show the analogous of Lemma \ref{lem:4}: For all $x,y \in \O$, with $a=e,i$
\begin{equation}
\label{eq:4.5}
\lim\limits_{n\to\infty} \frac{n^{d/2-1}}{d^{d/2}} \EE_{x(n)}\left(\g\Big(\S_{T_{\O_n^a},n}
\sqrt{n/d},y(n)\sqrt{n/d}\Big)\right)=\EE_x(g(B_\TO,y)).
\end{equation}
We have essentially shown the following result.
\begin{proposition} 
\label{pro:2.2} Let $\O$ be a bounded regular open set. Then,
for all $\beta \in [1,\frac{d}{d-2})$, $x\in \O$ and all $F\in \C_\K(\O)$ the following limits hold
\begin{enumerate}[(i)]
\item $\lim\limits_{n\to \infty} \G^{n,(\beta)}_{\O_n^e} (F)(x)=G^{(\beta)}_\O (F)(x)$;

\item $0\le \lim\limits_{n\to \infty} \int_\O \left(\G^{n,(\beta)}_{\O_n^e} (F)(z)-1\right)^+ F(z) \,dz=
\int _\O\left(G^{(\beta)}_\O (F)(z)-1\right)^+ F(z)\, dz$.
\end{enumerate}
\end{proposition}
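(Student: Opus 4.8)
\medskip
\noindent The plan is to run the argument behind Proposition~\ref{pro:2.1} essentially verbatim, with the fixed cubic set replaced by the moving exterior approximations $\O_n^e$ and with \eqref{eq:4.5} invoked in place of Lemma~\ref{lem:4}. Since $\g_{E_n}\le\g$ pointwise for every $n$, all quantitative bounds of Section~\ref{sec:1} --- in particular the oscillation estimate \eqref{eq:5} and the uniform bound \eqref{eq:2} --- hold verbatim for $\G^{n,(\beta)}_{\O_n^e}$ with the same constants. It is convenient to write $K_n(a,b)=\g_{E_n}\big(a\sqrt{n/d},b\sqrt{n/d}\big)\frac{n^{d/2-1}}{d^{d/2}}$ for $a,b\in A_n$, so that $\G^{n,(\beta)}_{\O_n^e}(F)(x)=\sum_{w\in A_n}F\big(w+x-x(n)\big)\big(\tfrac{d}{n}\big)^{d/2}K_n(x(n),w)^{\beta}$ for $F\in\C_\K(\O)$.

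\emph{Part (i).} Using $K_n(x(n),w)=\g(x(n)\sqrt{n/d},w\sqrt{n/d})\frac{n^{d/2-1}}{d^{d/2}}-\EE_{x(n)}\big(\g(\S_{T_{\O_n^e},n}\sqrt{n/d},w\sqrt{n/d})\frac{n^{d/2-1}}{d^{d/2}}\big)$, Lemma~\ref{lem:1} sends the first term to $g(x,y)$ whenever $w\to y$, while \eqref{eq:4.5} (for the exterior family) sends the second term to $\EE_x(g(B_\TO,y))$; hence $K_n(x(n),w)\to\gO(x,y)$ as $w\to y$. The diagonal contribution $w=x(n)$ is bounded by $F(x)\,\g^\beta(0,0)\,d^{\frac{d}{2}(1-\beta)}\,n^{-\gamma}$ with $\gamma=\frac{d-\beta(d-2)}{2}>0$, hence vanishes in the limit. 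Writing the off-diagonal part as $\int_{\RR^d}\tilde H_n(y)\,dy$ for a simple function $\tilde H_n$ built exactly as in Section~\ref{sec:1}, the inequality $\g_{E_n}\le\g$ together with Lemma~\ref{lem:1}(ii) yields the $n$-uniform domination $\tilde H_n(y)\le c\,\|F\|_\infty\,\ind_{\K^{2\boxplus}}(y)\,\|x-y\|^{\beta(2-d)}\in L^1(dy)$ --- integrability at $y=x$ being precisely the hypothesis $\beta<\frac{d}{d-2}$ --- while $\tilde H_n(y)\to F(y)\,\gbO(x,y)$ pointwise and $\tilde H_n$ is supported in $\K^{2\boxplus}$ for every $n$. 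Dominated convergence then gives $\G^{n,(\beta)}_{\O_n^e}(F)(x)\to\int_\O F(y)\,\gbO(x,y)\,dy=G^{(\beta)}_\O(F)(x)$.

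\emph{Part (ii).} I would combine three inputs. (a) \emph{Nonnegativity of the discrete functional:} Lemma~\ref{lem:2} extends to show that, for fixed $n$, $U_{x,w}=c(n,d,\beta)\,\g_{E_n}(x\sqrt{n/d},w\sqrt{n/d})$ on $A_n$ is a nonsingular symmetric potential matrix --- being the Green matrix of the simple walk killed off $E_n$, i.e.\ a principal submatrix of the potential of the walk on a large box absorbed at its boundary --- so Propositions~\ref{pro:4} and~\ref{pro:3} give, exactly as in Proposition~\ref{pro:2}, $\sum_{x\in A_n}\big(\G^{n,(\beta)}_{\O_n^e}(F)(x)-1\big)^+F(x)\big(\tfrac{d}{n}\big)^{d/2}\ge0$. (b) \emph{Riemann-sum comparison:} the oscillation estimate \eqref{eq:5} for $\G^{n,(\beta)}_{\O_n^e}$ reruns the proof of Corollary~\ref{cor:1}, giving $\int_\O\big(\G^{n,(\beta)}_{\O_n^e}(F)(z)-1\big)^+F(z)\,dz=\sum_{x\in A_n}\big(\G^{n,(\beta)}_{\O_n^e}(F)(x)-1\big)^+F(x)\big(\tfrac{d}{n}\big)^{d/2}+o(1)$, so this integral is $\ge o(1)$. (c) \emph{Limit in the integral:} by Part (i) the integrand converges pointwise to $\big(G^{(\beta)}_\O(F)(z)-1\big)^+F(z)$, and the bound $|\G^{n,(\beta)}_{\O_n^e}(F)|\le3\,\Gamma(F)\,\|F\|_\infty$ (from \eqref{eq:2} via $\g_{E_n}\le\g$) together with the compact support of $F$ supplies the domination, so the integral converges to $\int_\O\big(G^{(\beta)}_\O(F)(z)-1\big)^+F(z)\,dz$. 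Inputs (a)--(c) together give both the equality and the sign claimed in (ii).

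\emph{Where the difficulty sits.} The analytic content is concentrated entirely in \eqref{eq:4.5}; once that is granted, the rest is bookkeeping. The subtlety to watch is that the lattice $A_n$ and the domain $\O_n^e$ both move with $n$, so one must make the domination in Part (i) genuinely uniform in $n$ --- which is exactly what $\g_{E_n}\le\g$ buys, reducing the required estimate to the $n$-uniform one already established in Section~\ref{sec:1}. The genuinely delicate limit is the pointwise convergence $K_n(x(n),w)\to\gO(x,y)$ as $w\to y$, which fuses the lattice asymptotics of Lemma~\ref{lem:1} for the free Green kernel with the weak-convergence statement \eqref{eq:4.5} for the harmonic correction.
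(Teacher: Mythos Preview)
Your proposal is correct and follows essentially the same route as the paper: the paper derives \eqref{eq:4.5} by sandwiching $\O_n^e$ between the fixed cubic approximations $\O_m^i$ and $\O_m^e$, and then states Proposition~\ref{pro:2.2} as ``essentially shown'' by rerunning the Proposition~\ref{pro:2.1} argument with \eqref{eq:4.5} replacing Lemma~\ref{lem:4} and with the domination $\g_{E_n}\le\g$ feeding the Section~\ref{sec:1} estimates \eqref{eq:5}, \eqref{eq:2}. Your write-up in fact supplies more detail than the paper does --- in particular the explicit decomposition into diagonal term, off-diagonal simple function, and the three inputs (a)--(c) for part~(ii) --- but the skeleton is identical.
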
 

The positive operator $G^{(\beta)}_\O$ can be extended to the set $\BB_b=\BB_b(\O)$ 
of bounded measurable functions, because
if $f\in \BB_b$, we have
\begin{equation}
\label{eq:6}
\begin{array}{ll}
|G^{(\beta)}_\O (f)(x)|\le \int_\O |f(y)| \gbO (x,y) \, dy
\le \|f\|_\infty \int_\O g^\beta (x,y) \, dy\\
\\
\le \|f\|_\infty \int_{B(0,R)} g^\beta (0,y) \, dy
=D(d,\beta,\O)\|f\|_\infty,
 \end{array}
\end{equation}
where $D(d,\beta,\O)=\left(\frac{\Gamma(d/2-1)}{2\pi^{d/2}}\right)^\beta S(d) \frac{R^{d-\beta(d-2)}}{{d-\beta(d-2)}}$,
with $S(d)$ the surface of the (Euclidean) unit ball in $\RR^d$ and $R=diameter(\O)$. 

It is also strightforward to show that $\GbO$ maps $\BB_b$ into $\C_0(\O)$. 
Indeed, take any $x\in \O$, $\e>0$ small enough
and $(x_n)_n\subset \O$ such that $x_n\to x$. Let us start with
$$
\GbO(f)(x_n)=\int_{B(x,\e)}f(y) \gbO(x_n,y)\, dy +\int_{y: d(y,x)>\e} f(y) \gbO(x_n,y)\, dy.
$$
For every fixed $\e$ the second integral converges to $\int_{y: d(y,x)>\e} f(y) \gbO(x,y)\, dy$.

On the other hand $|\int_{B(x,\e)}f(y) \gbO(x_n,y)\, dy|\le \|f\|_\infty \int_{B(x,\e)} g^\beta (x_n,y)\,dy$, which
is uniformly bounded by $\|f\|_\infty \int_{B(x,2\e)} g^\beta (x,y)\,dy$, if $\|x-x_n\|\le \e$. This last integral
converges to $0$ when $\e\downarrow 0$,  proving
that $\GbO(f)$ is a continuous function on $\O$. We already know it is also a bounded function. In particular
$G^{(\beta)}_\O:\C_b(\O)\to \C_b(\O)$ is a bounded positive linear operator.
Now, let us show that $\GbO(f)\in \C_0(\O)$, for any $f\in \BB_b$. For that purpose, we notice that 
$$
|\limsup\limits_{z\to \partial \O} \GbO(f)(z)|\le \|f\| \limsup\limits_{z\to \partial \O} \int_\O \gbO(z,y) \, dy,
$$ 
which is finite (it suffices to replace $\gbO$ by $g^\beta$). 
So, it is enough to show that $a=\limsup\limits_{z\to \partial \O} \int_\O \gbO(z,y) \, dy$ is zero.
We take any sequence 
$(z_k)_k \subset \O$ such that $z_k \to \partial \O$ and
$$
a=\lim\limits_{k \to \infty} \int_\O \gbO(z_{k},y) \, dy.
$$
Since, $\O$ is bounded we can assume further that $z_k \to z\in \partial \O$.
The regularity of $\O$ shows that for all $y\in \O$ we have $ g^\beta_\O(z_{k},y)\to 0$, with $k\to \infty$.

For $\e>0$, we have 
$$
\int_\O \gbO(z_{k},y) \, dy \le \int_{\O\cap  \|z-y\|\le 2\e} \gbO(z_{k},y) \, dy+
\int_{\O\cap  \|z-y\|>2 \e} \gbO(z_{k},y) \, dy.
$$
The second term converges to $0$ from the Dominated Convergence Theorem. Indeed, 
the regularity of $\O$ shows that for all $y\in \O$ we have $ g^\beta_\O(z_{k},y)\to 0$, when $k\to \infty$.
On the other hand, for large $k$ we can assume that $\|z_k-y\|>\e$ and then
$\gbO(z_{k},y)\le g^\beta(z_{k},y)\le C(d)^\beta (\e)^{\beta(2-d)}$, providing the desired domination.

For the first term, if $k$ is large enough such that $\|z-z_k\|\le \e$, we get 
$$
\int_{\O\cap  \|z-y\|\le 2\e} \gbO(z_{k},y) \, dy\le \int_{\O\cap  \|z-y\|\le 2\e} g^\beta(z_{k},y) \, dy
\le \int_{ \|y\|\le 3\e} g^\beta(0,y) \, dy.
$$
We conclude that $a\le \lim\limits_{\e\downarrow 0} \int_{ \|y\|\le 3\e} g^\beta(0,y) \, dy=0$,
and the claim is shown. 

\medskip

So far we have proved that $\GbO:\BB_b(\O)\to \BB_b(\O)$ is a bounded positive linear operator, which also
satisfies $\GbO:\BB_b(\O)\subset \C_0(\O)$. We now extend $(ii)$ in Proposition \ref{pro:2.2} for $f\in \BB_b(\O)$,
that is $\GbO$ satisfies the CMP in $\BB_b(\O)$.
\begin{lemma}
\label{lem:2.4}
Assume $f\in \BB_b(\O)$ then
$$
0\le \int_\O \left(G^{(\beta)}_\O (f)(x)-1\right)^+ f(x)\, dx.
$$
\end{lemma}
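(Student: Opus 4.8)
The idea is to extend the inequality from $\C_\K(\O)$ (where it holds by Proposition~\ref{pro:2.2}(ii)) to all of $\BB_b(\O)$ by a density/approximation argument, using the regularizing properties of $\GbO$ that were just established. First I would reduce to the case of a simple function (or even an indicator-combination) by monotone class considerations: both sides of the asserted inequality are reasonably continuous in $f$ for suitable modes of convergence, since $\GbO$ is a bounded operator on $\BB_b(\O)$ (by \eqref{eq:6}) and the map $f\mapsto \int_\O (\GbO f(x)-1)^+ f(x)\,dx$ is continuous along uniformly bounded sequences converging in $L^1(\O,dx)$, because $\GbO f\to \GbO g$ pointwise and boundedly whenever $f\to g$ in that sense, and $(\cdot-1)^+$ is Lipschitz.

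Next, for a fixed $f\in\BB_b(\O)$, I would approximate $f$ by a sequence $(f_k)\subset\C_\K(\O)$ with $\|f_k\|_\infty\le\|f\|_\infty+1$ and $f_k\to f$ in $L^1(\O,dx)$ (using that $\O$ is bounded, hence $f\in L^1$, and that $\C_\K(\O)$ is dense in $L^1(\O,dx)$). By Proposition~\ref{pro:2.2}(ii) applied to each $f_k$ we have
$$
0\le \int_\O \left(\GbO(f_k)(x)-1\right)^+ f_k(x)\,dx.
$$
Then I would pass to the limit $k\to\infty$. For the term $\GbO(f_k)(x)\to\GbO(f)(x)$: this is pointwise convergence for each $x\in\O$, since $\gbO(x,\cdot)\le g^\beta(x,\cdot)\in L^1(\O,dx)$ dominates and $f_k\to f$ in $L^1$ along a subsequence a.e., or more cleanly $|\GbO(f_k)(x)-\GbO(f)(x)|\le \int_\O |f_k-f|\,g^\beta(x,y)\,dy$, and one needs this to go to $0$ — here I would be slightly careful: $L^1$ convergence of $f_k$ to $f$ gives convergence of $\int |f_k-f| g^\beta(x,\cdot)$ to $0$ only if $g^\beta(x,\cdot)$ is bounded near $x$, which it is not. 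So I would instead demand $f_k\to f$ both in $L^1$ and boundedly a.e.\ (Lusin-type approximation), split the integral into a small ball around $x$ and its complement, bound the small-ball contribution uniformly via $\|f\|_\infty\int_{B(x,\e)}g^\beta$, and use dominated convergence on the complement. Combined with the uniform bound $\|f_k\|_\infty\le\|f\|_\infty+1$ and the integrability bound \eqref{eq:6}, dominated convergence then transfers the inequality, since $(\GbO f_k-1)^+ f_k\to(\GbO f-1)^+ f$ pointwise a.e.\ and is dominated by $(D\|f\|+1+1)(\|f\|+1)\in L^1(\O,dx)$ as $\O$ has finite Lebesgue measure.

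**Main obstacle.** The delicate point is precisely the pointwise (a.e.) convergence $\GbO(f_k)\to\GbO(f)$ under a mere $L^1$ approximation, because the kernel $\gbO(x,\cdot)$ has a non-integrable-looking singularity that is in fact integrable (since $\beta(d-2)<d$) but not bounded. The resolution is to choose the approximating sequence more carefully — bounded and convergent almost everywhere, not just in $L^1$ — and to exploit the equicontinuity-type estimate that $\sup_x\int_{B(x,\e)}g^\beta(0,y)\,dy\to 0$ as $\e\downarrow0$, which already appeared in the verification that $\GbO$ maps $\BB_b(\O)$ into $\C_0(\O)$. With that uniform smallness in hand, the splitting argument closes, and the rest is routine dominated convergence. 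An alternative, possibly cleaner, route is to run a monotone class / functional monotone argument directly: the collection of $f\in\BB_b(\O)$ for which the inequality holds is closed under bounded monotone limits (by monotone convergence on $\GbO$ and Fatou/dominated convergence on the outer integral) and contains $\C_\K(\O)$, hence contains all bounded Borel functions; I would likely present this version as the primary proof, falling back on the explicit approximation only where the monotone-limit step needs the $(\cdot-1)^+ f$ combination handled with care (it is not monotone in $f$, so one reduces first to $f=\ind_A$ and then to finite linear combinations before taking monotone limits of sets).
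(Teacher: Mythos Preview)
Your approach is correct and essentially the same as the paper's: approximate $f$ by $F_k\in\CK(\O)$ with $\|F_k\|_\infty\le\|f\|_\infty$ and $\|F_k-f\|_1\to 0$, invoke Proposition~\ref{pro:2.2}(ii) for each $F_k$, and pass to the limit. The one simplification you missed is that the splitting estimate you describe actually yields \emph{uniform} convergence $\|\GbO(F_k)-\GbO(f)\|_\infty\to 0$ directly from $L^1$ convergence plus the uniform $\|\cdot\|_\infty$ bound---on the complement of $B(x,\e)$ the kernel is bounded by $(C(d)\e^{2-d})^\beta$, so that piece is controlled by $C_\e\|F_k-f\|_1$ with no need for a.e.\ convergence or Lusin---which makes your hedging about ``demanding $f_k\to f$ boundedly a.e.'' and the alternative monotone-class route unnecessary.
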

\begin{proof} Similar to \eqref{eq:6}, for any $f\in \BB_b(\O)$ and every $\e>0$, we have
$$
|\GbO(f)(x)|\le \|f\|_\infty \int_{B(0,\e)} g^\beta(0,y)+\left(\frac{C(d)}{\e^{d-2}}\right)^\beta \|f\|_1.
$$
So, if $(F_k)_k\subset \CK(\O)$ is a sequence of functions such that $\|F_k-f\|_1\to 0$
and $\|F_k\|_\infty \le \|f\|_\infty$, then
$$
\|\GbO(f)-\GbO(F_k)\|_\infty\le 2\|f\|_\infty \int_{B(0,\e)} g^\beta(0,y)+
\left(\frac{C(d)}{\e^{d-2}}\right)^\beta \|F_k-f\|_1.
$$
Therefore, taking limits in $k$ and then in $\e$, we conclude that 
$(\GbO(F_k))_k$ converges uniformly to $\GbO(f)$. The rest of the proof is an application of 
Dominated Convergence Theorem
and Proposition \ref{pro:2.2} $(ii)$.
\end{proof}

In summary, $\GbO:\BB_b(\O)\to \BB_b(\O)$ is a positive, bounded  linear operator 
and from the previous Lemma it satisfies the CMP in $\BB_b(\O)$. Hence,
$\GbO$ is the potential of a unique contraction resolvent
$\U=(U^\lambda)_{\lambda\ge 0}$, defined on $\BB_b(\O)$. In particular, 
for each $\lambda$ we have $U^\lambda:\BB_b(\O)\to \BB_b(\O)$ is a bounded
positive linear operator such that $\|\lambda U^\lambda\|\le 1$ and
$$
U^0=\GbO.
$$
See for example Lemma 4.1.9 in \cite{MarcusRosen2006} and Remark 4.1.10 applied to the
Banach space $(\BB_b(\O),\|\,\|_\infty)$. We recall that $G^{(\beta)}_\O(\BB_b(\O))\subset 
\C_0(\O)$, and the way $\U$ is constructed implies that for all $\lambda$ one has
$U^\lambda(\BB_b(\O))\subset \C_0(\O)$ (recall that $U^\lambda=\GbO(\II+\lambda \GbO)^{-1}$).
We prove now an extra property of this resolvent.

\begin{proposition} 
\label{pro:2.5}
The resolvent $\U$ is continuous on $\lambda$, that is,
for all $\lambda\ge 0$
$$
\lim\limits_{\lambda'\to \lambda} \|U^\lambda-U^{\lambda'}\!\|=0.
$$
In case $\lambda=0$, the limit is taken as $\lambda'>0$. In particular, for all $f\in \BB_b(\O)$ we have
$$
\|U^\lambda(f)-\GbO(f)\|_\infty \to 0,
$$
as $\lambda\downarrow 0$.
\end{proposition}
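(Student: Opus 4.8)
The plan is to reduce everything to the resolvent equation combined with the contraction bound $\|\lambda U^\lambda\|\le 1$ and the already-known boundedness of $\GbO$. Since $\U=(U^\lambda)_{\lambda\ge 0}$ is a resolvent on the Banach space $(\BB_b(\O),\|\cdot\|_\infty)$ with $U^0=\GbO$, it obeys the resolvent equation $U^{\lambda'}-U^\lambda=(\lambda-\lambda')\,U^{\lambda'}U^\lambda$ for all $\lambda,\lambda'\ge 0$; specialising to $\lambda'=0$ and using $U^\lambda=\GbO(\II+\lambda\GbO)^{-1}$ (so that $U^\lambda(\II+\lambda\GbO)=\GbO$ and $U^\lambda$ commutes with $\GbO$), this reads $\GbO-U^\lambda=\lambda\,\GbO U^\lambda$, i.e. $U^\lambda=\GbO(\II-\lambda U^\lambda)$.

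First I would establish a bound on $\|U^\lambda\|$ uniform over all $\lambda\ge 0$. The estimate $\|U^\lambda\|\le 1/\lambda$ coming directly from $\|\lambda U^\lambda\|\le 1$ degenerates as $\lambda\downarrow 0$, so instead I use the identity above: $\|U^\lambda\|\le \|\GbO\|\,\|\II-\lambda U^\lambda\|\le \|\GbO\|\,(1+\|\lambda U^\lambda\|)\le 2\|\GbO\|$. By \eqref{eq:6} we have $\|\GbO\|\le D(d,\beta,\O)<\infty$, so $M\df 2D(d,\beta,\O)$ satisfies $\|U^\lambda\|\le M$ for every $\lambda\ge 0$.

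Then the norm continuity follows immediately from the resolvent equation applied to a general pair: $\|U^{\lambda'}-U^\lambda\|=|\lambda-\lambda'|\,\|U^{\lambda'}U^\lambda\|\le M^2\,|\lambda-\lambda'|$, which goes to $0$ as $\lambda'\to\lambda$ (and as $\lambda'\downarrow 0$ in the case $\lambda=0$). The final assertion is then a direct consequence: for $f\in\BB_b(\O)$ one has $\|U^\lambda(f)-\GbO(f)\|_\infty\le \|U^\lambda-U^0\|\,\|f\|_\infty\le M^2\lambda\,\|f\|_\infty\to 0$ as $\lambda\downarrow 0$.

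I do not expect any real obstacle here; the single point worth care is that the contraction property alone cannot control $U^\lambda$ near $\lambda=0$, which is exactly why one first compares $U^\lambda$ to $U^0=\GbO$ — whose operator norm was bounded in \eqref{eq:6} — before invoking the resolvent identity. One should also simply confirm that the $\lambda'=0$ form of the resolvent equation is legitimate, which is immediate from $U^\lambda(\II+\lambda\GbO)=\GbO$ together with the fact that $U^\lambda$, being a function of $\GbO$, commutes with it.
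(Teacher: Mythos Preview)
Your proof is correct and follows essentially the same approach as the paper: both arguments combine the resolvent equation with the bound $\|\GbO\|\le D$ from \eqref{eq:6} and the contraction property $\|\lambda U^\lambda\|\le 1$. The only cosmetic difference is that the paper treats $\lambda=0$ and $\lambda>0$ separately (using positivity, $U^{\lambda'}(U^0|f|)\le U^0(U^0|f|)$, at $\lambda=0$), whereas you first extract the uniform bound $\|U^\lambda\|\le 2D$ from $U^\lambda=\GbO(\II-\lambda U^\lambda)$ and then handle all $\lambda$ at once, yielding a global Lipschitz estimate.
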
 
\begin{proof}
We start with the case $\lambda=0$. The resolvent equation shows that
$U^{\lambda'}\le U^0$ on $\BB_b(\O)$. Using inequality \eqref{eq:6} and the resolvent equation, 
we get for all $x\in \O$
$$
|U^0(f)(x)-U^{\lambda'}(f)(x)|\le \lambda' U^{\lambda'} (U^0(|f|))(x)
\le \lambda' U^{0} (U^0(|f|))(x)\le \lambda' D^2 \|f\|_\infty,
$$
and the result is shown in this case. 

For $\lambda>0$, the result is a consequence of the resolvent equation and the fact that the resolvent
is a contraction
$$
\|U^\lambda-U^{\lambda'}\|\le \frac{|\lambda-\lambda'|}{\lambda\lambda'}\|\lambda U^\lambda\| 
\|\lambda' U^{\lambda'}\| \le \frac{|\lambda-\lambda'|}{\lambda\lambda'}.
$$
\end{proof}

Our next step is to show that this resolvent comes from a Ray process (see Chapter 4 in 
\cite{MarcusRosen2006}).
To this end, we recall that a function $f\in \BB_b^+$ is supermedian if 
$$
\alpha U^\alpha (f) (x)\le f(x),
$$
for all $x\in \O$ and some (all) $\alpha>0$. We denote by $\mathcal{M}^+$ the set of supermedian
functions and $\mathcal{M}=\mathcal{M}^+-\mathcal{M}^+$ the linear space generated. It is well known
that $\mathcal{M}^+$ is closed under monotone pointwise convergence,  $\mathcal{M}$ is a lattice
and contains the constants.  Using the resolvent equation it is straightforward to show that
$\GbO(\BB_b^+)\subset \mathcal{M}^+$.
The following technical result is needed.

\begin{lemma} 
\label{lem:3}
The set $\mathcal{H}=\mathcal{M}\cap \C_0(\O)$ is dense in $\C_0(\O)$.
\end{lemma}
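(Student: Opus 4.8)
The plan is to show that $\mathcal{H}=\mathcal{M}\cap\C_0(\O)$ is dense in $\C_0(\O)$ by exhibiting, inside $\mathcal{H}$, a subspace that separates points and vanishes nowhere simultaneously, and then invoking a Stone--Weierstrass-type / lattice argument for $\C_0$. The natural candidates in $\mathcal{H}$ are the potentials $\GbO(F)$ with $F\in\C_\K(\O)$ nonnegative: by the computations preceding the lemma we already know $\GbO$ maps $\BB_b(\O)$ into $\C_0(\O)$, and the resolvent equation gives $\alpha U^\alpha(\GbO F)=\GbO F-\GbO(\alpha U^\alpha F)\le\GbO F$ since $\alpha U^\alpha F\ge0$, so $\GbO F\in\mathcal{M}^+\subset\mathcal{M}$. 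Hence every such $\GbO F$ lies in $\mathcal{H}$, and differences $\GbO F_1-\GbO F_2$ lie in $\mathcal{H}$ as well because $\mathcal{M}$ is a linear space.

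First I would record that $\mathcal{H}$ is a linear subspace of $\C_0(\O)$ which is also a sublattice: $\mathcal{M}$ is a lattice and $\C_0(\O)$ is a lattice, and the intersection of two lattices of functions is a lattice (if $f,g\in\mathcal H$ then $f\vee g,f\wedge g\in\mathcal M$ and are again in $\C_0(\O)$). Next I would check the two pointwise conditions needed to apply the lattice version of Stone--Weierstrass on the locally compact space $\O$ (density in $\C_0(\O)$): (a) for each $x\in\O$ there is $h\in\mathcal H$ with $h(x)\neq0$; (b) for each pair $x\neq y$ in $\O$ there is $h\in\mathcal H$ with $h(x)\neq h(y)$. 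For (a), pick $F\in\C_\K(\O)$ nonnegative with $F$ supported near $x$ and $F(x)>0$; then $\GbO F(x)=\int F(w)\gbO(x,w)\,dw>0$ since $\gbO$ is strictly positive on $\O\times\O$. For (b), choose $F$ supported in a small ball around $x$ that avoids $y$: then $\GbO F(x)>0$ while $\GbO F(y)=\int F(w)\gbO(y,w)\,dw$ is generally different; if it happened to coincide, one adjusts the support or uses the strict monotonicity of $w\mapsto\gbO(x,w)$ near $w=x$ versus its boundedness near $w=y$ to force $\GbO F(x)\neq\GbO F(y)$ — more robustly, the functions $x\mapsto\gbO(x,w_0)$ for fixed $w_0$ already separate points of $\O$, and these are limits of $\GbO(F_k)$ for $F_k$ an approximate identity at $w_0$ (the uniform convergence here is exactly the estimate used in Lemma \ref{lem:2.4}), so the closure of $\mathcal H$ in $\C_0(\O)$ separates points.

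The main obstacle I anticipate is point (b): verifying that $\{\GbO F: F\in\C_\K(\O)\}$ (or its uniform closure) genuinely separates points of $\O$, rather than merely being nonzero. The clean way around it is to note that $\GbO\delta_{w_0}(\cdot)=\gbO(\cdot,w_0)$ can be realized as a uniform limit of $\GbO(F_k)$ along an approximate identity $F_k\uparrow$ at $w_0$ — the domination estimate $|\GbO(f)(x)|\le\|f\|_\infty\int_{B(0,\e)}g^\beta+(C(d)/\e^{d-2})^\beta\|f\|_1$ from the proof of Lemma \ref{lem:2.4} gives the needed uniform control — so the uniform closure $\overline{\mathcal H}$ contains all the functions $x\mapsto\gbO(x,w_0)$, $w_0\in\O$. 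Since $\gbO$ is not identically a function of one coordinate only, these separate points: if $\gbO(x,w_0)=\gbO(y,w_0)$ for all $w_0$, then letting $w_0\to x$ the left side blows up while the right side stays bounded (as $y\neq x$), a contradiction. With (a), (b), the lattice property, and the fact that $\overline{\mathcal H}$ is a closed subspace, the lattice Stone--Weierstrass theorem for $\C_0$ of a locally compact Hausdorff space yields $\overline{\mathcal H}=\C_0(\O)$, which is the claim.
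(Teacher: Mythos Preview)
Your overall strategy is correct and matches the paper's: both use a Stone--Weierstrass/lattice argument with the potentials $\GbO F$ as the separating elements of $\mathcal H$. The paper passes to the compactification $\hat\O$, applies Stone--Weierstrass in $\C(\hat\O)$, and then descends to $\C_0(\O)$ by subtracting the value at $\partial$; you work directly with the locally compact lattice version on $\C_0(\O)$. That is a cosmetic difference.

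There is, however, a genuine gap in your treatment of point (b). Your ``clean'' argument asserts that the uniform closure $\overline{\mathcal H}\subset\C_0(\O)$ contains the functions $x\mapsto\gbO(x,w_0)$, obtained as uniform limits of $\GbO(F_k)$ along an approximate identity $F_k$ at $w_0$. But $\gbO(\cdot,w_0)$ blows up at $w_0$; it is not even bounded, let alone in $\C_0(\O)$, and so cannot be a uniform limit of elements of $\C_0(\O)$. The estimate you invoke from Lemma~\ref{lem:2.4} does not help here: for an approximate identity one has $\|F_k\|_1=1$ but $\|F_k\|_\infty\to\infty$, so the bound $\|f\|_\infty\int_{B(0,\e)}g^\beta+(C(d)/\e^{d-2})^\beta\|f\|_1$ is uncontrolled. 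Your earlier, hand-wavy suggestion --- take $F$ supported in a small ball about $x$ and exploit the singular behaviour of $\gbO(x,\cdot)$ versus the boundedness of $\gbO(y,\cdot)$ there --- is the correct idea, but it must actually be carried out. This is exactly what the paper does: with $f=\ind_{B(x,\e)}$ it proves, for $z\in B(x,\e)$, the two-sided estimates $\gbO(x,z)\ge(1-d(x,\partial\O)^{2-d}(2\e)^{d-2})^\beta g^\beta(x,z)$ and $\gbO(y,z)\le(\|y-x\|-\e)^{\beta(2-d)}\e^{\beta(d-2)}g^\beta(x,z)$, which together yield $\GbO(f)(y)\le c(\e)\,\GbO(f)(x)$ with $c(\e)\to0$ as $\e\downarrow0$, hence $\GbO(f)(y)<\GbO(f)(x)$ for $\e$ small. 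That quantitative comparison is the missing step in your argument.
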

\begin{proof} Recall that  $\hat\O=\O\cup\{\partial\}$ is the one point compactification of $\O$, and that
$\C(\hat\O)$ is identified with $\C_0(\O)\oplus \ind$. The set
$\hat{\mathcal{H}}=\mathcal{M}\cap \C(\hat\O)$ is a lattice that contains the constants and according to the 
Stone$\,$-Weierstrass Theorem is dense in $\C(\hat\O)$, as soon as it separates points. 

Take first, $x,y \in \O$
and consider $f_\e=f=\ind_{B(x,\e)}$ for small $0<\e<1$ to be determined later. At least we assume for the moment that
$B(x,2\e)\subset \O$ and  $\e<\|x-y\|$.  The function $F=G^{(\beta)}_\O (f)$ belongs to 
$\mathcal{M}^+\cap \C_0(\O)$ and we have for 
$z\in B(x,\e), \xi\in \partial\O$
\begin{eqnarray}
&g(\xi,z)\hspace{-0.3cm}&=C(d)\|\xi-z\|^{2-d}\le d(z,\partial\O)^{2-d}\|x-z\|^{d-2}g(x,z)\nonumber\\
\nonumber\\
&\hspace{-0.3cm}&\le d(z,\partial\O)^{2-d}\e^{d-2}g(x,z)
\le  (d(x,\partial\O)-\e)^{2-d}\e^{d-2}g(x,z)\label{eq:domination}\\
\nonumber\\
&\hspace{-0.3cm}&\le (d(x,\partial\O))^{2-d}(2\e)^{d-2}g(x,z),\nonumber
\end{eqnarray}
and therefore $\gbO(x,z)\ge g^\beta(x,z)\left(1-d(x,\partial\O)^{2-d}(2\e)^{d-2}\right)^{\beta}$. On the other hand,
$\gbO(y,z)\le g^\beta(y,z)=C^\beta(d)\|y-z\|^{\beta(2-d)}$, moreover 
$$
\|y-z\|\ge \|y-x\|-\e\ge \frac{(\|y-x\|-\e)}{\e}\|x-z\|,
$$ 
and we conclude
$$
\gbO(y,z)\le (\|y-x\|-\e)^{\beta(2-d)} \e^{\beta(d-2)}\, g^{\beta}(x,z).
$$
Thus,
$$
\begin{array}{ll}
F(y)=G^{(\beta)}_\O (f)(y)\hspace{-0.3cm}&\le (\|y-x\|-\e)^{\beta(2-d)} \e^{\beta(d-2)} \int\limits_{B(x,\e)} g^{\beta}(x,z) \, dz\\
\hspace{-0.3cm}&\le \left(\frac{(\|y-x\|-\e)^{2-d}}{1-d(x,\partial\O)^{2-d}(2\e)^{d-2}}\right)^\beta 
\e^{\beta(d-2)}\, G^{(\beta)}_\O (f)(x).
\end{array}
$$ 
By taking $\e$ small enough, we prove that  $G^{(\beta)}_\O (f)(y)<G^{(\beta)}_\O (f)(x)$. The function
$F\in \mathcal{M}^+\cap \C_0(\O)$ separates the points $x,y$.

Now, we separate  $x\in\O$ and $\partial$. For this we need the constant functions. If we
take now $F=G^{(\beta)}_\O (\ind)+\ind$. Clearly we have $F(x)>1=F(\partial)$.

So far we have shown that $\mathcal{M}\cap \C(\hat\O)$ is dense in $\C(\hat\O)$. In particular is dense
in $\C_0(\O)$. So, for any $F\in \C_0(\O)$ there exists a sequence $(F_n)_n\subset \mathcal{M}\cap \C(\hat\O)$, 
which converges uniformly on $\hat\O$ to $F$ (extended by $0$ on $\partial$). The sequence defined by
$H_n(x)=F_n(x)-F_n(\partial)\in \mathcal{M}\cap \C_0(\O)$ converges uniformly to $F$ and the result is shown.
\end{proof}

We can apply now Theorem 4.7.1 in \cite{MarcusRosen2006} to show the following result, which is closer to
Theorem \ref{the:1}.

\begin{theorem} 
\label{the:2.7}
Assume that $\O\subset \RR^d$ is a bounded regular open set. Then, for all
$\beta\in [1,\frac{d}{d-2})$ there exists a sub-Markov Ray-semigroup $\PS=(P_t)_t$, defined on 
$\BB_b(\O)$, such that
\begin{enumerate}[(i)]
\item for all $F\in \C_b(\O)$ the function $(t,x)\to P_t(F)(x)$ is, for every $x$, right continuous in $t\in [0,\infty)$ and
for every $t$, a Borel measurable function on $x$. Therefore, it is jointly a Borel measurable function.

\item for all $F\in \C_b(\O), x \in \O, \lambda\ge 0$ 
$$
U^\lambda(F)(x)=\int_0^\infty e^{-\lambda t}\, P_t(F)(x)\, dt.
$$
\end{enumerate}
\end{theorem}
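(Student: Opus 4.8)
The plan is to invoke the general Ray--Knight theory of resolvents, specifically Theorem 4.7.1 in \cite{MarcusRosen2006}, which asserts that a resolvent $\U=(U^\lambda)_{\lambda\ge 0}$ on $\BB_b(\O)$ gives rise to a Ray semigroup provided three hypotheses hold: the resolvent is sub-Markovian and strongly continuous in the appropriate sense, the range $U^\lambda(\C_b(\O))$ (or a suitable subspace) is rich enough to separate points and generate a lattice of supermedian functions, and a certain continuity in $\lambda$. So the bulk of the proof is really a matter of verifying that the hypotheses of that theorem are met by the resolvent $\U$ constructed in the preceding paragraphs, and then quoting it.

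\medskip

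\textbf{Key steps.} First I would recall what has already been established: $\GbO:\BB_b(\O)\to\BB_b(\O)$ is a bounded positive linear operator with $\GbO(\BB_b(\O))\subset\C_0(\O)$, satisfying the CMP (Lemma \ref{lem:2.4}), and hence by Lemma 4.1.9 and Remark 4.1.10 of \cite{MarcusRosen2006} it is the potential $U^0$ of a unique contraction resolvent $\U=(U^\lambda)_\lambda$ with $U^\lambda(\BB_b(\O))\subset\C_0(\O)$ for every $\lambda$. Second, I would note that Proposition \ref{pro:2.5} supplies the norm-continuity of $\lambda\mapsto U^\lambda$, including at $\lambda=0$, which is exactly the regularity in $\lambda$ demanded by the Ray construction. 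Third, the lattice/separation hypothesis is precisely what Lemma \ref{lem:3} provides: $\mathcal{H}=\mathcal{M}\cap\C_0(\O)$ is dense in $\C_0(\O)$, where $\mathcal{M}$ is the lattice of differences of supermedian functions, and $\GbO(\BB_b^+)\subset\mathcal{M}^+$. Putting these three ingredients together is exactly the input to Theorem 4.7.1 in \cite{MarcusRosen2006}, which then yields a sub-Markov Ray semigroup $\PS=(P_t)_t$ on $\BB_b(\O)$ with the joint measurability asserted in (i) and the Laplace-transform identity in (ii). I would state explicitly that conclusion (ii) together with uniqueness of the Laplace transform pins down $P_t$ from $\U$, and that (i) follows because the Ray semigroup is by construction right-continuous in $t$ along the Ray topology, hence along $\C_b(\O)$, and Borel measurable in $x$.

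\medskip

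\textbf{The main obstacle.} The genuinely delicate point is not the algebra of lattices but making sure the density statement of Lemma \ref{lem:3} is used correctly: Theorem 4.7.1 of \cite{MarcusRosen2006} requires that the supermedian functions one works with are continuous and separate points of the \emph{compactification} $\hat\O$, and one must confirm that the Ray topology induced on $\O$ by the cone of continuous supermedian functions coincides with (or is at least compatible with) the original topology, so that the resulting process has state space $\hat\O$ and c\`adl\`ag paths there. This compatibility is implicit in Lemma \ref{lem:3}'s proof — the functions $G^{(\beta)}_\O(\ind_{B(x,\e)})$ and $G^{(\beta)}_\O(\ind)+\ind$ are continuous on $\hat\O$ and separate all points including $\partial$ — so the verification is essentially bookkeeping, but it is the step where one must be careful to cite exactly the right hypotheses. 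Everything else is a direct appeal to the quoted theorem.

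\medskip

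\begin{proof}
All the hypotheses of Theorem 4.7.1 in \cite{MarcusRosen2006} have been verified in the preceding discussion. Indeed, by Lemma \ref{lem:2.4} the operator $\GbO:\BB_b(\O)\to\BB_b(\O)$ is a positive bounded linear operator satisfying the CMP, so by Lemma 4.1.9 and Remark 4.1.10 of \cite{MarcusRosen2006} applied to the Banach space $(\BB_b(\O),\|\cdot\|_\infty)$ it is the $0$-potential of a unique contraction resolvent $\U=(U^\lambda)_{\lambda\ge 0}$ on $\BB_b(\O)$, with $\|\lambda U^\lambda\|\le 1$ and $U^\lambda(\BB_b(\O))\subset\C_0(\O)$ for every $\lambda\ge 0$. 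Proposition \ref{pro:2.5} shows that $\lambda\mapsto U^\lambda$ is norm-continuous on $[0,\infty)$, so in particular $\U$ is strongly continuous. Finally, since $\GbO(\BB_b^+(\O))\subset\mathcal{M}^+$, the set $\mathcal{H}=\mathcal{M}\cap\C_0(\O)$ is, by Lemma \ref{lem:3}, a lattice of continuous supermedian functions, containing the constants, which is dense in $\C_0(\O)$ and separates the points of $\hat\O$ (the functions $G^{(\beta)}_\O(\ind_{B(x,\e)})$ separate points of $\O$ for $\e$ small, and $G^{(\beta)}_\O(\ind)+\ind$ separates each $x\in\O$ from $\partial$, as shown in the proof of that lemma).

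Consequently Theorem 4.7.1 in \cite{MarcusRosen2006} applies and produces a sub-Markov Ray semigroup $\PS=(P_t)_{t\ge 0}$ on $\BB_b(\O)$ such that, for every $F\in\C_b(\O)$, the map $(t,x)\mapsto P_t(F)(x)$ is right-continuous in $t\in[0,\infty)$ for each $x$ and Borel measurable in $x$ for each $t$ — hence jointly Borel measurable — and such that
$$
U^\lambda(F)(x)=\int_0^\infty e^{-\lambda t}\,P_t(F)(x)\,dt
$$
for all $F\in\C_b(\O)$, $x\in\O$ and $\lambda\ge 0$. This establishes (i) and (ii).
\end{proof}
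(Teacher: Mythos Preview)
Your overall strategy --- verify the hypotheses of Theorem 4.7.1 in \cite{MarcusRosen2006} by quoting Lemma \ref{lem:2.4}, Proposition \ref{pro:2.5}, and Lemma \ref{lem:3}, then invoke that theorem --- matches the paper's approach. However, there is a genuine gap in your final step: you assert that Theorem 4.7.1 delivers the Laplace-transform identity
\[
U^\lambda(F)(x)=\int_0^\infty e^{-\lambda t}\,P_t(F)(x)\,dt
\]
for \emph{all} $\lambda\ge 0$, but the theorem as stated in \cite{MarcusRosen2006} only yields this for $\lambda>0$. The case $\lambda=0$ (i.e.\ $\GbO(F)(x)=\int_0^\infty P_t(F)(x)\,dt$) requires a separate argument, and the paper provides one: for $F\in\C_b(\O)^+$ one takes $\lambda\downarrow 0$ on both sides, uses Proposition \ref{pro:2.5} on the left (norm-continuity of the resolvent at $0$, so $U^\lambda(F)\to U^0(F)=\GbO(F)$), and the monotone convergence theorem on the right.

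You do cite Proposition \ref{pro:2.5}, but you misplace its role: you treat the continuity at $\lambda=0$ as an \emph{input hypothesis} to the Ray construction, whereas the construction itself only needs the sub-Markov resolvent and the separating lattice of continuous supermedian functions (Lemma \ref{lem:3}). The continuity at $\lambda=0$ is used \emph{after} the semigroup is in hand, precisely to close the gap at $\lambda=0$. So you have all the ingredients on the table but have not combined them correctly; add the limiting argument and the proof is complete.
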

\begin{proof} Theorem 4.7.1 in \cite{MarcusRosen2006} shows 
$(i)$ and $(ii)$ except for the case $\lambda=0$, which we now prove.
Proposition \ref{pro:2.5} and the monotone convergence Theorem shows that for all
$F\in \C_b(\O)^+$ it holds
$$
\begin{array}{ll}
\GbO(F)(x)\hspace{-0.3cm}&=U^0(F)(x)=\lim\limits_{\lambda \downarrow 0} U^\lambda(F)(x)=
\lim\limits_{\lambda \downarrow 0} \int_0^\infty e^{-\lambda t}\, P_t(F)(x)\, dt\\
\hspace{-0.3cm}&=\int_0^\infty P_t(F)(x)\, dt.
\end{array}
$$
\end{proof}
Recall that a sub-Markov Ray-semigroup $\PS=(P_t)_t$ is like a sub-Markov semigroup, except that
$P_0$ may not be the identity. So, 
\begin{enumerate}
\item for all $t,s \in \RR^+, f\in \BB_b, x \in \O$ we have $P_{t+s}(f)(x)=P_t(P_s(f))(x)=P_s(P_t(f))(x)$;
\item each $P_t$ is a positive operator and $P_t (\ind) \le \ind$, so it is a contraction. Thus, for all
$t\ge 0$, we can decompose $P_t(f)(x) =\int f(y) P_t(x,dy)$, 
where $P_t(x,dy)$ is a sub-probability measure, for $(t,x)$ fixed, which is measurable in $(t,x)$;
\item for all $F\in \C_b(\O), x \in \O$ the function $t\to P_t(F)(x)$ is right continuous in $[0,\infty)$, and
if $f\in \mathcal{M}^+$ then $P_t(f)(x)$ is decreasing in $t$. 
\end{enumerate}
The fact that $P_0$ is not the identity has important consequences. For example, 
$P_t(F)(x)$ is, in general, a discontinuous function of $x$, 
even if $F\in \CK(\O)$.

In \cite{MarcusRosen2006}, a Ray process $X$ is constructed taking values in $\hat \O$ with c\`adl\`ag paths 
and associated semigroup an extension of  $(P_t)_t$. 
The semigroup is extended to $(\hat \O, \B(\hat \O))$ by simply putting  
$\P_t(x,\partial)=1-P_t(x,\O)$
and $\P_t(\partial,\partial)=1$. Similarly, we extend the resolvent $(\overline{U}^\lambda)_{\lambda\ge 0}$ and for any
function defined on $\O$ we set $\overline f$ the extension to $\hat \O$ given by $\overline f(\partial)=0$.
Notice that $\overline f \in \C_b(\hat \O)$ iff $f\in \C_0(\O)$. We also have
$$
\P_t(\overline f)(x)=\overline{P_t(f)}(x),\quad \overline{U}^\lambda(\overline f)(x)=\overline{U^\lambda(f)}(x).
$$
We remark that $\overline{U}^\lambda(\overline f)$ is in general not continuous at $\partial$. 
On the other hand, for all $f\in \BB_b(\hat \O)$ and $t,s\ge 0$ it holds
$$
\EE(f(X_{t+s})\big| \F_t)=\P_s(f)(X_t).
$$

For what it follows, a distinguished set is the set of branching points denoted by 
$$
\mathcal{N}=\{x\in \O: P_0(x,dy)\neq \delta_x(dy)\},
$$
and we put $D=\O\setminus \mathcal{N}$. We denote by $\widehat D=D\cup \partial$. It is known that
\begin{enumerate}
\item $D$ is a Borel set;
\item for all $x\in \widehat D$ we have $P_0(x,dy)=\delta_x(dy)$;
\item for all $t\ge 0$ and all $x\in \O$, we have  $\P_t(x,\mathcal{N})=0$.
\end{enumerate}

An interesting result (see Lemma 4.7.9) is that $X_t(\omega)\in \widehat D$ for all $t\ge 0$, $\PP$-a.s. 
Nevertheless, at some times $t$ the left limit $X_{t-}(\omega)$ may belong to $\mathcal{N}$, 
where the process branches again. 

In what follows, we prove that $X$ is a Feller
process. This is equivalent to show that $\mathcal{N}$ is empty, and this will finish the proof of Theorem \ref{the:1}, 
when $\O$ is a bounded regular open set.

\begin{lemma} 
\label{lem:2.8}
$\mathcal{N}=\varnothing$.
\end{lemma}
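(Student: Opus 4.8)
The goal is to show there are no branching points, i.e. $P_0(x,dy)=\delta_x(dy)$ for every $x\in\O$. The only handle we have on $P_0$ is through the resolvent: $\overline{U}^\lambda(\overline f)\to \overline{P_0(f)}$ pointwise as $\lambda\to\infty$ for $f\in\C_b(\O)$ (a standard Ray-resolvent fact, Chapter 4 of \cite{MarcusRosen2006}). So the plan is to identify $\lim_{\lambda\to\infty}\lambda U^\lambda(F)(x)$ and show it equals $F(x)$ for every $x\in\O$ and every $F$ in a class of functions rich enough to separate points of $\hat\O$ (or at least to force $P_0(x,\cdot)=\delta_x$). Since $G^{(\beta)}_\O:\BB_b(\O)\to\C_0(\O)$, the natural class is $\C_0(\O)$, and it suffices to check the limit on a dense subset such as $G^{(\beta)}_\O(\CK(\O))$, using that $\|\lambda U^\lambda\|\le 1$ to pass limits uniformly.

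First I would use the resolvent identity $\lambda U^\lambda=\II-U^\lambda(\lambda\,\cdot)$ in the form $\lambda U^\lambda \GbO(F)=\GbO(F)-U^\lambda(F)$ (valid since $U^\lambda=\GbO(\II+\lambda\GbO)^{-1}$), together with Proposition \ref{pro:2.5}, which gives $\|U^\lambda(F)-\GbO(F)\|_\infty\to 0$ only as $\lambda\downarrow 0$; for $\lambda\to\infty$ I instead need $\|U^\lambda(F)\|_\infty\to 0$ for fixed $F\in\BB_b(\O)$. That follows from $U^\lambda(F)\le\frac1\lambda\|F\|_\infty$ pointwise, since $\|\lambda U^\lambda\|\le 1$. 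Hence $\lambda U^\lambda(\GbO F)(x)\to \GbO(F)(x)$ pointwise for every $F\in\BB_b(\O)$, and in particular for $F\in\CK(\O)$. On the other hand $\lambda U^\lambda(\GbO F)(x)\to P_0(\GbO F)(x)=\int \GbO(F)(y)\,P_0(x,dy)$. Therefore for every $x\in\O$ and every $H$ in the linear span of $\GbO(\CK(\O))$ we get $\int H(y)\,P_0(x,dy)=H(x)$.

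The remaining point is that $\GbO(\CK(\O))$ (a subset of $\C_0(\O)$) separates the points of $\hat\O$ well enough to conclude $P_0(x,dy)=\delta_x(dy)$ for $x\in\O$. This is exactly the separation argument already carried out in the proof of Lemma \ref{lem:3}: given $x\in\O$ and $y\in\hat\O$ with $y\neq x$, there is $f=\ind_{B(x,\e)}\in\BB_b(\O)$ (or the constant-plus-potential combination for $y=\partial$) with $\GbO(f)(x)>\GbO(f)(y)$; approximating $\ind_{B(x,\e)}$ in $L^1$ by $\CK(\O)$ functions with controlled sup-norm (as in Lemma \ref{lem:2.4}) and using \eqref{eq:6} to get uniform convergence of $\GbO(F_k)\to\GbO(f)$, the same strict inequality holds for some $H=\GbO(F_k)\in\GbO(\CK(\O))$. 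Since $\int H\,dP_0(x,\cdot)=H(x)$ for all such $H$ and $P_0(x,\cdot)$ is a sub-probability measure with $P_0(x,\O)\le 1$, strict convexity/maximum considerations force $P_0(x,\cdot)$ to be a point mass; evaluating against enough separating $H$ pins it to $\delta_x$ and also rules out mass escaping to $\partial$ (using $H=\GbO(\ind)$, which is strictly positive on $\O$ and zero at $\partial$). Hence $x\notin\mathcal{N}$, and $\mathcal{N}=\varnothing$.

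The main obstacle is the last step: turning the mean-value identity $\int H\,dP_0(x,\cdot)=H(x)$, valid for $H$ ranging over $\GbO(\CK(\O))$, into $P_0(x,\cdot)=\delta_x$. One must be careful that this class is not a full function algebra — it is only a separating family of potentials — so I expect to argue via the Ray-cone/supermedian structure: every $H=\GbO(F)$ with $F\ge 0$ is supermedian, so $P_0 H\le H$ with equality, forcing $P_0(x,\cdot)$ to be carried by the set where such $H$ attains its $P_0(x,\cdot)$-essential values, and the strict separation estimates of Lemma \ref{lem:3} then localize this support to $\{x\}$. Making this localization rigorous — rather than merely plausible — is the delicate part of the proof.
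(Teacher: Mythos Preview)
Your derivation of the identity $P_0(\GbO F)=\GbO F$ via the resolvent limit $\lambda U^\lambda(\GbO F)=\GbO F-U^\lambda F\to\GbO F$ is correct and is a clean alternative to the paper's direct Fubini computation on $\GbO F=\int_0^\infty P_t F\,dt$; both arrive at the same intermediate fact. You are also right that this identity together with mere point-separation by $\GbO(\CK(\O))$ is insufficient, and the paper says so explicitly: density of $\GbO(\C_b(\O))$ in $\C_0(\O)$ is equivalent to $\mathcal N=\varnothing$, so that route is circular.

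The genuine gap is the localization step you flag at the end. Your appeal to ``strict convexity/maximum considerations'' and the supermedian inequality $P_0H\le H$ does not bite: you already have \emph{equality} $P_0H=H$ for $H=\GbO F$, so the supermedian bound adds nothing, and there is no convexity in play. Point-separation tells you that for $y\neq x$ some $H$ distinguishes them, but the mean-value identity $\int H\,dP_0(x,\cdot)=H(x)$ is perfectly compatible with $P_0(x,\cdot)$ spreading mass around, as long as values average out. What is needed, and what the paper supplies, is a \emph{quantitative} two-sided estimate on a single well-chosen potential. Take $f=\ind_{\overline{B(x,\e)}}$ and $F=\GbO f$. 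The computation behind \eqref{eq:domination} gives the lower bound $F(x)\ge\bigl(1-d(x,\partial\O)^{2-d}(2\e)^{d-2}\bigr)^\beta G^{(\beta)}(f)(x)$, while a reflection argument shows $F(z)\le G^{(\beta)}(f)(z)\le G^{(\beta)}(f)(x)$ for \emph{every} $z$, with the sharper decay $F(z)\le(k-1)^{\beta(2-d)}G^{(\beta)}(f)(x)$ whenever $\|z-x\|>k\e$. Feeding these into $F(x)=\int F\,dP_0(x,\cdot)$, split over $B(x,k\e)$ and its complement, yields
\[
\bigl(1-d(x,\partial\O)^{2-d}(2\e)^{d-2}\bigr)^\beta\le P_0\bigl(x,B(x,k\e)\bigr)+(k-1)^{\beta(2-d)}\Bigl(1-P_0\bigl(x,B(x,k\e)\bigr)\Bigr).
\]
Sending $\e\downarrow 0$ (with $k\ge 3$ fixed) gives $1\le P_0(x,\{x\})+(k-1)^{\beta(2-d)}\bigl(1-P_0(x,\{x\})\bigr)$, which forces $P_0(x,\{x\})=1$. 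The point is that $F$ is, up to a factor tending to $1$, maximized at $x$ \emph{and} decays at an explicit rate away from $x$; neither the abstract separation statement nor the Ray-cone structure produces this two-scale inequality on its own.
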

\begin{proof} Consider $F\in \C_b(\O)^+$. Since $\GbO(F)(x)=\int P_t(F)(x) \, dt$, Fubini's theorem and 
the semigroup property shows for all $x$
$$
\begin{array}{l}
P_0(\GbO(F))(x)=\int \GbO(F)(y) P_0(x,dy)=
\int \int_0^\infty P_t(F)(y)\, dt \, P_0(x,dy)\\
=\int_0^\infty \int P_t(F)(y)\, P_0(x,dy)\, dt
= \int_0^\infty P_t(F)(x)\, dt=\GbO(F)(x).
\end{array}
$$
Hence, $P_0(\GbO(F))=\GbO(F)$, for all $F\in \C_b(\O)$. Another way to say this is, if
we take $H\in \GbO(\C_b(\O))$, then $P_0(H)(x)=H(x)$ holds for all $x\in \O$.
From here we cannot conclude that $P_0(x,dz)=\delta_x(dz)$, unless we can prove that 
$\GbO(\C_b(\O))$ is dense in $\C_0(\O)$, which turns out to be equivalent to the result we are trying to prove.

Let us continue with the proof. For $x\in \O$, we consider, $f=\ind_{\overline B}$, 
where $B=B(x,\e)$. We assume that $\e>0$ is small enough such that
$\overline{B}(x,2\e)\subset \O$. We define $F=\GbO(f)$.
Take a decreasing sequence $(H_k)_k$ of continuous
functions with compact support in $\overline{B}(x,2\e)$, 
taking values on $[0,1]$, such that $H_k\downarrow f$. Since $\GbO(H_k)\downarrow \GbO(f)$
and $P_0(x,\bullet)$ is a finite measure, we conclude from the monotone convergence Theorem
$P_0(F)=P_0(\GbO(f))=\GbO(f)=F$, which is one of the main ingredients we need.

\medskip

We obtain the following lower estimate (see \eqref{eq:domination} in the proof of Lemma \ref{lem:3})
\begin{equation}
\label{eq:7}
\begin{array}{ll}
P_0(F)(x)\hspace{-0.3cm}&=F(x)\ge \left(1-d(x,\partial\O)^{2-d}(2\e)^{d-2}\right)^{\beta} \int_{B} g^\beta(x,z)\, dz\\
\\
\hspace{-0.3cm}&=\left(1-d(x,\partial\O)^{2-d}(2\e)^{d-2}\right)^{\beta} G^{(\beta)}(f)(x).
\end{array}
\end{equation}
On the other hand, $F(z)=\GbO(f)(z)\le \int_B g^\beta(z,w)\, dw=G^{(\beta)}(f)(z)$. It is straightforward
to show that $G^{(\beta)}(f)(z)\le G^{(\beta)}(f)(x)$. When 
$0<\|z-x\|\le 2\e$  this property can be shown
using a reflection with respect to the hyperplane with normal $(x-z)/\|x-z\|$ passing through 
$\frac12(z+x)$ (this is just the reflection principle for BM). Indeed, take the regions 
$$
R_1=\{w\in B: \|w-z\|\le \|w-x\|\},
$$ 
$R_2$ the reflection of $R_1$ with respect to the hyperplane and 
$R_3=B\setminus (R_1\cup R_2)$. Then
$$
\begin{array}{l}
\int_{R_1} g^\beta(z,w) \, dw=\int_{R_2} g^\beta(x,w) \, dw\\
\\
\int_{R_2} g^\beta(z,w) \, dw=\int_{R_1} g^\beta(x,w) \, dw\\
\\
\int_{R_3} g^\beta(z,w) \, dw<\int_{R_3} g^\beta(x,w) \, dw,
\end{array}
$$
and the claim follows.

Now, if $k\ge 2$ and  $\|z-x\|> k\e$, we obtain $\|z-w\|\ge (k-1)\|x-w\|$, for any $w\in B$, showing that
$$
F(z)\le G^{(\beta)}(f)(z)\le (k-1)^{\beta(2-d)} G^{(\beta)}(f)(x)\le G^{(\beta)}(f)(x)
$$
So, for fix $k\ge 3$, we get
$$
\begin{array}{l}
P_0(F)(x)=\int_{B(x,k\e)} F(z) P_0(x,dz)+\int_{\O\cap (B(x,k\e))^c} F(z) P_0(x,dz)\\
\\
\le G^{(\beta)}(f)(x)\left(\int_{B(x,k\e)} P_0(x,dz)
+(k-1)^{\beta(2-d)}\int_{\O\cap (B(x,k\e))^c} P_0(x,dz)\right)\\
\\
\le G^{(\beta)}(f)(x) \Big(P_0(x,B(x,k\e))+(k-1)^{\beta(2-d)}(1-P_0(x,B(x,k\e)))\Big).
\end{array}
$$
In the last inequality we have used that $P_0(x,\bullet)$ is a measure whose total mass is at most 1. Using
the lower bound obtained in \eqref{eq:7} we conclude that
$$
\left(1-d(x,\partial\O)^{2-d}(2\e)^{d-2}\right)^{\beta}\le P_0(x,B(x,k\e))+(k-1)^{\beta(2-d)}(1-P_0(x,B(x,k\e)))
$$
and taking $\e\downarrow 0$ yields
$$
1\le P_0(x,\{x\})(1-(k-1)^{\beta(2-d)})+(k-1)^{\beta(2-d)},
$$
which is possible only if $P_0(x,\{x\})=1$ and therefore $x\notin \mathcal{N}$. The result is shown.
\end{proof}
This finishes the proof of Theorem \ref{the:1}, when $\O$ is a bounded regular open set. 

\section{Powers of the Green potential in unbounded regular open sets}
\label{sec:3}
In this section we shall prove Theorem \ref{the:1} for $\O$ an unbounded regular open set. We shall use 
the same notation of previous sections. 
The first thing we shall prove is that $\GbO$ satisfies the CMP on $\CK$. For that purpose, we approximate
$\O$ by an increasing sequence of bounded regular open sets. For every
$n\ge 1$ we define $\O_n=\O\cap B(0,n)$, and we assume $n$ is large enough, so $\O_n$ is not empty.
It is straightforward to show that $\O_n$ is also regular. 

For each $n$ consider $\GbOn$, which is a positive bounded linear operator defined on $\C_b(\O_n)$
that satisfies there the CMP. Moreover, for every $F\in \C_b(\O_n)$ we have
$$
\int_{\O_n} \left(\GbOn(F)(x)-1\right)^+\, F(x)\, dx\ge 0.
$$
The Green kernel of $\GbOn$ is $g_{\O_n}^\beta(x,y)=\left(g(x,y)-\EE_x(g(B_{T_{\O_n}},y))\right)^\beta$, which
converges pointwise, for all $x\neq y \in \O$, to 
$g_{\O}^\beta(x,y)=\left(g(x,y)-\EE_x(g(B_{T_{\O}},y))\right)^\beta$, as $n\to \infty$.

Consider, now $F\in \CK(\O)$ and $n$ large enough such that $\supp F\subset \O_n$. Then, for $\e>0$ 
we have
$$
\GbOn(F)(x)=\int_{B(x,\e)\cap \O_n} F(y) g^{\beta}_{\O_n}(x,y)\, dy+
\int_{\O_n\setminus B(x,\e)} F(y) g^{\beta}_{\O_n}(x,y)\, dy.
$$
The first integral is bounded by 
$\|F\|_\infty \int_{B(x,\e)} g^{\beta}(x,y)\, dy=\|F\|_\infty \int_{B(0,\e)} g^{\beta}(0,y)\, dy$. 
The second integral converges, for every fixed $\e$, as $n\to \infty$ to
$$
\int_{\O\setminus B(x,\e)} F(y) g^{\beta}_{\O}(x,y)\, dy,
$$
because the Dominated Convergence Theorem. Here the domination is given by the function $(C(d) \e^{2-d})^\beta |F|$. 
Thus,
$$
\limsup\limits_{n\to \infty} |\GbOn(F)(x)-\GbO(F)(x)|\le 2\|F\|_\infty  \int_{B(0,\e)} g^{\beta}(0,y)\, dy,
$$
which converges to $0$ as $\e\downarrow 0$.
Notice that we also have the uniform domination for $\GbOn(F)(x)$ given by
$$
|\GbOn(F)(x)|\le \|F\|_\infty \int_{B(0,a)} g^{\beta}(0,y)\, dy+\left(\frac{C(d)}{a^{d-2}}\right)^\beta
\int_{\O} |F(y)|\, dy,
$$ 
which is also inherit by $\GbO$
\begin{equation}
\label{eq:3.00}
|\GbO(F)(x)|\le \|F\|_\infty \int_{B(0,a)} g^{\beta}(0,y)\, dy+\left(\frac{C(d)}{a^{d-2}}\right)^\beta \|F\|_1,
\end{equation}
valid for every $a>0$.

Again an application of the Dominated Convergence Theorem, we conclude that
$$
0\le \lim\limits_{n\to \infty} \int_{\O_n} \left(\GbOn(F)(x)-1\right)^+\, F(x)\, dx=
\int_{\O} \left(\GbO(F)(x)-1\right)^+\, F(x)\, dx.
$$
We have proved the following result.

\begin{proposition} $\GbO$ satisfies the CMP in $\CK(\O)$. Moreover, for every
$F\in \CK(\O)$ we have 
\begin{equation}
\label{eq:3.0}
0\le\int_{\O} \left(\GbO(F)(x)-1\right)^+\, F(x)\, dx.
\end{equation}
\end{proposition}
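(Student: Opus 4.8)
The plan is to derive the inequality~\eqref{eq:3.0} by exhausting $\O$ with the bounded regular open sets $\O_n=\O\cap B(0,n)$ and then to read off the CMP as a formal consequence. First I would fix $F\in\CK(\O)$ and take $n$ large enough that $\supp F\subset\O_n$. Since $\O_n$ is bounded and regular, the results of Section~\ref{sec:2} apply: by Theorem~\ref{the:2.7} together with Lemma~\ref{lem:2.4} (equivalently, Proposition~\ref{pro:2.2}\,(ii)), the operator $\GbOn$ satisfies the CMP on $\C_b(\O_n)$ and
$$
\int_{\O_n}\bigl(\GbOn(F)(x)-1\bigr)^+F(x)\,dx\ge 0.
$$

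The second step is pointwise convergence $\GbOn(F)(x)\to\GbO(F)(x)$ for every $x\in\O$. The kernels converge, $g_{\O_n}(x,y)\to\gO(x,y)$ for $x\ne y$ in $\O$, because $T_{\O_n}\uparrow\TO$ and hence $g(B_{T_{\O_n}},y)\to g(B_{\TO},y)$, with the convention $g(B_{\TO},y)=0$ on $\{\TO=\infty\}$, a set on which $\|B_{T_{\O_n}}\|=n\to\infty$ by transience in $d\ge3$; raising to the power $\beta$ gives $g_{\O_n}^\beta\to\gbO$ pointwise. Splitting $\GbOn(F)(x)$ into the integral over $B(x,\e)\cap\O_n$, bounded uniformly in $n$ by $\|F\|_\infty\int_{B(0,\e)}g^\beta(0,y)\,dy$ (finite because $\beta<d/(d-2)$), plus the integral over $\O_n\setminus B(x,\e)$, which converges by dominated convergence with domination $(C(d)\e^{2-d})^\beta\,|F|$, and then letting $\e\downarrow0$, yields the claimed convergence.

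To pass to the limit inside $\int_{\O_n}$ I would invoke the $n$-uniform bound~\eqref{eq:3.00}: for a fixed $a>0$,
$$
|\GbOn(F)(x)|\le\|F\|_\infty\int_{B(0,a)}g^\beta(0,y)\,dy+\Bigl(\tfrac{C(d)}{a^{d-2}}\Bigr)^{\!\beta}\|F\|_1,
$$
uniformly in $n$ and $x$, so that $\bigl(\GbOn(F)(\cdot)-1\bigr)^+F(\cdot)$ is dominated by a fixed integrable function supported on the compact set $\supp F$; dominated convergence then gives~\eqref{eq:3.0}. The CMP follows formally: if $\GbO(F)\le1$ on $\{F\ge0\}$, the integrand $\bigl(\GbO(F)-1\bigr)^+F$ vanishes there, so~\eqref{eq:3.0} forces $0\le\int_{\{F<0\}}\bigl(\GbO(F)(x)-1\bigr)^+F(x)\,dx\le0$; hence $\GbO(F)\le1$ a.e.\ on $\{F<0\}$, and since $\GbO(F)$ is continuous on $\O$ (by the same ball/complement splitting, using local integrability of $g^\beta$) the open set $\{\GbO(F)>1\}$ has Lebesgue measure zero and is therefore empty.

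The main obstacle is the double limit interchange in the last two steps: one must couple the pointwise convergence $g_{\O_n}^\beta\to\gbO$ with an $n$-uniform, $F$-dependent integrable domination, which is exactly where the local integrability of $g^\beta$ near the diagonal — hence the restriction $\beta<\frac{d}{d-2}$ — is indispensable, and why~\eqref{eq:3.00} must be established uniformly in $n$ before letting $n\to\infty$; the behaviour of $\TO$ and of $g(B_{\TO},y)$ on $\{\TO=\infty\}$ is a comparatively minor point, handled by transience of Brownian motion in dimension $d\ge3$.
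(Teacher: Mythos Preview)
Your proposal is correct and follows essentially the same approach as the paper: exhaust $\O$ by the bounded regular sets $\O_n=\O\cap B(0,n)$, use the Section~\ref{sec:2} inequality for $\GbOn$, prove pointwise convergence $\GbOn(F)\to\GbO(F)$ via the same $\e$-ball splitting with domination $(C(d)\e^{2-d})^\beta|F|$, and pass to the limit in the integral using the uniform bound~\eqref{eq:3.00}. Your treatment is in fact slightly more detailed than the paper's in two places: you explain the kernel convergence via $T_{\O_n}\uparrow\TO$ and transience on $\{\TO=\infty\}$ (the paper merely asserts pointwise convergence), and you spell out why continuity of $\GbO(F)$ upgrades the a.e.\ inequality to an everywhere inequality in the CMP deduction.
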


For reasons that will be clear later on, we need to extend $\GbO$ to $\BB_b(\O)\cap L^1$, and prove
it satisfies the CMP there (here $L^1=L^1(\O,dx)$). 

Inequality \eqref{eq:3.00} allow us to extend $\GbO$ to $\BB_b(\O)\cap L^1$. Indeed, 
for a fixed $f\in \BB_b(\O)\cap L^1$ consider a sequence of functions $(F_k)_k \in \CK(\O)$, such that 
$\|F_k\|_\infty\le \|f\|_\infty$ and  $\|f-F_k\|_1\to 0$. 

Then, for every $\e>0$, choose $a=a(\e)>0$ such that 
$$
2\|f\|_\infty \int_{B(0,a)} g^{\beta}(0,y)\, dy\le \e/2
$$ 
and then choose $n_0=n_0(\e)$ such that $\left(\frac{C(d)}{a^{d-2}}\right)^\beta \|F_k-F_m\|_1\le \e/2$, for all
$k,m\ge n_0$. 

Then $\|\GbO(F_k)-\GbO(F_m)\|_\infty\le \e$, which means that $\GbO$ is well defined
on $\BB_b(\O)\cap L^1$ and it satisfies inequality \eqref{eq:3.00} there.
Using the Dominated Convergence Theorem, we prove this extension also satisfies the CMP on $\BB_b(\O)\cap L^1$.

Now, we prove that for $f\in \BB_b(\O)\cap L^1$, the function $\GbO(f)\in \C_0(\O)$. 
Take $(x_n)_n\subset \O$. We consider first the case where $x_n\to x\in\O$. Fix a positive $\e$ 
and assume that $\|x-x_n\|\le \e$, then
$$
\begin{array}{ll}
|\GbO(f)(x)-\GbO(f)(x_n)|\le &2\|f\|_\infty \int_{B(0,3\e)} g^\beta(0,y)\, dy\\
&+\int_{\O\setminus B(x,2\e)} |f(y)| |\gbO(x_n,y)-\gbO(x,y)|\,dy.
\end{array}
$$
The second integral converges to $0$ as $n\to \infty$ by the Dominated Convergence Theorem. Then, by taking $\e\downarrow 0$
the first term converges to $0$, and the continuity of $\GbO(F)$ is shown in $\O$. 

Assume now $x\in \partial \O$. We need to show that $\GbO(f)(x_n)\to 0$. This is done in a similar way, by
decomposing the integral as above (here we use that $\O$ is regular). 
Finally, we assume that $\|x_n\|\to \infty$. Consider  $M>0, \e>0$ and decompose
$$
\begin{array}{l}
|\GbO(f)(x_n)|\le \|f\|_\infty \int_{\O\cap B(0,M)} \gbO(x_n,y)\, dy
+\|f\|_\infty\int_{B(x_n,\e)\cap \O} \gbO(x_n,y) dy\\ 
\\
+\int_{\O\setminus (B(0,M)\cup B(x_n,\e))} |f(y)| \gbO(x_n,y)\, dy\\
\\
\le \|f\|_\infty \int_{B(0,M)} g^{\beta}(x_n,y)\, dy+\|f\|_\infty \int_{B(0,\e)} g^\beta(0,y) dy\\
\\
+(C(d)\e^{2-d})^\beta \int_{\O\setminus B(0,M)} |f(y)|\, dy.
\end{array}
$$
For $M$ fixed, the first term converges to $0$ as $n\to \infty$, by the Dominated Convergence Theorem. Hence,
$$
\limsup\limits_{n \to \infty} |\GbO(f)(x_n)|\le \|f\|_\infty \int_{B(0,\e)} g^\beta(0,y) dy \\
+(C(d)\e^{2-d})^\beta \int_{\O\setminus B(0,M)} |f(y)|\, dy.
$$
Now, we let $M\to \infty$ to get 
$\limsup\limits_{n \to \infty} |\GbO(f)(x_n)|\le \|f\|_\infty \int_{B(0,\e)} g^\beta(0,y) dy$. Finally,
the claim follows by making $\e\downarrow 0$.

\bigskip

Consider $\Psi\in \C_0(\O)^+\cap L^1$, such that $0<\Psi(x)$ for all $x\in \O$. 
Without loss of generality we can assume that $\|\Psi\|_\infty\le 1$.
With the aid 
of this function, we construct a new potential operator defined on $\C_b(\O)$ as
$$
V_\Psi(F)(x)=\GbO(\Psi F)(x).
$$
We point out that $\Psi F \in \C_0(\O)\cap L^1$. Then $V_\psi$ is a well defined, positive bounded linear
operator $V_\psi: \C_b(\O) \to \C_0(\O)\subset \C_b(\O)$, with norm
$$
\|V_\Psi\|_{\C_b(\O)}\le \inf\limits_{a>0} \left\{\int_{B(0,a)} g^{\beta}(0,y)\, dy+\left(\frac{C(d)}{a^{d-2}}\right)^\beta
\|\Psi\|_1\right\}.
$$
We also have that $V_\Psi$ satisfies the CMP in $\C_b(\O)$, because
$$
\int \left(V_\Psi(F)(x)-1\right)^+ \Psi(x) F(x)\, dx= \int \left(\GbO(\Psi F)(x)-1\right)^+ \Psi(x) F(x)\, dx\ge 0.
$$
Thus, if $V_\Psi(F)(x)\le 1$ on the set $\{z\in \O:\, F(z)\ge 0\}$, we conclude that $V_\Psi(F)(x)\le 1$ on the set
$\{z\in \O:\, \Psi(z) F(z)<0\}$, which is exactly the set where $F$ is negative. Thus, $V_\Psi$ satisfies the CMP.

Then, there exists a unique contraction resolvent 
$\VS_\Psi=(V_\Psi^\lambda)_\lambda$ of positive continuous linear operators on $\C_b(\O)$, 
such that $V_\Psi=V_\Psi^0$. 
Again, we can prove that this resolvent is continuous on $\lambda$ (see Proposition \ref{pro:2.5}). In particular
$$
V_\Psi=\lim\limits_{\lambda\downarrow 0} V_\Psi^\lambda.
$$
On the other hand, following
the proof of Lemma \ref{lem:3}, we have the density of $\mathcal{M}\cap \C_0(\O)$ in $\C_0(\O)$.
Then, there exists a Ray semigroup $\PS^\Psi=(P^\Psi_t)_{t\ge 0}$ and the associated Ray process $X^\Psi$, such 
that for all $F\in \C_b(\O)$ and all $\lambda\ge 0$
$$
V^\lambda_\Psi (F)(x)=\int e^{-\lambda t} P^\Psi_t(F)(x) \, dt=
\EE_x\left(\int e^{-\lambda t} F(X^\Psi_t)\, dt\right).
$$
The important case is $\lambda=0$, which gives for all $F\in \C_b(\O), x \in \O$
$$
\GbO(\Psi F)(x)=\int P^\Psi_t(F)(x) \, dt= \EE_x\left(\int F(X^\Psi_t)\, dt\right).
$$
Recall that $X^\Psi$ has c\`adl\`ag paths on $\hat \O$, the one point compactification of $\O$, and the
semigroup can be assumed to be extended to $\BB_b(\hat \O)$.

Finally, $X^\Psi$ is a Feller process as soon as we prove that the set of branching points is empty.
This is done exactly in the same way as we did it in Lemma \ref{lem:2.8}. So we summarize this in
the next proposition.

\begin{proposition} For every $\Psi\in \C^+_0(\O)\cap L^1$, which we assume is strictly positive, 
there exists a unique Feller process
$X^\Psi$, with c\`adl\`ag paths, taking values in $\hat\O$ such that its $0$-potential is $V_\Psi$, that is,
for all $F\in \C_b(\O)$ we have
$$
\GbO(\Psi F)(x)=V_\Psi(F)(x)=\EE_x\left(\int F(X^\Psi_t) \, dt\right).
$$
\end{proposition}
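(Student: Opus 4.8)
The plan is simply to collect the facts established in the running discussion above and to rerun, one last time, the construction of Sections \ref{sec:2}--\ref{sec:3} with $V_\Psi$ in place of $\GbO$. We have already checked that $V_\Psi(F)=\GbO(\Psi F)$ is a bounded positive linear operator carrying $\C_b(\O)$ into $\C_0(\O)\subset\C_b(\O)$ and that it satisfies the CMP on $\C_b(\O)$; the strict positivity of $\Psi$ entered precisely here, to pass from control on $\{\,\Psi F\ge0\,\}$ to control on $\{\,F\ge0\,\}$. So first I would invoke Lemma 4.1.9 and Remark 4.1.10 of \cite{MarcusRosen2006}, applied to $(\C_b(\O),\|\cdot\|_\infty)$, to obtain the unique contraction resolvent $\VS_\Psi=(V_\Psi^\lambda)_{\lambda\ge0}$ with $V_\Psi^0=V_\Psi$ and $V_\Psi^\lambda(\C_b(\O))\subset\C_0(\O)$ for every $\lambda$, and then repeat the argument of Proposition \ref{pro:2.5} (using the uniform norm bound on $V_\Psi$ recorded above instead of \eqref{eq:6}) to get that $\lambda\mapsto V_\Psi^\lambda$ is norm-continuous on $[0,\infty)$.

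Next I would verify that $\mathcal{M}\cap\C_0(\O)$ is dense in $\C_0(\O)$, which is the verbatim argument of Lemma \ref{lem:3}: the resolvent equation gives $V_\Psi(\C_b(\O)^+)\subset\mathcal{M}^+$, and points of $\hat\O$ are separated by the functions $V_\Psi(\ind_{B(x,\e)})=\GbO(\Psi\,\ind_{B(x,\e)})$ for small $\e$ — the estimate \eqref{eq:domination} still applies once one factors out $\inf_{B(x,\e)}\Psi>0$ and $\sup_{B(x,\e)}\Psi<\infty$, both tending to $\Psi(x)>0$ as $\e\downarrow0$ — together with the constant $\ind$, followed by Stone--Weierstrass on $\C(\hat\O)$. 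Granting this, Theorem 4.7.1 of \cite{MarcusRosen2006} supplies a sub-Markov Ray semigroup $\PS^\Psi=(P_t^\Psi)_t$ with $V_\Psi^\lambda(F)(x)=\int_0^\infty e^{-\lambda t}P_t^\Psi(F)(x)\,dt$ for $F\in\C_b(\O)$, the case $\lambda=0$ being recovered as in Theorem \ref{the:2.7} from monotone convergence and the continuity at $0$; the associated Ray process $X^\Psi$ then has c\`adl\`ag paths in $\hat\O$.

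The one point that really has to be redone is that the set of branching points $\mathcal{N}=\{x\in\O:P_0^\Psi(x,\cdot)\neq\delta_x\}$ is empty, and I expect this to be the main (and essentially only) obstacle. The plan is to copy the proof of Lemma \ref{lem:2.8}: for $x\in\O$ and small $\e$ with $\overline B(x,2\e)\subset\O$, set $f=\ind_{\overline B(x,\e)}$, $F=V_\Psi(f)$, and note $P_0^\Psi(F)=F$ by approximating $f$ from above by continuous functions. From $F(x)\ge\big(\inf_{B(x,\e)}\Psi\big)\big(1-d(x,\partial\O)^{2-d}(2\e)^{d-2}\big)^\beta G^{(\beta)}(f)(x)$ and $F(z)\le\big(\sup_{B(x,\e)}\Psi\big)G^{(\beta)}(f)(z)\le\big(\sup_{B(x,\e)}\Psi\big)G^{(\beta)}(f)(x)$ (the last step by the reflection principle, exactly as in Lemma \ref{lem:2.8}), splitting $P_0^\Psi(F)(x)$ over $B(x,k\e)$ and its complement and then letting $\e\downarrow0$ — so that $\inf_{B(x,\e)}\Psi/\sup_{B(x,\e)}\Psi\to1$ by continuity of $\Psi$ at $x$ — yields $1\le P_0^\Psi(x,\{x\})\big(1-(k-1)^{\beta(2-d)}\big)+(k-1)^{\beta(2-d)}$ for all $k\ge3$, which forces $P_0^\Psi(x,\{x\})=1$. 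Hence $\mathcal{N}=\varnothing$, so $X^\Psi$ is a Feller process whose $0$-potential is $V_\Psi$. Uniqueness in law is then automatic: $V_\Psi$ determines $\VS_\Psi$ (it is the unique contraction resolvent with potential $V_\Psi$), which determines the Feller semigroup, which determines the finite-dimensional laws of $X^\Psi$.
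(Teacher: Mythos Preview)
Your proposal is correct and follows precisely the route the paper takes: the discussion preceding the proposition already records that $V_\Psi$ is a bounded positive operator $\C_b(\O)\to\C_0(\O)$ satisfying the CMP, that the associated resolvent is continuous in $\lambda$ (Proposition \ref{pro:2.5}), that $\mathcal{M}\cap\C_0(\O)$ is dense (Lemma \ref{lem:3}), and that the Ray process exists via Theorem 4.7.1 of \cite{MarcusRosen2006}; the paper then simply states that the branching set is empty ``exactly in the same way as we did it in Lemma \ref{lem:2.8}'', which is what you have spelled out, correctly absorbing the extra $\Psi$ factors through $\inf_{B(x,\e)}\Psi/\sup_{B(x,\e)}\Psi\to 1$. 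Your treatment is, if anything, more detailed than the paper's own summary, but the approach is identical.
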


Next, we study the dependence on $\Psi$ for the resolvent, semigroup and process. This is done throughout
a time change, we explain it now. Assume that $\Psi_1\le \Psi_2$, both functions satisfying the above
requirements. We denote by $X^{\Psi_i}$ for $i=1,2$, the associated processes.

Consider the increasing process $\A_t=\int_0^t \frac{\Psi_1}{\Psi_2}\left(X^{\Psi_2}_s\right) \, ds$, where we 
assume that $0/0=0$. Since $\Psi_1\le \Psi_2$, 
then $\A$ is an increasing continuous process. If $\zeta$ is the hitting time of $\delta$ for $X^{\Psi_2}$, then
$\A$ is strictly increasing on $[0,\zeta]$ and it is constant on $[\zeta,\infty]$. 
We define $(\tau_t)_t$ the right continuous inverse of $\A$, which is continuous and strictly increasing
on $[0,\A_\zeta)$ and $\tau_t=\infty$ for $t\ge \A_\zeta$. Given that $\A_t\le t$, we have $\tau_t\ge t$. 

Consider $\tilde X$ the process obtained by time change from $X^{\Psi_2}$, that is $\tilde X_t=X^{\Psi_2}_{\tau_t}$.
$\tilde X$ is a Feller process, with c\`adl\`ag paths on $\hat \O$. The potential associated to $\tilde X$
is for $F\in \C_b(\O)^+$ (we extend $F$ to $\hat \O$ by $F(\delta)=0$)
$$
\begin{array}{ll}
\tilde V(F)(x)\hspace{-0.3cm}&=\EE_x\left( \int F(\tilde X_t) \, dt\right)=\EE_x\left( \int F(X^{\Psi_2}_{\tau_t}) \, dt\right)
=\EE_x\left( \int F(X^{\Psi_2}_{t}) \, d\A_t\right)\\
\hspace{-0.3cm}&=\EE_x\left( \int F(X^{\Psi_2}_{t}) \frac{\Psi_1}{\Psi_2}(X^{\Psi_2}_t) dt\right)=
V_{\Psi_2}(F \Psi_1/\Psi_2)=\GbO(\Psi_1 F)\\
\hspace{-0.3cm}&=V_{\Psi_1}(F).
\end{array}
$$
Therefore, $\tilde V=V_{\Psi_1}$ and from the uniqueness of the resolvent associated to this potential, 
we obtain that $\tilde V^\lambda=V_{\Psi_1}^\lambda$, for all $\lambda$. From here, we get that the Laplace
transform of the semigroups associated to $X^{\Psi_1}$ and $\tilde X$ coincide, which implies that
both processes have the same distribution: $X^{\Psi_1}\stackrel{\mathcal{L}}{=}\tilde X$.

\smallskip

We proceed now to construct the Feller process associated to $\GbO$. For that, take any function
$\Psi$ as above. The time change we propose is $\A_t=\int_0^t \left(\Psi(X^{\Psi}_s)\right)^{-1} \, ds$.
We point out that $\A$ is an increasing process, continuous in the interval $[0,\zeta)$, where
$\zeta>0$ is the hitting time of $\delta$ for the process $X^{\Psi}$. We put $\A_t=\A_{\zeta-}$ for 
$t\ge \zeta$, whenever $\zeta<\infty$. Again, we consider $(\tau_t)_t$ the right continuous
inverse of $\A$. Then, $(\tau_t)$ is strictly increasing and continuous on $[0,\A_{\zeta-})$. 

As before, we consider the Feller process $\XX=(X^\Psi_{\tau_t})_t$, taking values on
$\hat\O$ and lifetime $\zeta^{{}^{{}_\clubsuit}}=\A_{\zeta-}$. In principle this process
$\XX\,$ depends on $\Psi$.
As in the previous computation, we get
for every $F\in \CK(\O)$
\begin{eqnarray}
&V^{{}^{{}_\clubsuit}}\!(F)(x)&=\EE_x\left( \int F(\XXt) \, dt\right)=\EE_x\left( \int F(X^\Psi_{\tau_t}) \, dt\right)\nonumber\\
&&=\EE_x\left( \int F(X^\Psi_{t}) \, d\A_t\right)=\EE_x\left( \int F(X^\Psi_{t}) \frac{1}{\Psi}(X^\Psi_t) dt\right)
\label{eq:3.1}\\
&&=V_{\Psi}(F /\Psi)=\GbO(F).\nonumber
\end{eqnarray}
Existence in Theorem \ref{the:1} is shown, with the Feller process $\XX$.

We remark that the law of $\XX$, constructed above does not depend on the choice of $\Psi$. Indeed,
if we have two functions $\Psi_1, \Psi_2$, we consider $\Psi_3=\Psi_1\vee \Psi_2$ and we proceed
as above. Let us call $Z=X^{\Psi_3}, X=X^{\Psi_1}, Y=X^{\Psi_2}$. As above, $X$ is a time change of $Z$. 
More precisely,
$\tilde X=(Z_{\eta_t})_t$ has the same law as $X$, where $\eta$ is the inverse
of the increasing process $d\mathcal{B}_t=\frac{\Psi_1}{\Psi_3}(Z_t) dt$.

On the other hand, we denote by $\tilde{X}^{{}^{{}_\clubsuit}}\!\!,$ 
obtained from $\tilde X$ with a time change $(\tau_t)_t$, which
is the inverse of $d\A_t=\frac{1}{\Psi_1}(\tilde X_t) dt$. Notice that $\tilde{X}^{{}^{{}_\clubsuit}}\!\!=F(\tilde X)$
for some fixed measurable transformation $F$, and ${X}^{{}^{{}_\clubsuit}}\!=F(X)$. Therefore $\tilde{X}^{{}^{{}_\clubsuit}}$ 
and $\XX\,$ have the same law.
We can see $\tilde{X}^{{}^{{}_\clubsuit}}\!\!$ as a time change of $Z$. 
This time change is just the composition of the two
time changes, which is the inverse of $\mathcal{C}=(\A_{\mathcal{B}_t})_t$ and
$$
d\mathcal{C}_t=(d\A)_{\mathcal{B}_t}\, d{\mathcal{B}_t}=
\frac{1}{\Psi_1}(\tilde X_{\mathcal{B}_t})\, \frac{\Psi_1}{\Psi_3}(Z_t) dt=
\frac{1}{\Psi_1}(Z_{\eta_{{}_{\mathcal{B}_t}}})\, \frac{\Psi_1}{\Psi_3}(Z_t) dt=
\frac{1}{\Psi_3}(Z_t) dt,
$$
which is the time change from $Z$ to ${Z^{{}^{{}_\clubsuit}}}$. This shows that the law of $\XX$
and ${Z^{{}^{{}_\clubsuit}}}$ are the same. Analogously, the law of $Y^{{}^{{}_\clubsuit}}$ and 
${Z^{{}^{{}_\clubsuit}}}$ coincide, proving the claim.

With the same ideas we can prove uniqueness in Theorem \ref{the:1}. Assume that
$Y$ is a Feller process, with c\`adl\`ag paths on $\hat \O$, such that, for all $F\in \CK(\O)$
it holds
$$
\GbO(F)(x)=\EE_x\left( \int F(Y_t) \, dt\right).
$$
Then, using a time change with $(\tau_t)_t$, the inverse of $d\A_t=\Psi(Y_t) \, dt$, we see that
$X=(Y_{\tau_t})_t$ is a Feller process whose $0$-potential is, for $F\in \CK(\O)$,
$$
U(F)(x)=\GbO(\Psi F)(x)=V_\Psi(F)(x),
$$
and therefore, $V_\Psi$ is an extension of $U$ to $\C_b(\O)$. This shows that
$X$ has the same law as $X^\Psi$. 
Finally, $Y$ has the same law as $\XX$ and uniqueness is shown.

\section{Dimension $d=2$, proof of Theorem \ref{the:2}}
\label{sec:4}

In this section we shall prove Theorem \ref{the:2}. We first consider $\O$ a bounded regular open set in $\RR^2$. 
The main ideas are already exposed in previous sections. We need some bounds for the
Green potential on bounded domains of $\RR^2$. For that purpose we consider first the case of a 
ball. Consider $\O=B(0,1)$, then it is well known (see for example \cite{FukushimaOshimaTakeda2011}) that
$$
g_\O(x,y)=-\frac{1}{\pi} \left[ \log(\|x-y\|)-\log(\|x\|\|y-x_1^*\|)\right],
$$
where $x_1^*=\frac{x}{\|x\|^2}$ is the point dual of $x$ with respecto to $\partial B(0,1)$. By a scaling argument we
obtain the Green kernel for $B(0,R)$ as 
$$
g_{R}(x,y)=g_\O\left(\frac{x}{R},\frac{y}{R}\right)=-\frac{1}{\pi} \left[ \log(\|x-y\|)-\log(\|x\|\|y-x_R^*\|)+\log(R)\right],
$$
where now $x_R^*=R^2 \frac{x}{\|x\|^2}$.

Now, for the random walk we consider 
$$
a(x)=\sum\limits_{n=0}^\infty p_n(0,0)-p_n(0,x),
$$ 
which gives, roughly, the difference between the expected number of visits to $0$ minus the expected number
of visits to $x$, for the simple random walk in $\ZZ^2$. Notice that $a(0)=0$. 
The estimate we need on $a(x)$ is the following (see \cite{Lawler2010}, Theorem 4.4.4), for all $x\in\ZZ^2$
$$
\left |a(x)-\left(c_2\log(\|x\|\vee 1)+\frac{2\gamma+\log(8)}{\pi}\right)\right|\le \Psi(\|x\|),
$$
where $c_2=\frac{2}{\pi}$, $\gamma$ is the Euler constant and $\Psi:\RR^+\to \RR^+$ is a bounded
decreasing function, such that $\Psi(r)r^2$ is also bounded.

Now, the Green kernel for the simple random walk of a finite set $E$ is
$$
0\le \g_E(x,y)=\EE_x\left(\sum\limits_{k=0}^{R_E-1} \ind_y(\Ss_k)\right)=\EE_x(a(\Ss_{R_E}-y))-a(x-y).
$$
Consider as before, $\Zz=\sqrt{\frac{2}{n}}\,\ZZ^2$. Take $A_n=\Zz\cap B(0,1)$ 
and $E_n=\sqrt{n/2}A_n \subset B(0,\sqrt{n/2})$. So for $u\neq v \in A_n$ we have
$$
\begin{array}{l}
\g_{E_n}(u\sqrt{n/2},v\sqrt{n/2})=\EE_{u\sqrt{n/2}}(a(\Ss_{R_{E_n}}-v\sqrt{n/2}))-a(\sqrt{n/2}(u-v))\\
=\sum\limits_{\zeta\in \partial A_n} a(\sqrt{n/2}(\zeta-v)) 
\PP_{x\sqrt{n/2}}\left(\Ss_{R_{E_n}}=\sqrt{n/2} \, \zeta\right)-a(\sqrt{n/2}(u-v))\\
=\sum\limits_{\zeta\in \partial A_n} (a(\sqrt{n/2}(\zeta-v))-a(\sqrt{n/2}(u-v)))
\PP_{u\sqrt{n/2}}\left(\Ss_{R_{E_n}}=\sqrt{n/2} \, \zeta\right)\\
=\sum\limits_{\zeta\in \partial A_n} c_2(\log(\|\zeta-v\|)-\log(\|u-v\|))
\PP_{u\sqrt{n/2}}\left(\Ss_{R_{E_n}}=\sqrt{n/2}\, \zeta\right)+\R(u,v,n),
\end{array}
$$
where $\sup\limits_n\sup\limits_{w,z\in A_n}|\R(w,z,n)|<2\Psi(0)$ and for every $\e>0$ we have
{\small
$$
D(\e)=\sup\limits_{n\ge 1} n^2 \sup\{|\R(w,z,n)|:\, w,z \in A_n, \|w-z\|>\e, \|w\|\le 1-\e,\|z\|\le 1-\e\}<\infty
$$
}Now, for any $x,y \in B(0,1)$, we take  $u_n, v_n \in A_n$ any pair of sequences such that
$u_n\to x, v_n\to y$. Then, if $x\neq y, \|x\|<1, \|y\|<1$, we have as $n\to \infty$,
{\small
\begin{eqnarray}
\label{eq:4.0}
&\frac12 \g_{E_n}(u_n\sqrt{n/2},v_n\sqrt{n/2})\to\frac1\pi\int \log\left(\frac{\|\zeta-y\|}{\|x-y\|}\right)
\PP_x(W_{T_\O}\in d\zeta)=g_\O(x,y),
\end{eqnarray}
}
where $W$ is a BM. For the last equality see for instance \cite{FukushimaOshimaTakeda2011}, Example 1.5.1.

\medskip

In what follows we need some extra domination, to use the Dominated Convergence Theorem. 
For that purpose we consider $C_n=\Zz\cap B(0,2)$
and $F_n=\sqrt{n/2} C_n$. Take $n_0$ large enough, such that $d(A_n,\partial C_n)\ge 1/2$ for all $n\ge n_0$
($n_0=1$ actually works). 
If we take $n\ge n_0$ and $u,v \in A_n$, using the previous computations we have
{\small
\begin{equation}
\label{eq:4.1}
\frac12\g_{E_n}(u\sqrt{n/2},v\sqrt{n/2})\le\frac12 \g_{F_n}(u\sqrt{n/2},v\sqrt{n/2})\le \frac1\pi \Big|\log(\|u-v\|)\Big|+C,
\end{equation}
with $C= \frac1\pi \log(3)+ \Psi(0)$. 

Now, we define the Green potential associated to scaled random walk. We assume that $\beta\ge 1$
and $\alpha \in (0,2\pi)$

\begin{eqnarray}
&&\G^{n,(\beta)}_{A_n} (F)(x)=\sum\limits_{w\in A_n} \frac{2}{n}\, F(w+x-x(n)) 
\left[\frac{1}{2}\g_{E_n}\left(x(n)\sqrt{n/2},w\sqrt{n/2}\right)\right]^\beta\\
&&\G^{n,(\exp,\alpha)}_{A_n} (F)(x)=\sum\limits_{w\in A_n}  \frac{2}{n} \, F(w+x-x(n)) 
\, e^{\frac{\alpha}{2}\g_{E_n}\left(x(n)\sqrt{n/2},w\sqrt{n/2}\right)}\,
\end{eqnarray}
Using the convergence \eqref{eq:4.0} and domination \eqref{eq:4.1} we prove the following lemma.

\begin{lemma} Consider $\O=B(0,1)$. For $\beta\ge 1$, $\alpha\in (0,2\pi)$, $F\in \C_b(\O)$ and
$x\in \O$, we have
\begin{enumerate} 
\item $\lim\limits_{n\to\infty} \G^{n,(\beta)}_{A_n} (F)(x)=\GbO(F)(x)=\int_\O F(y) \gbO(x,y)\, dy$.
\item $0\le \lim\limits_{n\to\infty} \int \left(\G^{n,(\beta)}_{A_n} (F)(x)-1\right)^+ F(x) \, dx=
\int_\O \left(G^{(\beta)}_\O (F)(x)-1\right)^+ F(x) \, dx$.
\item $\lim\limits_{n\to\infty} \G^{n,(\exp,\alpha)}_{A_n} (F)(x)=G^{\,\exp,\alpha}(F)(x)=\int_\O F(y) e^{g_\O(x,y)}\, dy$.
\item $0\le \lim\limits_{n\to\infty} \int \left(\G^{n,(\exp,\alpha)}_{A_n} (F)(x)-1\right)^+ F(x) \, dx=
\int_\O \left(G^{\,\exp,\alpha} (F)(x)-1\right)^+ F(x) \, dx$.
\end{enumerate}
\end{lemma}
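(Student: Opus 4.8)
The plan is to carry out, in dimension two, the exact analogue of the approximation scheme of Sections \ref{sec:1} and \ref{sec:2}: build the continuous operators as limits of the discrete operators $\G^{n,(\beta)}_{A_n}$ and $\G^{n,(\exp,\alpha)}_{A_n}$ attached to the killed random walk; get the pointwise convergences in (1) and (3) from the kernel convergence \eqref{eq:4.0} together with the domination \eqref{eq:4.1}, via dominated convergence; and get the nonnegativity together with the identities in (2) and (4) from the fact that the killed–walk Green matrix $\g_{E_n}$ is a symmetric potential matrix, through Propositions \ref{pro:4} and \ref{pro:3}, combined with a Riemann–sum comparison as in Corollary \ref{cor:1}.

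First I would prove (1) and (3). Fix $x\in\O$ and split in $\G^{n,(\beta)}_{A_n}(F)(x)$ the diagonal term $w=x(n)$ from the rest. The diagonal term equals $\tfrac2n F(x)\bigl[\tfrac12\g_{E_n}(x(n)\sqrt{n/2},x(n)\sqrt{n/2})\bigr]^\beta$; since the bound on $a(\cdot)$ from \cite{Lawler2010} gives $\g_{E_n}(x(n)\sqrt{n/2},x(n)\sqrt{n/2})\le \tfrac1\pi\log n+O(1)$, this term is $O\!\bigl(n^{-1}(\log n)^\beta\bigr)\to0$, and for the exponential operator the diagonal term is $O\!\bigl(n^{\alpha/(2\pi)-1}\bigr)$, which tends to $0$ precisely because $\alpha<2\pi$. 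The remaining (off‑diagonal) sum is an integral of a step function $H_n$ which by \eqref{eq:4.0} converges for a.e.\ $y$ to $F(y)\gbO(x,y)$ (resp.\ $F(y)e^{\alpha g_\O(x,y)}$), and which by \eqref{eq:4.1}, after the usual comparison of $\|x(n)-w\|$ with $\|x-y\|$ for $y$ in the cube around $w$ and a harmless $\varepsilon$–$\delta$ splitting near the diagonal, is dominated uniformly in $n$ by $\|F\|_\infty\bigl(\tfrac1\pi|\log\|x-y\||+C'\bigr)^\beta\ind_{B(0,2)}(y)$ (resp.\ by $\|F\|_\infty e^{\alpha C'}\|x-y\|^{-\alpha/\pi}\ind_{B(0,2)}(y)$). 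These dominating functions lie in $L^1(\O,dy)$: in $\RR^2$ one has $\int_{B(0,2)}|\log\|z\||^\beta\,dz<\infty$ for every $\beta\ge1$ (so no restriction on $\beta$ is forced), while $\int_{B(0,2)}\|z\|^{-\alpha/\pi}\,dz<\infty$ exactly when $\alpha<2\pi$. Dominated convergence then gives (1) and (3), and also a uniform bound for $|\G^{n,(\beta)}_{A_n}(F)|$, $|\G^{n,(\exp,\alpha)}_{A_n}(F)|$ on a fixed bounded set, as in \eqref{eq:2}.

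Next I would establish the discrete inequalities underlying (2) and (4). For each $n$ the matrix $U^{(n)}_{u,v}=c(n)\,\g_{E_n}(u\sqrt{n/2},v\sqrt{n/2})$ indexed by $E_n$, with a suitable positive normalization $c(n)$ as in Proposition \ref{pro:2} (and as in Lemma \ref{lem:2}), is a nonsingular symmetric potential matrix, being $(\mathbb I-P)^{-1}$ for the simple random walk killed on leaving the finite set $E_n$. By Proposition \ref{pro:4} its Hadamard $\beta$‑power is again a symmetric potential matrix for every $\beta\ge1$; for the exponential I would write $e^{\frac\alpha2\g_{E_n}}=\mathbf J+\bigl(e^{\frac\alpha2\,\cdot}-1\bigr)(\g_{E_n})$, where $\mathbf J$ is the all‑ones matrix — itself a symmetric potential matrix (the potential of the walk that visits each state exactly once, in a uniformly random order) — while $\bigl(e^{\frac\alpha2\,\cdot}-1\bigr)(\g_{E_n})$ is a potential matrix by the stability of the class under $a\mapsto e^{a}-1$ and under positive scaling (see \cite{dell2009,libroDMSM2014}); since sums of potential matrices are potential matrices, $\tfrac2n e^{\frac\alpha2\g_{E_n}}$ is a symmetric potential matrix. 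In either case Proposition \ref{pro:3}, applied with $v_x=F(x)$ and the outer positive factor $\tfrac2n$, yields
\[
\sum_{x\in A_n}\Bigl(\G^{n,(\beta)}_{A_n}(F)(x)-1\Bigr)^{\!+}F(x)\,\tfrac2n\ \ge\ 0,\qquad
\sum_{x\in A_n}\Bigl(\G^{n,(\exp,\alpha)}_{A_n}(F)(x)-1\Bigr)^{\!+}F(x)\,\tfrac2n\ \ge\ 0.
\]

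Finally, to pass to the limit I would reproduce the equicontinuity and uniform‑boundedness estimates of Proposition \ref{pro:1} in the present setting (the kernel $\|z\|^{2-d}$ there is replaced by $\tfrac1\pi|\log\|z\||+C$, resp.\ by $e^{\alpha C}\|z\|^{-\alpha/\pi}$, still locally integrable in the plane under the stated hypotheses). As in Corollary \ref{cor:1} these estimates show that the Riemann sums $\sum_{x\in A_n}(\cdot)\,\tfrac2n$ above and the integrals $\int_\O(\cdot)\,dz$ have the same limit, while the uniform domination together with (1) and (3) gives, by dominated convergence,
\[
\lim_{n\to\infty}\int_\O\Bigl(\G^{n,(\beta)}_{A_n}(F)(z)-1\Bigr)^{\!+}F(z)\,dz=\int_\O\Bigl(G^{(\beta)}_\O(F)(z)-1\Bigr)^{\!+}F(z)\,dz,
\]
and likewise for the exponential operator; combining this with the nonnegativity of the Riemann sums proves (2) and (4). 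I expect the main obstacle to be the exponential case: unlike the power case it rests on the closure properties of the class of (possibly singular) potential matrices — stability under $a\mapsto e^{a}-1$, under positive scaling and under sums, and the identification of the all‑ones matrix as a potential — and one must keep track of the fact that $\alpha<2\pi$ is exactly the range in which both the diagonal term vanishes in the limit and the dominating kernel $\|z\|^{-\alpha/\pi}$ is locally integrable in $\RR^2$.
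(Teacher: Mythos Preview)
Your plan is essentially the paper's own argument, which it compresses into the single sentence ``Using the convergence \eqref{eq:4.0} and domination \eqref{eq:4.1} we prove the following lemma''; the details you supply for (1)--(4) are the right ones and mirror Proposition~\ref{pro:1}, Corollary~\ref{cor:1} and Proposition~\ref{pro:2} in the two--dimensional setting.

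One point deserves correction. For the discrete inequality behind (4) you decompose
\[
e^{\frac{\alpha}{2}\g_{E_n}}=\mathbf J+\bigl(e^{\frac{\alpha}{2}\,\cdot}-1\bigr)(\g_{E_n})
\]
and then invoke that ``sums of potential matrices are potential matrices''. That closure under sums is neither stated nor proved in the paper (and is not part of the standard toolbox in \cite{dell2009,libroDMSM2014}); you would have to justify it separately. But the detour is unnecessary: Proposition~\ref{pro:4}(2) says directly that for any $\alpha>0$ the Hadamard exponential $\exp(\alpha U)$ of a nonsingular potential matrix $U$ is again a nonsingular potential matrix. Apply it to $U=\g_{E_n}$ (which is a symmetric nonsingular potential by the argument of Lemma~\ref{lem:2}) with parameter $\alpha/2$, and then Proposition~\ref{pro:3} yields the discrete inequality exactly as in the $\beta$--power case. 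Your handling of the diagonal term, showing it is $O\!\bigl(n^{\alpha/(2\pi)-1}\bigr)$ and hence vanishes precisely for $\alpha<2\pi$, is correct and is the place where the upper bound on $\alpha$ enters.
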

The immediate consequence of this lemma is that
\begin{proposition} For $\O=B(0,1),\beta\ge 1, \alpha\in (0,2\pi)$ the bounded positive linear operators
$G^{(\beta)}_\O, G^{\,\exp,\alpha}_\O$
satisfy the CMP in $\C_b(\O)$.
\end{proposition}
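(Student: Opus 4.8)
The plan is to read this proposition off the preceding Lemma in exactly the way the Complete Maximum Principle of Section~\ref{sec:1} was read off Corollary~\ref{cor:1} and Proposition~\ref{pro:2}. All the substantive work is already inside the Lemma: items~(2) and~(4) provide the nonnegativity of the limiting ``defect'' integrals $\int_\O (G^{(\beta)}_\O(F)-1)^+F\,dx$ and $\int_\O (G^{\,\exp,\alpha}_\O(F)-1)^+F\,dx$, and this nonnegativity is in turn inherited from the stability of nonsingular symmetric potential matrices under Hadamard powers and under the entrywise exponential (the $d=2$ counterparts of the tools used in the proof of Proposition~\ref{pro:2}). So the only thing left to organize is the passage from an inequality valid almost everywhere to one valid at every point, and for that I need the continuity of $G^{(\beta)}_\O(F)$ and of $G^{\,\exp,\alpha}_\O(F)$ on $\O$.

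First I would record this continuity, together with the boundedness of the two operators on $\C_b(\O)$. From the explicit formula for $\gO$ on $B(0,1)$ one obtains a constant $C=C(\O)$ such that, for $\|x-y\|$ small, $0\le \gO(x,y)\le -\tfrac1\pi\log\|x-y\|+C$; hence $\gbO(x,y)\le\big(-\tfrac1\pi\log\|x-y\|+C\big)^\beta$ and $e^{\alpha\gO(x,y)}\le e^{\alpha C}\,\|x-y\|^{-\alpha/\pi}$ near the diagonal, while off the diagonal both kernels are bounded and jointly continuous with $y$ ranging over $\O$. The first majorant is locally integrable in $\RR^2$ for every $\beta\ge1$; the second is locally integrable \emph{precisely} because $\alpha/\pi<2$. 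Splitting $\O$ into a small ball $B(x,\e)$ and its complement and applying dominated convergence — the singular part controlled by these majorants, the regular part by continuity of the kernel off the diagonal — shows that $x\mapsto G^{(\beta)}_\O(F)(x)$ and $x\mapsto G^{\,\exp,\alpha}_\O(F)(x)$ are continuous on $\O$ and that both operators map $\C_b(\O)$ boundedly into itself. This is the one step requiring a little care, and it is exactly where the hypothesis $\alpha<2\pi$ enters; I do not expect any deeper obstacle.

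Granting this, the CMP follows in three lines. Fix $F\in\C_b(\O)$ and suppose $G^{(\beta)}_\O(F)(x)\le1$ for every $x$ with $F(x)\ge0$; on that set $\big(G^{(\beta)}_\O(F)(x)-1\big)^+=0$, so item~(2) of the Lemma gives
$$
0\le\int_\O\big(G^{(\beta)}_\O(F)(x)-1\big)^+F(x)\,dx=\int_{\{F<0\}}\big(G^{(\beta)}_\O(F)(x)-1\big)^+F(x)\,dx.
$$
The integrand on the right is nonpositive, hence it vanishes for a.e.\ $x\in\O$, which forces $G^{(\beta)}_\O(F)\le1$ a.e.\ on $\O$, and therefore $G^{(\beta)}_\O(F)\le1$ on all of $\O$ by the continuity just established. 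Thus $G^{(\beta)}_\O$ satisfies the CMP on $\C_b(\O)$. Replacing item~(2) by item~(4) and $G^{(\beta)}_\O$ by $G^{\,\exp,\alpha}_\O$ throughout, the identical argument yields the CMP for the exponential operator, which completes the proof.
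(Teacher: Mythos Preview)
Your proposal is correct and follows exactly the template the paper sets up: in the paper the proposition is stated as ``the immediate consequence'' of the preceding lemma, and the intended argument is precisely the one carried out in detail at the end of Section~\ref{sec:1} for $\RR^d$ (use items~(2) and~(4) to get $G_\O^{(\beta)}(F)\le 1$ a.e., then upgrade to everywhere by continuity). Your added verification that the operators are bounded and map $\C_b(\O)$ into continuous functions, and in particular your identification of the integrability condition $\alpha/\pi<2$ as the place where $\alpha<2\pi$ is used, are details the paper leaves implicit but are exactly right.
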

The rest of the proof of Theorem \ref{the:2}, for $\O=B(0,1)$, is obtained by following the lines in Section \ref{sec:2},
showing the analogous of Lemma \ref{lem:2.4}, Proposition \ref{pro:2.5}, Lemma \ref{lem:3}, 
Theorem \ref{the:2.7} and finally
Lemma \ref{lem:2.8}.

Finally, we sketch the proof of Theorem \ref{the:2} for a general bounded regular open set $\O\subset\RR^2$.
For that purpose, we consider a large ball $\widetilde{\O}=B(0,R)$, such that $\O\subset B(0,R/2)$. We take
$A_n=\Zz\cap \O, E_n=\sqrt{n/2}\, A_n$ and $C_n=\Zz\cap \widetilde \O, F_n=\sqrt{n/2}\, C_n$. The idea is to
use $g=g_{\widetilde \O}, \g=\g_{F_n}$, as we did with $g_{\RR^d}, \g_{\ZZ^d}$ in Section \ref{sec:2}.

The main relations we need are, for $u,v \in A_n, x,y \in \O$
$$
\begin{array}{l}
\g_{E_n}(u,v)=\g_{F_n}(u,v)-\EE_x(\g_{F_n}(\Ss_{R_{E_n}},v))\le \g_{F_n}(u,v)\\
\\
g_\O(x,y)=g_{\widetilde \O}(x,y)-\EE_x(\g_{\widetilde \O}(W_{T_{\O}},y))\le g_{\widetilde \O}(x,y).
\end{array}
$$
These representations provide the convergence and domination we need to finish the proof when $\O$
is a bounded regular open set. The general case, that is when $\O$ is an unbounded regular Greenian domain,
is done as in Section \ref{sec:3}.

\section{Existence of a density for the semigroup}
\label{sec:density}
In this section we show that the semigroup $\mathscr{P}=(P_t)_t$ whose $0$-potential is $\GbO$, has 
a density $p(t,x,y)$ with respect to Lebesgue measure, that is, for all $f\in \C_b(\O)$ it holds
$$
P_t(f)(x)=\int_\O f(y) \, p(t,x,y)\, dy.
$$
We restrict ourselves to the case $\O$ is a bounded regular open set of $\RR^d$ and $d\ge 2$. 

The arguments given below, with some minor modifications, will work as well for an unbounded 
regular open set $\O\subset \RR^d$, with $d\ge 3$ and for  
the semigroup associated to $G_\O^{\exp,\alpha}$, with $\O$ a regular bounded open set in dimension $d=2$. 

In what follows, we shall use some results for symmetric Markov process
given in \cite{FukushimaOshimaTakeda2011}.
We denote by $\U=(U^\lambda)_\lambda$ the associated resolvent. We know that $\mathscr{P}$ is a Feller
semigroup and $\U$ is a continuous resolvent (in $\lambda$). In particular, for every $f\in \C_b(\O)$
the function $(t,x)\to P_t(f)(x)$ is continuous.

\begin{proposition} There exists a function $u:\RR^+\times\O\times \O\to (0,\infty]$, such that
\begin{enumerate}
\item $u(\lambda,\bullet,\bullet)$ is a symmetric function, bounded above by $\gbO$.
\item $u$ is continuous on $[0,\infty)\times (\O\times \O \setminus \{(x,x):\, x \in \O\})$, and 
$u(0,x,y)=\gbO(x,y)$ for all $x,y$.
\item $u(\lambda,x,\bullet)$ is a density for $U^\lambda$, that is, for all $f\in \C_b(\O)$ it holds
$$
U^\lambda (f)(x)=\int_\O f(y) \, u(\lambda,x,y)\, dy.
$$
\item The semigroup $\mathscr{P}$ has an extension to $L^2(\O,dx)$, which is symmetric and continuous.

\item For every $t>0, x\in \O$ the measure $P(t,x,dy)$ has a density $p(t,x,y)$ 
with respect to Lebesgue measure. The semigroup $\mathscr{P}$ has a continuous extension to $L^p(\O,dx)$ 
for all $1\le p\le \infty$. Every $P_t$ is a contraction in $L^p(\O,dx)$.

\item For all $t, x\neq y$, the function $P_t(\gbO(\bullet,x))(y)$ is well defined and finite. This function
is decreasing and right continuous in $t$. This function is also measurable in the three variables. 
The function $\nu(t,x,y)=\gbO(y,x)-P_t(\gbO(\bullet,x))(y)$ is increasing
and right continuous on $t$. For all $\lambda\ge 0$ and $x, y$, we have 
\begin{equation}
\label{eq:5.0}
u(\lambda,x,y)=\int_0^\infty e^{-\lambda t}\, \nu(dt,x,y).
\end{equation}
In particular, 
$$
\gbO(x,y)=\int_0^\infty \nu(dt,x,y).
$$
Also, for all $f\in \C_b(\O)$ it holds
$$
U^\lambda (f)(x)=\int_\O \int  e^{-\lambda t} f(y) \nu(dt,x,y)\, dy.
$$
The measure $\nu(dt,x,y)$ is absolutely continuous in $t$, for $x$ and $dy$-a.e. Its density (with respect to $t$) is
$$
\frac{\partial}{\partial t} \nu(t,x,y)=p(t,x,y).
$$
In particular, for all $x$ and $dy$-a.e. it holds
$$
\gbO(x,y)=\int_0^\infty p(t,x,y) \, dt.
$$
\item For all $f\in \C_b(\O), x\in \O$ the function $t\to \int_\O P_t(\gbO(\bullet,x))(y) f(y) \, dy$ is $\C^1([0,\infty))$
and 
$$
-\frac{\partial}{\partial t} P_t(\GbO(f)(x)=-\frac{\partial}{\partial t} \int_\O P_t(\gbO(\bullet,x))(y) f(y) \, dy=P_t(f)(x).
$$
As a special case we obtain
$$
\lim\limits_{h\downarrow 0} \frac{P_h(\GbO(f))-\GbO(f)}{h}=-f,
$$
holds uniformly. This means that $\GbO(f)$ is in the domain of $\mathcal{L}$ the infinitesimal generator of $(P_t)_t$ and
$$
\mathcal{L}(\GbO(f))=-f.
$$
\end{enumerate}
\end{proposition}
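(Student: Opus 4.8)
The plan is to build the objects in the order $u\to p\to\nu\to\mathcal L$, using throughout that $\gbO$ is a symmetric kernel, that $x\mapsto\int_\O\gbO(x,y)\,dy$ is bounded by \eqref{eq:6}, and that $\beta<\tfrac d{d-2}$ so that $\gbO(x,y)\le g^\beta(x,y)=C(d)^\beta\|x-y\|^{\beta(2-d)}$ is locally integrable. \textbf{Symmetry and $L^p$ extensions.} By Fubini and symmetry of $\gbO$ one has $\int_\O f\,\GbO(g)\,dx=\int_\O g\,\GbO(f)\,dx$ for $f,g\in\C_b(\O)$; since $U^\lambda=\GbO(\II+\lambda\GbO)^{-1}$, the same identity passes to each $U^\lambda$, so $\U$ is a symmetric resolvent with respect to Lebesgue measure and $\mathscr P$ is a symmetric semigroup. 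Being sub-Markov it is a contraction on $L^\infty$; by symmetry/duality it is a contraction on $L^1(\O,dx)$, hence on every $L^p(\O,dx)$ by Riesz--Thorin, and on $L^2(\O,dx)$ it is a strongly continuous symmetric contraction semigroup. This gives parts (4), (5).

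\textbf{Density of the resolvent.} From the resolvent equation $U^\lambda(f)=\GbO\bigl(f-\lambda U^\lambda(f)\bigr)$ every $U^\lambda(f)$ is an application of $\GbO$ and so has a kernel; iterating, the natural candidate is
\[
u(\lambda,x,y)=\sum_{k\ge0}(-\lambda)^k\,r_k(x,y),
\]
where $r_k$ is the kernel of $(\GbO)^{k+1}$, i.e.\ $r_0=\gbO$ and $r_k(x,y)=\int_\O\gbO(x,z)\,r_{k-1}(z,y)\,dz$. Here is where $\beta<\tfrac d{d-2}$ enters: a Riesz-composition estimate shows that $r_1$ has only a singularity of order $\|x-y\|^{\,d-2\beta(d-2)}$ (bounded, respectively logarithmic, when $2\beta(d-2)\le d$), and each further composition raises the exponent by $d-\beta(d-2)>0$, so after finitely many steps $r_m$ is bounded and continuous; combined with $\int_\O\gbO(x,z)\,dz\le D$ this gives $r_{m+j}\le (\sup r_m)\,D^{\,j}$ and hence absolute, locally uniform convergence of the series off the diagonal for small $\lambda$, which then propagates to all $\lambda\ge0$ and yields joint continuity via $u(\lambda,x,y)-u(\lambda',x,y)=(\lambda'-\lambda)\int_\O u(\lambda,x,z)u(\lambda',z,y)\,dz$. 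The resulting $u$ is symmetric (each $r_k$ is), satisfies $0<u(\lambda,x,y)\le\gbO(x,y)$, is continuous on $[0,\infty)\times(\O\times\O\setminus\{(x,x)\})$, has $u(0,\cdot,\cdot)=\gbO$, and is the density of $U^\lambda$ (interchange sum and integral by the same bounds). This is parts (1), (2), (3).

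\textbf{Density of the semigroup and the measure $\nu$.} Since $\mathscr P$ is a symmetric sub-Markov semigroup whose $\lambda$-resolvent kernels are absolutely continuous and dominated by the $L^1$-kernel $g^\beta$, the theory of symmetric Markov processes in \cite{FukushimaOshimaTakeda2011} provides a jointly measurable symmetric density $p(t,x,y)\ge0$ with $P_t(f)(x)=\int_\O f(y)p(t,x,y)\,dy$ and $u(\lambda,x,y)=\int_0^\infty e^{-\lambda t}p(t,x,y)\,dt$. Letting $\lambda\downarrow0$ (continuity of $u$ in $\lambda$, monotone convergence) gives $\gbO(x,y)=\int_0^\infty p(t,x,y)\,dt$ for $x$ and $dy$-a.e.\ $y$. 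By Chapman--Kolmogorov and symmetry, $P_t(\gbO(\bullet,x))(y)=\int_0^\infty p(t+s,x,y)\,ds=\int_t^\infty p(s,x,y)\,ds$, finite for $x\neq y$, decreasing and right continuous in $t$; hence $\nu(t,x,y)=\gbO(y,x)-P_t(\gbO(\bullet,x))(y)=\int_0^t p(s,x,y)\,ds$ is increasing, right continuous, absolutely continuous with density $p(t,x,y)$, and $\int_0^\infty e^{-\lambda t}\nu(dt,x,y)=u(\lambda,x,y)$; integrating against $f$ gives the stated formula for $U^\lambda(f)$. This covers (5) and (6).

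\textbf{The generator identity.} For $f\in\C_b(\O)$, Fubini and symmetry of $p$ give
\[
\int_\O P_t(\gbO(\bullet,x))(y)\,f(y)\,dy=\int_t^\infty\!\int_\O p(s,x,y)f(y)\,dy\,ds=\int_t^\infty P_s(f)(x)\,ds=P_t(\GbO(f))(x),
\]
the last equality being $P_t\GbO=\int_t^\infty P_s\,ds$. Since $\mathscr P$ is Feller the integrand $s\mapsto P_s(f)(x)$ is continuous and $\int_0^\infty P_s(|f|)(x)\,ds=\GbO(|f|)(x)<\infty$, so this function of $t$ is $\C^1([0,\infty))$ with derivative $-P_t(f)(x)$. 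Evaluating at $t=0$, where $P_0=\II$ because the branching set is empty (Lemma~\ref{lem:2.8}), and reading the $\C^1$ statement in the sup-norm (using $\GbO(f)\in\C_0(\O)$ and $P_h(\GbO f)\to\GbO f$ uniformly), yields $\bigl(P_h(\GbO(f))-\GbO(f)\bigr)/h\to -f$ uniformly, so $\GbO(f)$ lies in the domain of $\mathcal L$ and $\mathcal L(\GbO(f))=-f$. The main obstacle is the second step: producing the resolvent density $u$ with full joint continuity and the sharp bound $u\le\gbO$, i.e.\ showing that the iterated kernels of $g^\beta$ regularise — this is the only place the restriction $\beta<d/(d-2)$ (and, in $d=2$, $\alpha<2\pi$) genuinely enters.
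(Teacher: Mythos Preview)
Your proof is largely correct and parallels the paper's argument for parts (4), (5), and (7); the symmetry of $\U$ via $U^\lambda=\GbO(\II+\lambda\GbO)^{-1}$, the $L^p$ contraction (you use Riesz--Thorin, the paper uses H\"older directly), the appeal to \cite{FukushimaOshimaTakeda2011} for the density $p$, and the identity $P_t(\GbO f)=\int_t^\infty P_s f\,ds$ are all handled the same way.

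The substantive difference is in parts (1)--(3). You build $u(\lambda,x,y)$ through the Neumann series $\sum(-\lambda)^k r_k$, controlling the iterated kernels $r_k$ by Riesz-composition estimates and then extending in $\lambda$ via the kernel resolvent identity. The paper's route is much shorter: from $0\le U^\lambda(f)\le\GbO(f)$ for $f\ge 0$, the Riesz representation theorem gives immediately that $U^\lambda(\cdot)(x)$ is a positive measure dominated by $\gbO(x,\cdot)\,dy$, hence has a density $u\le\gbO$; the single resolvent identity $\gbO(x,y)-u(\lambda,x,y)=\lambda\,U^\lambda(\gbO(y,\bullet))(x)$ then delivers continuity off the diagonal in one line, with no series, no Riesz composition, and no analytic continuation in $\lambda$. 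Your approach is valid but heavier, and your closing remark overstates matters: in the paper's argument the restriction $\beta<d/(d-2)$ enters only through local integrability of $g^\beta$ (so that $\GbO$ is bounded on $\C_b(\O)$), not through any regularisation of iterated kernels. Note also that the bound $u\le\gbO$ does not come for free from your alternating series; you should obtain it, as the paper does, from $U^\lambda\le\GbO$.

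There is a small gap in your treatment of part (6). Your identity $P_t(\gbO(\bullet,x))(y)=\int_t^\infty p(s,x,y)\,ds$ via Chapman--Kolmogorov and $\gbO=\int_0^\infty p$ is only established for fixed $x$ and $dy$-a.e.\ $y$, whereas the statement asserts finiteness, monotonicity and right continuity for \emph{all} $y\neq x$. The paper fills this by approximating $\gbO(y,\bullet)$ from below by the continuous bounded supermedian functions $\GbO(f_k)\wedge N$ (with $f_k\to\delta_y$), for which $t\mapsto P_t(\,\cdot\,)(z)$ is jointly continuous and decreasing by the Feller property; monotone convergence then yields $P_t(\gbO(y,\bullet))(z)\le\gbO(y,z)$ and the stated regularity for every $t$ and every $z$. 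You should invoke this supermedian approximation explicitly.
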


\begin{proof} 
$(1)$--$(3)$. From the resolvent equation, we have for all $f\in \C^+_b(\O)$ and all $x$
$$
0\le U^\lambda(f)(x)\le \GbO(f)(x).
$$
By the Riesz representation theorem, there exists a measure $\rho(dy)$, which depends on $\lambda,x$
such that $U^\lambda(f)(x)=\int_\O f(y) \rho(dy)\le \int_\O f(y) \gbO(x,y) \, dy$. The conclusion is that $\rho$ is absolutely
continuous with respect to the Lebesgue measure, with a density $u(y)=u(\lambda,x,y)$, which is bounded
by $\gbO(x,y)$. Again the resolvent equation shows that, for $x\neq y$
\begin{equation}
\label{eq:4.1}
\gbO(x,y)-u(\lambda,x,y)=\lambda \int_\O u(\lambda,x,z) \gbO(z,y) \, dz=\lambda U^\lambda(\gbO(y,\bullet))(x),
\end{equation}
which also proves that $u(\lambda,x,\bullet)$ has a continuous version on $\O\setminus \{x\}$.
Here we notice that 
$$
U^\lambda(\gbO(y,\bullet))(x)\le \GbO(\gbO(y,\bullet))(x)=
\int_\O \gbO(y,z)\gbO(z,x)\, dz\le \int_\O g^\beta(y,z) g^\beta(z,x)\, dz,
$$ 
which is finite for $y\neq x$. This representation also proves the continuity of $u$ on 
$[0,\infty)\times (\O\times \O \setminus \{(x,x):\, x \in \O\})$.

The symmetry of $u(\lambda,\bullet,\bullet)$, follows from the symmetry of the operator $U^\lambda$
with respect to Lebesgue measure: for all $f,g \in \C_b(\O)$ it holds 
$$
\int_\O U^\lambda(f)(x) g(x) \, dx=\int_\O U^\lambda(g)(x) f(x) \, dx.
$$ 
This can be shown by using that $U^\lambda=(\II+\lambda \GbO)^{-1}\GbO$. Indeed, for 
small $\lambda<\|\GbO\|_{\C_b(\O)}$, we can use the expansion
$$
U^\lambda=\sum\limits_{n=0}^\infty (-1)^{n} \lambda^n (\GbO)^{n+1},
$$
which proves that $U^\lambda$ satisfies the desired symmetry for small $\lambda$, given that
$\GbO$ has a symmetric kernel $\gbO$. The resolvent equation allows
us to extend this property to all $\lambda$.

\noindent $(4)$ We first show that for all $f,g\in \C_b(\O)$ and all $t\ge 0$, we have the desired
symmetry
$$
\begin{array}{l}
\int_\O U^\lambda(f)(x) g(x) \, dx=\int_0^\infty e^{-\lambda t} \int_\O P_t(f)(x) g(x)\, dx
=\int_\O U^\lambda(g)(x) f(x) \, dx\\
\\
=\int_0^\infty e^{-\lambda t} \int_\O P_t(g)(x) f(x)\, dx.
\end{array}
$$
The uniqueness of Laplace transform implies that 
$$
\int_\O P_t(f)(x) g(x)\, dx=\int_\O P_t(g)(x) f(x)\, dx
$$
holds for almost all $t$. Since the functions $t\to \int_\O P_t(f)(x) g(x)\, dx$,  
$t\to \int_\O P_t(g)(x) f(x)\, dx$ are continuous, we conclude they are equal.

Now, we show that $\mathscr{P}$ has an extension to $L^2$. Using the Cauchy-Schwarz
inequality, we show that for all $f\in\C_b(\O)$
$$
\begin{array}{l}
\int_\O (P_t(f)(x))^2 \, dx\le \int_\O (P_t(\ind)(x))P_t(f^2)(x) \, dx\le \int_\O P_t(f^2)(x) \, dx\\
\\
=\int_\O P_t(\ind)(x) f^2(x) \, dx\le \int_\O f^2(x) \, dx.
\end{array}
$$
This shows that $\mathscr{P}$ has a continuous extension to $L^2(\O,dx)$.

\noindent $(5)$ The existence of densities is a straightforward 
consequence of the symmetry of $\mathscr{P}$ in $L^2(\O,dx)$, and 
Theorem 4.1.2 and Theorem 4.2.4 in \cite{FukushimaOshimaTakeda2011}.

Now, consider $p=1$ and $f \in \C_b(\O)$, we have again 
$$
\int_\O \left|P_t(f)(x)\right|\, dx\le \int_\O P_t(|f|)(x) \, dx=\int_\O P_t(\ind)(x) |f(x)|\, dx\le\|f\|_1.
$$
This shows that $P_t(f)\in L^1(\O,dx)$ and $\|P_t\|_1\le 1$. 
The case $p=\infty$ is obvious since for all $f\in L^\infty(\O,dx)$ we have 
$$
|P_t(f)(x)|\le \int_\O p(t,x,y) |f(y)| \, dy \le \|f\|_{L^\infty} P_t(\ind)(x)\le \|f\|_{L^\infty}.
$$
Moreover, in this case we have 
$$
 \|P_t(f)\|_\infty \le \|f\|_{L^\infty}.
$$
For a general $1<p<\infty$ and $f\in \C_b(\O)$,
we have from H\"older's inequality with $q$ the conjugated of $p$
$$
\begin{array}{l}
\int_\O |P_t(f)(x)|^p\, dx\le \int_\O (P_t(|f|)(x))^p\, dx \le  \int_\O (P_t(|f|^p)(x)) (P_t(\ind)(x))^{p/q} \, dx\\
\\
\le \int_\O (P_t(|f|^p)(x)) \, dx=\int_\O (P_t(\ind)(x)) |f|^p(x) \, dx\le \int_\O |f|^p(x) \, dx.
\end{array}
$$
This proves the claim. 

\noindent $(6)$ Consider $f_k\in \C^+_0(\O)$ whose support is contained in $B(y,1/k)$, and $\int f_k(x) dx=1$, that
is, $f_k$ is an approximation of $\delta_y$. The function $\GbO(f_k)(x)$ is a nonnegative supermedian continuous 
function and also it is $\GbO(f_k)(x) \wedge N$. Then, it is known that $(t,z)\to P_t(\GbO(f_k) \wedge N)(z)$
is a jointly continuous function of $(t,z)$ and it is decreasing in $t$. 
We notice that $\GbO(f_k) \wedge N$ converges in $k$ to the continuous and bounded function 
$\gbO(y,\bullet) \wedge N$ and therefore $(t,z)\to P_t(\gbO(y,\bullet) \wedge N)(z)$ is also
jointly continuous and decreasing in $t$ (continuity follows from the Feller property). 

The monotone convergence Theorem allows us to conclude that
$$
(t,z)\to P_t(\gbO(y,\bullet))(z)
$$
is jointly measurable, decreasing in $t$ and bounded by $\gbO(y,z)$, that is, for
all $t,x,z$
$$
P_t(\gbO(y,\bullet))(z)\le \gbO(y,z).
$$
The function $t \to P_t(\gbO(y,\bullet))(z)$ is decreasing and lower semicontinuous, which implies that
it is right continuous.

The same reasoning shows that 
$$
U^\lambda(\gbO(y,\bullet))(x)=\int_0^\infty e^{-\lambda t} P_t(\gbO(y,\bullet))(x) dt.
$$
Thus, from \eqref{eq:4.1}
$$
\gbO(x,y)-u(\lambda,x,y)=\lambda \int e^{-\lambda t} P_t(\gbO(y,\bullet))(x) dt,
$$
or equivalently
$$
u(\lambda,x,y)=\lambda \int e^{-\lambda t} (\gbO(x,y)-P_t(\gbO(y,\bullet))(x)) dt.
$$
The symmetry between $x$ and $y$ also shows that
\begin{equation}
\label{eq:5.2}
u(\lambda,x,y)=\lambda \int e^{-\lambda t} (\gbO(x,y)-P_t(\gbO(x,\bullet))(y)) dt.
\end{equation}
Integrating by parts the RHS we obtain
$$
u(\lambda,x,y)=\int e^{-\lambda t} \nu(dt,x,y),
$$
which proves \eqref{eq:5.0}. Notice that $\nu(dt,x,y)$ is also symmetric in $x,y$.

We can take the limit as $\lambda \downarrow 0$ in \eqref{eq:4.1}, to show that for $x\neq y$
$$
\gbO(x,y)=\lim\limits_{\lambda \downarrow 0} u(\lambda,x,y)=\int \nu(dt,x,y)=\nu(\RR^+,x,y)
$$
which gives the desired result. That is, for $x\neq y$ the measure $\nu(dt,x,y)$ is a finite measure.
Also we conclude that for $x\neq y$ the limit
$$
\lim\limits_{t\to \infty} P_t(\gbO(y,\bullet))(x)=0.
$$
On the other hand, for any $f\in \C_b(\O)^+$ we have
$$
U^\lambda(f)(x)=\int_0^\infty e^{-\lambda t} P_t(f)(x) \, dt=\int_\O \int_0^\infty e^{-\lambda t} p(t,x,y) \, dt \, f(y)\, dy.
$$
This implies that for all $x, \lambda\ge 0$ 
and for $dy$-a.e. it holds 
$$
u(\lambda, x, y)=\int_0^\infty e^{-\lambda t} p(t,x,y) \, dt.
$$
Integrating by parts the RHS we get
$$
u(\lambda, x, y)=\lambda\int_0^\infty e^{-\lambda t} \int_0^t p(s,x,y)\, ds \, dt.
$$
Comparing with \eqref{eq:5.2}, we get that $dt$-a.s. it holds
$$
\gbO(x,y)-P_t(\gbO(x,\bullet))(y)=\int_0^t p(s,x,y)\, ds.
$$
Since the left side is monotone in $t$ and the right side is continuous in $t$, we conclude they are
equal for all $t$. So, in this situation the measure $\nu(dt,x,y)$ is absolutely continuous and its derivative
is $p(t,x,y)$ (of course $dt$-a.s.). In particular, for all $x, \lambda$ and $dy$-a.e., we have
$$
\begin{array}{l}
\gbO(x,y)=\int_0^\infty p(t,x,y) \, dt,\\
\\
u(\lambda,x,y)=\int_0^\infty e^{-\lambda t} p(t,x,y) \, dt.
\end{array}
$$

\noindent $(7)$. Notice that 
$$
\begin{array}{ll}
\int P_t(\gbO(\bullet,x))(y) f(y) dy&\hspace{-0.3cm}=\int P_t(\gbO(\bullet,y))(x) f(y) dy=P_t(\GbO(f))(x)\\
\\
&\hspace{-0.3cm}=\int_t^\infty P_s(f)(x) \, ds.
\end{array}
$$
The function $t\to P_t(f)(x)$ is continuous, so $P_t(\GbO(f))(x)$ is $\C^1([0,\infty))$.

To get the uniform convergence we have
$$
P_h(G(f))(y)-P_0(G(f))(y)=-\int_0^h P_s(f)(y) dy.
$$
Since $P_s(f)$ converges uniformly to $f$, as $s\downarrow 0$, we get the result.
\end{proof}

\begin{appendix}
\section{Some Matrix results.}
\label{app:1}

In this appendix, we discuss the results for potential matrices associated to finite state 
Markov Chains (or continuous time MC) used in this article.

\begin{definition} A nonnegative matrix $U$, indexed by finite set $E$, is called a potential if it 
satisfies the Complete Maximum Principle (CMP): 
for all $v\in \RR^E$ if it is verified that $(Uv)_j\le 1$ on the coordinates $j$ where $v_j\ge 0$, then
it holds that $(Uv)_j\le 1$ at all coordinates $j$.
\end{definition}

A nonnegative nonsingular matrix is a potential if and only if its inverse 
is a row diagonally dominant $M$-matrix, that is,
\begin{enumerate}
\item for all $i\neq j \in E$, we have $(U^{-1})_{ij}\le 0$ and
\item for all $i\in E$ the row sum $\sum_j (U^{-1})_{ij}\ge 0$.
\end{enumerate}
This is part of the fundamental  Choquet-Deny  paper see \cite{choquet1956} (see also Theorem 2.9 in \cite{libroDMSM2014}).

If $U$ is a nonsingular potential then $Q=-U^{-1}$ is the generator of a continuous time
transient Markov chain $X=(X_t:\, t\ge 0)$ with state space $E$, such that
$$
U_{ij}=\EE_i\left(\int_0^\infty \ind_{j}(X_t) \, dt\right),
$$
that is, $U$ is the Green potential of $X$. This property characterizes nonsingular potential matrices 
(see for example Theorem 2.27 in \cite{libroDMSM2014}). 

On the other hand, it is straightforward to show that $U^{-1}=k(\II-P)$ for some constant $k$ and a 
sub-stochastic kernel $P$, and the 
spectral radius of $P$ is strictly bounded by one. This last property is equivalent to the convergence of the series
$\sum_{n\in \NN} P^n$ and moreover $U=\frac1k \sum_{n\in \NN} P^n$. 
Notice that since $U=\frac1k \sum_{n\in \NN} P^n$, we get that $P$ is irreducible if and only if $U>0$.
On the other hand, if we can choose $k=1$, then $U$ is the potential
of a Markov chain, whose transition kernel is exactly $P$.

The following result is a characterization of potential matrices, which is crucial for our work. 
Here we denote by $\langle\, ,\, \rangle$ the euclidian inner product in $\RR^E$.

\begin{proposition} 
\label{pro:3}
Assume that $~U$ is an entrywise nonnegative matrix indexed by a finite set $E$. If
\begin{equation}
\label{eq:3}
\forall v\in \RR^E:\,\, \langle (Uv-1)^+, v \rangle=\sum\limits_{j\in E} \left(\sum\limits_{k\in E} U_{jk} v_k-1\right)^+ v_j\ge 0,
\end{equation}
then $U$ is a potential. 

Conversely, if $U, U^t$ are potential matrices (for example if $U$ is a symmetric potential) then (\ref{eq:3}) holds.
\end{proposition}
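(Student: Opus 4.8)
The plan is to prove the two implications separately; the forward one is essentially formal, and the real content is in the converse.

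\emph{Forward direction.} Assume $U\ge 0$ satisfies \eqref{eq:3}, and let $v\in\RR^E$ be such that $(Uv)_j\le 1$ at every coordinate $j$ with $v_j\ge 0$. At such coordinates $\big((Uv)_j-1\big)^+=0$, so in $\langle (Uv-1)^+,v\rangle=\sum_j\big((Uv)_j-1\big)^+v_j$ only the coordinates with $v_j<0$ contribute, and each of those terms is $\le 0$; hence $\langle (Uv-1)^+,v\rangle\le 0$. Together with \eqref{eq:3} this forces $\langle (Uv-1)^+,v\rangle=0$, so $\big((Uv)_j-1\big)^+v_j=0$ at every $j$ with $v_j<0$, i.e. $(Uv)_j\le 1$ there as well. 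Thus $U$ satisfies the CMP, so $U$ is a potential.

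\emph{Converse, nonsingular case.} Let $U$ be a nonsingular potential with $U^t$ also a potential. As recalled above, $U^{-1}=c(\II-P)$ with $c>0$ and $P$ a substochastic kernel, so $P\ge 0$ entrywise and $P\ind\le\ind$; applying this to $(U^t)^{-1}=c(\II-P^t)$ shows $P^t\ind\le\ind$ as well, i.e. $P$ is doubly substochastic. Fix $v\in\RR^E$, set $f=Uv$ (so $v=c(\II-P)f$) and write $f=\ind+g-h$ with $g=(f-1)^+\ge 0$, $h=(1-f)^+\ge 0$ and $g_ih_i=0$ for all $i$; note $(Uv-1)^+=g$. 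Using $\langle g,h\rangle=0$, $P\ind\le\ind$, and $g,h\ge 0$, $P\ge 0$,
\begin{align*}
\langle (Uv-1)^+,v\rangle &= c\big(\langle g,f\rangle-\langle g,Pf\rangle\big)\\
&= c\big(\langle g,\ind\rangle+\langle g,g\rangle-\langle g,P\ind\rangle-\langle g,Pg\rangle+\langle g,Ph\rangle\big)\\
&\ge c\big(\langle g,g\rangle-\langle g,Pg\rangle\big)= c\,\big\langle g,\big(\II-\tfrac{P+P^t}{2}\big)g\big\rangle .
\end{align*}
Finally $\widetilde P:=\tfrac12(P+P^t)$ is symmetric, entrywise nonnegative and has all row sums $\le 1$; by Gershgorin's theorem all its eigenvalues are $\le 1$ (they are real since $\widetilde P$ is symmetric), hence $\langle g,\widetilde Pg\rangle\le\langle g,g\rangle$ and the right-hand side above is $\ge 0$. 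This proves \eqref{eq:3} in the nonsingular case, and in particular whenever $U$ is a nonsingular symmetric potential (for which $P=P^t=\widetilde P$).

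\emph{The general case.} The one point I expect to require extra care is allowing $U$ (and $U^t$) to be singular. Here the plan is to approximate $U$ by nonsingular potentials $U_n$ with $U_n^t$ also potentials — available from the structure theory of $M$-matrices and potential matrices recalled in this Appendix and in \cite{libroDMSM2014} — and to pass to the limit in \eqref{eq:3}, which is legitimate because $U\mapsto\langle (Uv-1)^+,v\rangle$ is continuous for each fixed $v$. We remark that every application of Proposition~\ref{pro:3} in this paper involves a matrix that is already nonsingular and symmetric — for instance the Hadamard power $U^{(\beta)}$ in Proposition~\ref{pro:2} is nonsingular by Proposition~\ref{pro:4} — so the nonsingular case above already suffices for our purposes.
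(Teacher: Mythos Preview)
Your proof is correct and follows essentially the same route as the paper's: the forward direction is identical, and for the converse you carry out the same key computation via $U^{-1}=c(\II-P)$ with $P$ doubly substochastic, bounding $\langle g,Pg\rangle$ through the symmetric matrix $\tfrac12(P+P^t)$ having spectral radius at most $1$. The only difference is in the treatment of the singular case: rather than an abstract approximation, the paper uses the concrete perturbation $U(a)=a\II+U$ (which is a nonsingular potential with $U(a)^t$ also a potential for every $a>0$), performs your computation for $U(a)$, and then lets $a\downarrow 0$ --- this is precisely the approximation you allude to.
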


\begin{proof} We prove that (\ref{eq:3}) is sufficient for $U$ to be a potential. 
Take a vector $v\in \RR^E$ that satisfies $\forall j,\, v_j\ge 0 \Rightarrow (Uv)_j\le 1$.
Then, we have 
$$
0\le \langle (Uv-1)^+, v \rangle= \langle (Uv-1)^+, -v^- \rangle\le 0.
$$
This implies that if $v_j<0$ then $((Uv)_j-1)^+=0$ and therefore $(Uv)_j\le 1$ proving that $U$ satisfies the 
CMP.

Conversely, assume that $U, U^t$ are potential matrices. Consider $a>0$, then the matrix $U(a)=a\II+U$
is nonsingular and satisfies the CMP. So, $M(a)=(U(a))^{-1}=k(a)(\II-P(a))$, for some 
constant $k(a)$ and a double substochastic matrix $P(a)$ (here we have used that $U, U^t$ are potentials). 
We define $\mu(a)=M(a)\ind\ge 0$ and  $\xi(a)=U(a)(v-\mu(a))=U(a)v-1$ to get
$$
\begin{array}{l}
\langle (U(a)v-1)^+, v \rangle=\langle (U(a)v-U(a)\mu)^+, v \rangle=\langle \xi^+(a), M(a)\xi(a)+\mu(a) \rangle\\
=\langle \xi^+(a),k(a) \xi(a) +\mu(a) \rangle-k(a) \langle \xi^+(a),P(a)\xi(a) \rangle\\
=k(a)\left(\langle \xi^+(a), \xi^+(a) \rangle-\langle \xi^+(a),P(a)\xi(a) \rangle\right)+\langle \xi^+(a),\mu(a) \rangle.
\end{array}
$$
Since $P(a)\ge 0$, we get 
$$
\begin{array}{ll}
\langle \xi^+(a),P(a)\xi(a) \rangle\hspace{-0.3cm}&\le \langle \xi^+(a),P(a)\xi^+(a) \rangle= 
\langle \xi^+(a),\frac12(P(a)+P^t(a))(\xi)^+(a) \rangle\\
\hspace{-0.3cm}&\le \langle \xi^+(a), \xi^+(a) \rangle.
\end{array}
$$
The last inequality holds because the nonnegative symmetric matrix $\frac12(P(a)+P^t(a))$ 
is sub-stochastic and therefore its spectral radius is smaller than 1, which
implies that for all $z\in \RR^d$ it holds $ \langle z,\frac12(P(a)+P^t(a))z \rangle\le \langle z, z \rangle$. We get
$$
\langle (U(a)v-1)^+, v \rangle\ge \langle (U(a)v-1)^+,\mu(a) \rangle\ge 0.
$$
The result follows by taking $a\downarrow 0$.
\end{proof}
The next result is the analogous of Theorem \ref{the:1} for matrices 
and it can be found in Theorem 6.5 in \cite{libroDMSM2014} (see also \cite{dell2009}). 
Recall that given a matrix $A$ and a real function $F$, the $F$-Hadamard 
function  of $A$ is defined entrywise as $(F(A))_{ij}=F(A_{ij})$. When $F(x)=x^\beta$ is
a power function, we denote $A^{(\beta)}=F(A)$.

\begin{proposition} 
\label{pro:4}
Assume that $~U$ is a (nonsingular) potential matrix. Then, 
\begin{enumerate}
\item if $\beta\ge 1$, the Hadamard power $U^{(\beta)}$ is also a (nonsingular) potential matrix.
\item For all $\alpha>0$ the Hadamard exponential $\exp(\alpha U)$ is also a (nonsingular) potential matrix.
\end{enumerate}
\end{proposition}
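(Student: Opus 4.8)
The plan is to establish part (1) by induction on the cardinality of $E$, exploiting the one-point-extension structure of potential matrices, and then to derive part (2) from part (1) by a limiting argument. The induction and the reduction both rest on the Choquet--Deny characterization quoted above (a nonnegative nonsingular matrix is a potential iff its inverse is a row diagonally dominant $M$-matrix) and on the sufficiency direction of Proposition \ref{pro:3}.

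\emph{Reducing (2) to (1).} First I would check that $J+cU$ is a nonsingular potential whenever $U$ is and $c>0$, where $J$ is the all-ones matrix indexed by $E$. This is a Sherman--Morrison computation: writing $r:=U^{-1}\ind\ge 0$ for the vector of row sums of the $M$-matrix $U^{-1}$, one gets
\[
(J+cU)^{-1}=\tfrac1c\,U^{-1}-\tfrac{1}{c\,(c+\ind^{t}r)}\,r\,r^{t},
\]
whose off-diagonal entries are $\le 0$ (each term is) and whose $i$-th row sum equals $r_i/(c+\ind^{t}r)\ge 0$; so $(J+cU)^{-1}$ is a row diagonally dominant $M$-matrix and Choquet--Deny applies. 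Consequently, by (1), $\bigl(J+\tfrac{\alpha}{N}U\bigr)^{(N)}$ is a nonsingular potential for every integer $N\ge 1$, and its $(i,j)$-entry $\bigl(1+\tfrac{\alpha}{N}U_{ij}\bigr)^{N}$ converges to $e^{\alpha U_{ij}}$ as $N\to\infty$. Since the set of potential matrices is closed under entrywise convergence, $\exp(\alpha U)=\lim_{N}\bigl(J+\tfrac{\alpha}{N}U\bigr)^{(N)}$ is a potential; its nonsingularity follows by writing $\exp(\alpha U)=J+G$ with $G=\bigl(e^{\alpha U_{ij}}-1\bigr)_{ij}$ a nonsingular potential (the map $a\mapsto e^{\alpha a}-1$ preserves nonsingular potentials, cf.\ the Introduction) and reapplying the Sherman--Morrison identity to $J+G$. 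In the symmetric case, which is the only one used in this paper, one may bypass closedness of the cone and instead pass to the limit directly in the inequality of Proposition \ref{pro:3}, which is continuous in the matrix entrywise.

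\emph{The induction for (1).} The case $|E|=1$ is trivial, and a singular potential is handled by approximating it with nonsingular ones, using continuity of $V\mapsto V^{(\beta)}$ and closedness of the cone; so assume $U$ is a nonsingular potential on $E$ with $|E|=m\ge 2$. I single out a state $e_0$, put $E'=E\setminus\{e_0\}$, and use the one-point description of $U$ in terms of its restriction to $E'$: the principal submatrix $A:=U|_{E'\times E'}$ is a nonsingular potential on $E'$ (it is the Green matrix of the trace chain on $E'$), the killed Green matrix $\widehat U=\bigl(\EE_i[\#\{\text{visits to }j\text{ before hitting }e_0\}]\bigr)_{i,j\in E'}$ is a nonsingular potential on $E'$, and $A=\widehat U+pb^{t}$, where $p\in[0,1]^{E'}$ is the vector of hitting probabilities of $e_0$ and $b\in\RR_{\ge 0}^{E'}$ is the $e_0$-row of $U$ (in the symmetric case $b=U_{e_0e_0}\,p$, so $A=\widehat U+U_{e_0e_0}\,pp^{t}$). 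Taking Hadamard $\beta$-powers block by block, $U^{(\beta)}$ is again of one-point-extension shape, with corner entry $U_{e_0e_0}^{\beta}>0$, with $e_0$-column $p^{(\beta)}U_{e_0e_0}^{\beta}$ where $p^{(\beta)}\in[0,1]^{E'}$ --- here the hypothesis $\beta\ge 1$ enters essentially, since $p_i\in[0,1]$ forces $p_i^{\beta}\le p_i\le 1$ --- and with $E'$-block $A^{(\beta)}$. By the reconstruction of a potential from its one-point data, proving that $U^{(\beta)}$ is a nonsingular potential thus reduces to proving that the corrected matrix
\[
W:=A^{(\beta)}-\bigl(pb^{t}\bigr)^{(\beta)}=A^{(\beta)}-p^{(\beta)}\bigl(b^{(\beta)}\bigr)^{t}
\]
is a nonsingular potential on $E'$, which is an $(m-1)$-dimensional statement.

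\emph{The main obstacle.} The heart of the matter is therefore the rank-one correction lemma: if $A$ is a nonsingular potential on a finite set, $R=pb^{t}$ is a nonnegative rank-one matrix with $R\le A$ entrywise and $p\in[0,1]$, then $A^{(\beta)}-R^{(\beta)}$ is a nonsingular potential for every $\beta\ge 1$. This is the technical core of Theorem 6.5 in \cite{libroDMSM2014} (see also \cite{dell2009}); I would attack it by a further induction, again peeling off one coordinate, and by verifying the inequality characterization of Proposition \ref{pro:3} for the perturbed matrix, the constraints $R\ge 0$ and $p\in[0,1]$ being exactly what keeps the arising error terms of the right sign. Granting this lemma, the induction on $|E|$ closes, giving (1) for nonsingular and hence for all potentials, and then (2) follows as above.
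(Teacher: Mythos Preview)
The paper does not supply a proof of this proposition: it is stated in the Appendix with the sentence ``The next result \ldots\ can be found in Theorem 6.5 in \cite{libroDMSM2014} (see also \cite{dell2009})'' and nothing more. So there is no in-paper argument to compare against; your sketch is in fact a reconstruction of the method in those references, and you say so yourself when you name the rank-one correction lemma as ``the technical core of Theorem 6.5 in \cite{libroDMSM2014}.'' The inductive peeling of one state, the identification $A=\widehat U+pb^{t}$, and the reduction of the $\beta$-power of the full matrix to the statement that $A^{(\beta)}-(pb^{t})^{(\beta)}$ is again a potential on the smaller index set are exactly the skeleton of the cited proof, so on part (1) your outline is faithful to the source the paper defers to.

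Two remarks. First, in your reduction of (2) to (1) the Sherman--Morrison display is written with $rr^{t}$, which is only the correct rank-one term when $U$ is symmetric; in general $(J+cU)^{-1}=\tfrac1c U^{-1}-\tfrac{1}{c(c+\ind^{t}U^{-1}\ind)}(U^{-1}\ind)(\ind^{t}U^{-1})$, and the sign of the column sums $\ind^{t}U^{-1}$ is not controlled by the hypothesis that $U$ alone is a potential. Your parenthetical ``in the symmetric case, which is the only one used in this paper'' is the honest fix, and indeed every invocation of Proposition \ref{pro:4} in the body of the paper is for symmetric $U$ (Lemma \ref{lem:2} produces symmetric potentials). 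Second, you explicitly leave the rank-one correction lemma unproved and point to \cite{libroDMSM2014,dell2009}; since that lemma \emph{is} the proof, your write-up is really a reduction-plus-citation, which matches precisely what the paper itself does.
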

\end{appendix}

\section*{Acknowledgement} The authors are thankful to Martin Barlow, who pointed out the results in 
\cite{FukushimaOshimaTakeda2011} for the existence of density in the context of symmetric Markov Processes.
S.M., J.S.M. and P.V. were supported in part by CONICYT BASAL  AFB170001. M.D. was supported 
in part by Milenio NC120062.

\end{document}